\title{Existence d'un feuilletage positivement transverse à un homéomorphisme de surface}
\author{Olivier \textsc{Jaulent}\footnote{Lycée François I$^{\text{er}}$, 11 rue Victor Hugo 77\,300 Fontainebleau, \texttt{Prenom.Nom@normalesup.org}}}
\theoremstyle{plain}
\newtheorem{Prop}{Proposition}[section]
\newtheorem{Th}[Prop]{Théorème}
\newtheorem{Cor}[Prop]{Corollaire}
\newtheorem{Lemme}[Prop]{Lemme}
\newtheorem*{Prop*}{Proposition}
\newtheorem*{Th*}{Théorème}
\newtheorem*{Cor*}{Corollaire}
\newtheorem*{Lemme*}{Lemme}
\newtheorem*{Dep*}{Proposition-Définition}
\newtheorem*{Conj*}{Conjecture}
\theoremstyle{definition}
\newtheorem{Def}[Prop]{Définition}
\newtheorem{PropDef}[Prop]{Proposition et définition}
\newtheorem{Rem}[Prop]{Remarque}
\newtheorem{Qu}[Prop]{Question}
\newtheorem*{Def*}{Définition}
\newtheorem*{DefProp*}{Définition et proposition}
\newtheorem*{PropDef*}{Proposition et définition}
\newtheorem*{Rem*}{Remarque}
\newtheorem*{Ex*}{Exemple}
\newtheorem*{Qu*}{Question}
\newtheorem*{Not*}{Notation}
\newcommand{\id}{\operatorname{id}}
\newcommand{\N}{\mathbb{N}}
\newcommand{\R}{\mathbb{R}}
\newcommand{\Z}{\mathbb{Z}}
\newcommand{\C}{\mathbb{C}}
\newcommand{\mc}{\mathcal}
\newcommand{\Fix}{\operatorname{Fix}}
\newcommand{\Homeo}{\operatorname{Homeo}}
\newcommand{\T}{\mathbb T}
\newcommand{\A}{\mathbb A}
\newcommand{\Cont}{\operatorname{Cont}}
\newcommand{\pprec}{\preccurlyeq}
\newcommand{\Pro}[1]{\textrm{(P#1)}}
\newcommand\ent[2]{\mbox{$\{#1,\ldots,#2\}$}}
\newcommand{\qt}[1]{\quad\text{#1}\quad}
\newcommand{\qqt}[1]{\qquad\text{#1}\qquad}
\newcommand{\fonction}[4]{\begin{tabular}{c@{~}c@{~}c}
  $#1$&$\to$&$#2$\\
  $#3$&$\mapsto$&$#4$\\
\end{tabular}}
 \newcommand{\FigInclusion}[1]{
      \ifpdf\includegraphics{#1.pdf}
        \else\includegraphics{#1.eps}
      \fi
 }
 \newcommand{\Fig}[3]{
  \begin{figure}
    \begin{center}
      \FigInclusion{#1}
    \end{center}
    \caption{#3}\label{#2}
  \end{figure}
 }
\newcommand\refbis[1]{\ref{#1}'}
\newcommand\PropBis[1]{
  \theoremstyle{plain}
  \newtheorem*{Prop#1}{Proposition~\ref{Prop:#1}'}
}
\begin{document}

\maketitle
\selectlanguage{english}

\begin{abstract}
 \textbf{Existence of a topological foliation transverse to the dynamics of a homeomorphism.}
 Let $F$ be a homeomorphism of an oriented surface $M$ that is isotopic to the identity.
 Le Calvez proved that if $F$ admits a lift $\tilde F$ without fixed points to the universal covering of $M$, then there exists a topological foliation of $M$ transverse to the dynamics.
 We generalize this result to the case where $\tilde F$ has fixed points.
 We obtain a singular topological foliation whose singularities are fixed points of $F$.

\medskip\noindent\textbf{Mathematics Subject Classification (2010) :}
37E30 

\medskip\noindent\textbf{Keywords :}
Surface homeomorphism, topological foliation transverse to the dynamics, equivariant version of Brouwer's plane translation theorem
\end{abstract}

\selectlanguage{francais}

\begin{abstract}
 Le Calvez a montré que si $F$ est un homéomorphisme isotope à l'identité d'une surface $M$ admettant un relèvement $\tilde F$ au revêtement universel n'ayant pas de points fixes, alors il existe un feuilletage topologique de $M$ transverse à la dynamique.
 Nous montrons que ce résultat se généralise au cas où $\tilde F$ admet des points fixes.
 Nous obtenons alors un feuilletage topologique singulier transverse à la dynamique dont les singularités sont  un ensemble fermé de points fixes de $F$.

\medskip\noindent\textbf{Mots-clés :}
Homéomorphisme de surface, feuilletage topologique transverse à la dynamique, version équivariante feuilletée du théorème de translation plane de Brouwer
\end{abstract}

\bigskip

Dans cet article, nous considérons une surface connexe $M$ et le groupe $\Homeo_*(M)$ des homéomorphismes de $M$ isotopes à l'identité.
Nous appelons surface une variété topologique de dimension deux sans bord, orientable, séparée et dénombrable à l'infini.
Le résultat principal est le suivant :

\begin{Th} \label{Th:princ}
 Soient $F \in \Homeo_*(M)$ et $I = (F_t)_{t \in [0,1]}$ une isotopie entre l'identité et $F$.
 Alors, il existe un ensemble fermé $X$ de points fixes de $F$ et
 un relèvement $\widehat F$ de la restriction $F_{|M \setminus X}$ au revêtement universel $\widehat\pi_X : \widehat N_X \rightarrow M \setminus X$ de $M \setminus X$ vérifiant :
 \begin{enumerate}
  \item
  $\widehat F$ commute avec les automorphismes du revêtement universel et est sans point fixe ;
  \item
  tout chemin de $\widehat N_X$ joignant un point $\widehat x \in \widehat N_X$ à son image $\widehat F(\widehat x)$ se projette sur $M$ en un chemin homotope à extrémités fixées au chemin $I(x)$ décrit par le projeté $x = \widehat\pi_X(\widehat x)$ le long de l'isotopie $I$ ;
  
  \item
  pour tout sous-ensemble fermé $Y \subset X$, il existe un relèvement $\widehat F_Y$ de la restriction $F_{|M \setminus Y}$ au revêtement universel $\widehat\pi_Y : \widehat N_Y \rightarrow M \setminus Y$ de $M \setminus Y$ vérifiant :
 \begin{enumerate}
  \item
  $\widehat F_Y$ commute avec les automorphismes du revêtement universel de $M \setminus Y$ et fixe les antécédents par $\widehat \pi_Y$ des points de $X \setminus Y$ ;
  \item
  tout chemin de $\widehat N_X$ joignant un point $\widehat x \in \widehat N_X$ à son image  $\widehat F(\widehat x)$ se projette sur $M$ en un chemin dont tout relèvement à $\widehat N_Y$ issu d'un point $\widehat y$ a pour extrémité $\widehat F_Y(\widehat y)$.
 \end{enumerate}
 \end{enumerate}
\end{Th}

Justifions la présence des différentes conditions qui apparaissent dans cet énoncé.
On notera $\Fix(I)$ l'ensemble des points fixes le long d'une isotopie $I$.
Une idée pour obtenir le relèvement $\widehat F$ est de construire une isotopie $I = (F_t)_{t \in [0,1]}$ entre l'identité et $F$ et de considérer $X = \Fix(I)$.
En effet, l'isotopie restreinte $\bigl((F_t)_{|M\setminus X}\bigr)_{t \in [0,1]}$ se relève au revêtement universel de $M \setminus X$ en une isotopie $(\widehat F_t)_{t \in [0,1]}$ issue de l'identité de temps un le relèvement $\widehat F = \widehat F_1$ de $F_{|M \setminus X}$.
Il est facile de vérifier que $\widehat F$ commute avec les automorphismes de revêtement et vérifie les assertions 2 et 3 du théorème~\ref{Th:princ}.

Il reste à vérifier la principale propriété : on souhaite que $\widehat F$ soit sans point fixe.
La proposition \ref{PropTh:XyNonEnlace} assure que c'est le cas si l'isotopie $I$ est maximale pour la relation d'ordre suivante.
Si $I$ et $J$ sont deux isotopies entre l'identité et $F$, on dit que l'on a $I \leq J$ si $\Fix(I)$ est contenu dans $\Fix(J)$ et si $I$ et $J$ sont homotopes relativement à $\Fix(I)$.
Malheureusement nous ne savons pas répondre à la question suivante.

\begin{Qu}
Existe-t-il une isotopie maximale pour la relation d'ordre $\leq$ ?
\end{Qu}

Nous avons donc laissé de côté les isotopies définies sur $M$ au profit de la notion plus faible suivante.
On dit qu'un sous-ensemble fermé $X \subset M$ est \textit{non enlacé} s'il existe un relèvement $\widehat F$ de la restriction $F_{|M\setminus X}$ au revêtement universel $\widehat\pi_X : \widehat N_X \to M \setminus X$ de $M \setminus X$ qui commute avec les automorphismes de revêtement.
Nous donnerons dans le paragraphe \ref{ssec:defenlac} plus de détails sur cette définition.
Nous introduirons ensuite une relation d'ordre $\pprec$ (définition \ref{Def:RO3}) sur les ensembles non enlacés qui permettra d'obtenir la seconde assertion de notre énoncé.
Nous montrerons qu'il existe des ensembles non enlacés maximaux pour cette relation d'ordre (proposition \refbis{Prop:existmax}) et que les relèvements associés sont sans point fixe (proposition \refbis{Prop:MaximaFaible}) ce qui complète la première assertion.

Justifions maintenant la présence de la troisième.
Dans certaines situations où $X$ sépare $M$, il peut apparaître des composantes connexes de $M \setminus X$ homéomorphes à l'anneau $(\R/\Z)\times \R$.
Lorsque cela se produit, il n'y a pas unicité d'un relèvement $\widehat F$ qui commute avec les automorphismes de revêtement.
Or en général l'un de ces relèvements présente des propriétés supplémentaires et la troisième assertion du théorème permet de le caractériser.
Notre preuve consiste à rechercher un ensemble non enlacé $X$ maximal pour $\pprec$ et il semble raisonnable d'imposer qu'il soit plus grand (pour $\pprec$) que ses sous-ensembles fermés, ce qui correspond à la troisième assertion.
Il se trouve que cette condition suffit pour assurer l'unicité du relèvement.
En effet, certaines parties fermées $Y \subset X$ ne sépareront pas~$M$.
On obtiendra alors l'unicité d'un relèvement $\widehat F_Y$ de $F_{|M\setminus Y}$ puis l'unicité d'un relèvement $\widehat F$ de $F_{|M \setminus X}$ plus grand que $\widehat F_Y$ pour $\pprec$.
Nous donnons un exemple explicite dans la partie \ref{sec:pres}.

\begin{Rem}
 Un théorème d'Epstein (voir paragraphe \ref{ssec:defenlac} et \cite{Epstein66}) permet de reformuler le théorème \ref{Th:princ} en terme d'isotopie sur $M \setminus X$.
 Il affirme en effet que l'existence de $\widehat F$ entraîne celle d'une isotopie sur $M \setminus X$ entre l'identité de $M \setminus X$ et la restriction $F_{|M\setminus X}$ dont le relèvement au revêtement universel $\widehat N_X$ de $M \setminus X$ issu de l'identité admet $\widehat F$ pour temps un.
\end{Rem}

L'un des principaux intérêts du théorème \ref{Th:princ} est que l'on peut en déduire l'existence d'un feuilletage positivement transverse à un homéomorphisme de surface (corollaire \ref{Cor:princ}).
Nous allons maintenant expliquer comment.
La preuve repose sur la version équivariante feuilletée suivante du théorème de Brouwer, un résultat difficile démontré par Le Calvez.

\begin{Th}[Le Calvez \cite{LC05}] \label{Th:Brouwer}
 Soit $\widehat G$ un groupe discret d'homéomorphismes préservant l'orientation, agissant librement et proprement sur le plan $\R^2$.
 Soit $\widehat F$ un homéomorphisme de $\R^2$ préservant l'orientation et sans point fixe qui commute avec tous les éléments de $\widehat G$.
 Alors il existe un feuilletage topologique $\widehat{\mc F}$ de $\R^2$ par des droites de Brouwer
de~$\widehat F$ tel que $\widehat{\mc F}$ soit invariant par tout élément de $\widehat G$.
\end{Th}

Une \textit{droite topologique orientée} est un plongement topologique propre $\phi$ de la droite réelle orientée $\R$ ; elle sépare $\R^2$ en deux composantes connexes.
On dit qu'un tel plongement $\phi$ est une \textit{droite de Brouwer} de $\widehat F$ si $\widehat F(\phi)$ est contenu dans la composante connexe de droite et $\widehat F^{-1}(\phi)$ dans celle de gauche.

On déduit de ce résultat que sous les hypothèses du théorème \ref{Th:princ} il existe un feuilletage topologique $\widehat{\mc F}$ de $\widehat N_X$ par des droites de Brouwer de $\widehat F$, tel que $\widehat{\mc F}$ soit invariant par les automorphismes du revêtement $\widehat\pi_X : \widehat N_X \rightarrow N_X$.
Le feuilletage $\widehat{\mc F}$ passe donc au quotient en un feuilletage topologique $\mc F$ de~$M$.
Ce feuilletage est dit \textit{positivement transverse} à $\widehat F$.
Expliquons ce que cela signifie avant d'énoncer le corollaire.

\begin{Def}[Chemin transverse à un feuilletage]
 Soit $M$ une surface et $\mc F$ un feuilletage orienté sur $M$.
 On dit qu'un chemin $\gamma : [0,1] \rightarrow\nobreak M$ est \textit{négativement transverse au feuilletage $\mc F$} si pour tout $t_0 \in ]0,1[$, il existe un intervalle ouvert $I_0 \subset ]0,1[$ contenant $t_0$, un voisinage $V$ de $\gamma(t_0)$ dans~$M$ et un homéomorphisme $\varphi : V \rightarrow ]-1,1[^2$ qui envoie $\gamma(t_0)$ sur le point $(0,0)$, l'ensemble $V \cap \gamma\bigl(I_0\bigr)$ sur l'arc horizontal $]-1,1[ \times \{0\}$ orienté vers la droite et le feuilletage $\mc F$ sur les droites verticales orientées vers le haut.
\end{Def}

\begin{Rem}
 Si $X \subset \Fix(F)$ est un ensemble fermé de points fixes, un feuilletage sur $M \setminus X$ peut être vu comme un feuilletage topologique singulier sur $M$ dont les singularités sont les points de $X$.
 On dira qu'un chemin $\gamma : [0,1] \to M \setminus X$ est transverse au feuilletage singulier sur $M$ s'il est transverse au feuilletage sur $M \setminus X$.
\end{Rem}

\begin{Def}[Feuilletage transverse à la dynamique] \label{Def:postrans}
 Soit $F$ un homéomorphisme d'une surface $M$, admettant un relèvement $\widetilde F$ au revêtement universel $\widetilde M$ de $M$ qui commute avec les automorphismes de revêtement.
 Soit $\mc F$ un feuilletage singulier sur $M$ dont les singularités sont des  points fixes de $F$.
 Le feuilletage $\mc F$ est dit \textit{positivement transverse à $\widetilde F$} si pour tout point $z \in M \setminus X$, il existe un chemin $\gamma_z$ de $z$ à $F(z)$ dans $M \setminus X$, négativement transverse au feuilletage $\mc F$ et \textit{associé à $\widetilde F$},
  c'est-à-dire se relevant à $\widetilde M$ en un chemin joignant son origine $\widetilde z$ à $\widetilde F(\widetilde z)$ (voir définition~\ref{Def:Associe}).
\end{Def}

\begin{Cor} \label{Cor:princ}
 Soient $F \in \Homeo_*(M)$ et $\widetilde F$ un relèvement à $\widetilde M$ qui commute avec les automorphismes de revêtement.
 Alors, il existe
  un sous-ensemble fermé $X \subset \Fix(F)$
  et un feuilletage topologique $\mc F$ de $M$, dont $X$ est l'ensemble des singularités, qui soit positivement transverse à~$\widetilde F$.
\end{Cor}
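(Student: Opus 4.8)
The strategy is to feed the data of Corollary \ref{Cor:princ} into Theorem \ref{Th:princ}, lift the situation via Le Calvez's theorem (Theorem \ref{Th:Brouwer}), and push the resulting foliation back down to $M$. First I would recall that a lift $\widetilde F$ to $\widetilde M$ commuting with the deck group determines an isotopy class: indeed, $\widetilde F$ is isotopic to the identity through lifts, and this defines an isotopy $I = (F_t)_{t\in[0,1]}$ between $\id_M$ and $F$ whose associated lift of the time-one map is $\widetilde F$. Applying Theorem \ref{Th:princ} to $I$ yields a closed set $X \subset \Fix(F)$, the universal cover $\widehat\pi_X : \widehat N_X \to M\setminus X$, and a lift $\widehat F$ of $F_{|M\setminus X}$ that commutes with the deck transformations and has no fixed point. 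Since $M\setminus X$ is an open surface with a nontrivial open subset (we may assume $X\neq M$, else there is nothing to foliate and the statement is vacuous after checking $M$ is a single point or handled trivially), its universal cover $\widehat N_X$ is homeomorphic to the plane $\R^2$ when $M\setminus X$ is not already the sphere — and since $M\setminus X$ is an open surface it is never the sphere, so $\widehat N_X \cong \R^2$.

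Next I would invoke Theorem \ref{Th:Brouwer}. Take $\widehat G$ to be the group of deck transformations of $\widehat\pi_X$, acting on $\widehat N_X \cong \R^2$; this action is discrete, free, proper, and orientation-preserving (the latter since $M$, hence $M\setminus X$, is oriented). The map $\widehat F$ is an orientation-preserving fixed-point-free homeomorphism of $\R^2$ commuting with every element of $\widehat G$. Theorem \ref{Th:Brouwer} then produces a topological foliation $\widehat{\mc F}$ of $\widehat N_X$ by Brouwer lines of $\widehat F$, invariant under $\widehat G$. Being $\widehat G$-invariant, $\widehat{\mc F}$ descends to a topological foliation $\mc F$ of $M\setminus X$, which we regard as a singular foliation on $M$ with singular set exactly $X$.

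It remains to check that $\mc F$ is positively transverse to $\widetilde F$ in the sense of Definition \ref{Def:postrans}. Fix $z \in M\setminus X$ and a lift $\widehat z \in \widehat N_X$. The leaf $\widehat\phi$ of $\widehat{\mc F}$ through $\widehat z$ is a Brouwer line, so $\widehat F(\widehat\phi)$ lies strictly to its right; a standard argument (crossing the foliation once, staying in the plane, using that consecutive leaves separate) produces a path $\widehat\gamma$ from $\widehat z$ to $\widehat F(\widehat z)$ in $\widehat N_X$ that is negatively transverse to $\widehat{\mc F}$. Its projection $\gamma_z = \widehat\pi_X\circ\widehat\gamma$ is a path from $z$ to $F(z)$ in $M\setminus X$, negatively transverse to $\mc F$. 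Finally, assertion 2 of Theorem \ref{Th:princ} says precisely that the projection to $M$ of any path joining $\widehat x$ to $\widehat F(\widehat x)$ in $\widehat N_X$ is homotopic rel endpoints to the path $I(x)$ traced by $x$ along $I$; hence $\gamma_z$ lifts to $\widetilde M$ as a path from $\widetilde z$ to $\widetilde F(\widetilde z)$, i.e.\ $\gamma_z$ is associated to $\widetilde F$ in the sense of Definition \ref{Def:Associe}. This establishes positive transversality and completes the proof.

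The main obstacle is the construction of the transverse path $\widehat\gamma$ from the single Brouwer line $\widehat\phi$ and the verification that its projection is the right homotopy class: the first is a delicate but essentially local-to-global argument about topological foliations of the plane (one must genuinely cross $\widehat\phi$ and then travel within one side without re-crossing, reaching $\widehat F(\widehat z)$), and the second is exactly where assertion 2 of Theorem \ref{Th:princ} — rather than mere commutation with deck transformations — is indispensable, since it is what pins down the correct lift $\widetilde F$ on $\widetilde M$ as opposed to some other lift differing by a deck transformation of $M$.
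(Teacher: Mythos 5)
Your proposal is correct and follows essentially the same route as the paper: apply Theorem \ref{Th:princ} to obtain $X$ and the fixed-point-free lift $\widehat F$, invoke Theorem \ref{Th:Brouwer} to get a deck-invariant foliation by Brouwer lines that descends to a singular foliation on $M$, and use assertion 2 of Theorem \ref{Th:princ} to transfer transversality from $\widehat F$ to $\widetilde F$. The only difference is cosmetic: where you sketch the transverse path by "crossing the Brouwer line once," the paper reproduces Le Calvez's argument via the open set $\widehat W$ of endpoints of positively transverse arcs, but both ultimately defer to \cite{LC05} for that step.
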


\begin{proof}
 On considère le sous-ensemble fermé $X \subset \Fix(F)$ et le relèvement $\widehat F$ de $F_{|M\setminus X}$ donnés par le théorème \ref{Th:princ}.
 On vient de voir que le théorème \ref{Th:Brouwer} assure alors l'existence d'un feuilletage topologique $\widehat{\mc F}$ de $\widehat N_X$ par des droites de Brouwer de $\widehat F$ qui passe au quotient en un feuilletage topologique singulier $\mc F$ de $M$, de singularités $X$.

Il reste à prouver que $\mc F$ est transverse à~$\widetilde F$ ce qui, d'après la deuxième assertion du théorème~\ref{Th:princ}, revient à prouver que $\widehat{\mc F}$ est transverse à~$\widehat F$.
Résumons la preuve donnée dans \cite{LC05} page 4 : considérons un point $\widehat z_0 \in \widehat N_X$ et notons $\widehat W$ l'ensemble des points $z \in \widehat N_X$ extrémité d'un arc issu de $\widehat z_0$ et positivement transverse à $\widehat F$.
On remarque alors que la partie $\widehat W$, ouverte et \og située à droite \fg{} des feuilles qui constituent sa frontière, est le demi-plan topologique ouvert \og situé à droite \fg{} de la droite de Brouwer passant par $\widehat z_0$.
Par définition d'une droite de Brouwer, $\widehat W$ contient donc $\widehat F(\widehat z_0)$.
\end{proof}

Expliquons l'intérêt de ce résultat.
Dans \cite{LCDuke06}, Le Calvez démontre l'existence de points périodiques de périodes arbitrairement grandes pour tout homéomorphisme hamiltonien non trivial d'une surface compacte orientée de genre $g \geq 1$.
Dans sa preuve, il est amené à envisager deux cas selon que l'ensemble des points fixes peut jouer le rôle de $X$ dans le théorème \ref{Th:princ} ou non.
Or les arguments utilisés dans le premier cas s'appliquent mot pour mot en remplaçant l'ensemble des points fixes par un sous-ensemble $X$ donné par le théorème \ref{Th:princ}.
L'étude du second cas n'est donc plus nécessaire.

On peut espérer que le théorème \ref{Th:princ} et son corollaire \ref{Cor:princ} seront d'une grande utilité dans l'étude des homéomorphismes de surface.

\bigskip

Dans une première partie, nous allons rappeler des définitions et résultats classiques puis introduire des notions d'enlacement et de chemins adaptés, moins classiques et plus techniques, qui faciliteront les démonstrations des parties suivantes.
Nous démontrerons ensuite le théorème \ref{Th:princ} dans la partie~\ref{sec:pres} en nous appuyant sur les deux résultats suivants :
\begin{enumerate}
 \item
 l'existence d'un relèvement maximal dans $(\mc R,\pprec)$, ensemble des relèvements qui commutent avec les automorphismes de revêtement (voir les définitions \ref{Def:QuiCommute} et \ref{Def:RO3}).
 Il s'agit de la proposition \ref{Prop:existmax}, un des principaux résultats de l'article, qui sera prouvée dans la partie~\ref{sec:induc}.
 La démonstration de ce résultat qui repose sur le lemme de Zorn concentre l'essentiel des difficultés de l'article.
 \item
 la possibilité de modifier une isotopie admettant un point fixe contractile en une isotopie fixant ce point (proposition \ref{PropTh:XyNonEnlace}).
 Ce résultat plus simple et classique sera démontré dans la partie \ref{sec:contrac}.
 Nous avons choisi d'en donner une preuve détaillée dans la mesure où nous n'en connaissons pas de démonstration complète dans la littérature.
\end{enumerate}

Une partie de ce travail a été faite au Laboratoire d'Analyse Géométrie et Applications, à l'université Paris 13.
Je remercie très chaleureusement Patrice Le Calvez ; les nombreuses discussions que nous avons eues m'ont guidé pour introduire les outils utilisés et ses précieuses remarques ont largement contribué à clarifier les preuves.
Je remercie particulièrement François Béguin qui m'a suggéré d'ajouter la troisième condition du théorème principal en vue des applications, le rapporteur dont les nombreux commentaires ont permis de préciser le texte ainsi que Marc Bonino, Sylvain Crovisier et Frédéric Le Roux.

\section{Généralités}

\subsection{Notations}

\paragraph{Chemin, lacet.}
Soit $E$ un espace topologique.
On désigne par \textit{chemin} toute classe d'application continue $\gamma : [0,1] \rightarrow E$ modulo un reparamétrage préservant l'orientation ; le point $\gamma(0)$ est l'origine et le point $\gamma(1)$ l'extrémité.
Nous noterons $\gamma^-$ le chemin obtenu en changeant l'orientation de $\gamma$.
La \textit{concaténation} $\alpha.\beta$ de deux chemins $\alpha$ et $\beta$ vérifiant $\alpha(1) = \beta(0)$ est le chemin $\gamma$ représenté par le paramétrage suivant :
\[
 \gamma(t) =
 \begin{cases}
  \alpha(2t) &\text{si $0 \leq t \leq \frac12$ ;} \\
  \beta(2t-1) &\text{si $\frac12 \leq t \leq 1$.}
 \end{cases}
\]

Un \textit{lacet} est un chemin dont l'extrémité $z$ est égale à l'origine ; on dit alors qu'il est \textit{basé en $z$}.
En notant $\T = \R/\Z$, on peut représenter un lacet par une application continue $\gamma : \T \rightarrow E$.
On dit que deux lacets $\alpha$ et $\beta$ sont \textit{librement homotopes} s'il existe une application continue $\phi : [0,1] \times \T \rightarrow E$ vérifiant pour tout $t \in \T$, $\phi(0,t) = \alpha(t)$ et $\phi(1,t) = \beta(t)$.
L'application $\phi$ est appelée \textit{homotopie libre} entre $\alpha$ et $\beta$.
Pour tout $t_0 \in \T$, en posant $z = \phi(0,t_0)$, on note $\phi(z) : s \mapsto \phi(s,t_0)$ la \textit{trajectoire} de $z$ le long de l'homotopie libre.

Deux chemins $\alpha$ et $\beta$ vérifiant $\alpha(0) = \beta(0)$ et $\alpha(1) = \beta(1)$ sont \textit{homotopes}\footnote{Cette notion d'homotopie est souvent désignée par \textit{homotopie relativement aux extrémités}.} s'il existe une application continue $\phi : [0,1]^2 \rightarrow E$ vérifiant pour tout $(s,t) \in [0,1]^2$ :
\begin{align*}
 \phi(0,t) &= \alpha(t),
 &\phi(1,t) &= \beta(t), \\
 \phi(s,0) &= \alpha(0) = \beta(0)
 &\text{et}\qquad\phi(s,1) &= \alpha(1) = \beta(1).
\end{align*}
On parlera de même d'\textit{homotopie} entre lacets de même point base lorsque ce point reste fixe le long de l'homotopie.
Un lacet basé en un point $z \in E$ est dit \textit{contractile} s'il est homotope au lacet trivial basé en $z$.

\paragraph{Relèvement d'une isotopie, homotopie entre isotopies.}
Soit $M$ une surface connexe.
Une \textit{isotopie} $I$ est un chemin $t \mapsto F_t$ de $[0,1]$ dans $\Homeo(M)$ muni de la topologie compacte ouverte pour les homéomorphismes.
Pour tout point $z \in M$, on note $I(z) : t \mapsto F_t(z)$ la \textit{trajectoire} de $z$ le long de l'isotopie.
Il s'agit d'un lacet si et seulement si $z$ est un point fixe de $F_1$.

Soit $\widetilde\pi : \widetilde M \rightarrow M$ le revêtement universel de $M$.
Une isotopie $I=(F_t)_{t \in [0,1]}$ telle que $F_0$ soit l'homéomorphisme identité $\id_M$ se relève à $\widetilde M$ en une isotopie $\widetilde I = (\widetilde F_t)_{t \in [0,1]}$ vérifiant $\widetilde F_0 = \id_{\widetilde M}$.
L'homéomorphisme $\widetilde F_1$ est appelé relèvement de $F_1$ \textit{associé} à $I$ ; nous le noterons $\widetilde F_I$.

Considérons maintenant un sous-ensemble $N \subset M$.
On dit que $I$ et $I'$ sont \textit{homotopes relativement à $N$} si l'on peut choisir l'homotopie $\phi : [0,1]^2 \rightarrow \Homeo(M)$ de sorte que la restriction $\phi(s,t)_{|N}$ soit indépendante de $s \in [0,1]$.

Nous utiliserons également la notion de \textit{composée de deux isotopies} : si $I = (F_t)_{t \in [0,1]}$ et $J = (G_t)_{t \in [0,1]}$ sont deux isotopies, on note $J \circ I$ l'isotopie définie par $J \circ I = (G_t\circ F_t)_{t \in [0,1]}$.

\paragraph{Point fixe contractile, point fixe d'une isotopie.}

Dans la suite de l'article, on se donne une surface connexe $M$ et un homéomorphisme $F$ de~$M$ isotope à l'identité.
Si $E$ est un ensemble, on note $\overline E$ son adhérence, $\partial E$ sa frontière et $\mathring E$ son intérieur.
La lettre $X$ désignera toujours un sous-ensemble fermé de l'ensemble $\Fix(F)$ des points fixes de $F$.
Nous noterons $N_X = M \setminus X$ son complémentaire et $\widehat\pi_X : \widehat N_X \rightarrow N_X$ son revêtement universel.
Soient $I = (F_t)_{t \in [0,1]}$ une isotopie entre l'identité et $F$ et $z$ un point fixe de~$F$.

\begin{Def}
 On dit que $z$ est un \textit{point fixe de $I$} s'il  vérifie, pour tout $t \in [0,1]$, $F_t(z) = z$.
 Nous noterons $\Fix(I)$ l'ensemble des points fixes d'une isotopie $I$.
\end{Def}

\begin{Def}
 On dit que $z$ est un \textit{point contractile de $I$} s'il vérifie l'une des deux propriétés suivantes :
 \begin{enumerate}
  \item le point $z$ est un point fixe de $I$ ;
  \item le point $z$ appartient à $\Fix(F) \setminus \Fix(I)$ et le lacet $I(z)$ est contractile dans $M \setminus \Fix(I)$.
 \end{enumerate}
 Nous noterons $\Cont(I)$ l'ensemble des points contractiles d'une isotopie $I$.
\end{Def}

\begin{Rem}
 Considérons $X = \Fix(I)$ et $\widehat F$ le relèvement de $F_{|N_X}$ à $\widehat N_X$ associé à la restriction de $I$ à $N_X$.
 Un point $z \in N_X$ est contractile si et seulement si tout relevé $\widehat z$ de $z$ à $\widehat N_X$ est un point fixe de $\widehat F$.
\end{Rem}

\subsection{Isotopie restreinte, relèvement}
\label{ssec:defrestr}

\begin{Def}
 On appelle \textit{isotopie restreinte} tout couple $(X,I)$ formé :
 \begin{itemize}
  \item d'un sous-ensemble fermé $X \subset \Fix(F)$ ;
  \item d'une isotopie définie sur $N_X$ entre l'identité $\id_{N_X}$ et la restriction $F_{|N_X}$ de $F$ à $N_X$.
 \end{itemize}
 On note $\mc I$ l'ensemble des isotopies restreintes.
\end{Def}

\begin{Def} \label{Def:QuiCommute}
 On note $\mc R$ l'ensemble des couples $(X,\widehat F)$ formés :
 \begin{itemize}
  \item d'un sous-ensemble fermé $X \subset \Fix(F)$ ;
  \item d'un relèvement $\widehat F$ de $F_{|N_X}$ au revêtement universel $\widehat\pi_X : \widehat N_X \rightarrow N_X$ qui commute avec les automorphismes de ce revêtement.
 \end{itemize}
\end{Def}

\begin{Rem} \label{Rem:UniCommu}
 Soit $N$ une surface connexe non homéomorphe à l'anneau $\A = \R/\Z\times\R$ ou au tore $\T = \R/\Z \times \R/\Z$.
 Alors le groupe des automorphismes du revêtement universel de $N$ est de centre trivial (\cite{Epstein66}, Lemma 4.3).
 Si $F$ est un homéomorphisme de $N$, il admet donc au plus un relèvement $\widehat F$ au revêtement universel qui commute avec les automorphismes de revêtement.
\end{Rem}

 Soit $(X,I) \in \mc I$ une isotopie restreinte.
 Relevons $I$ à $\widehat N_X$ en une isotopie~$\widehat I$ issue de l'identité $\id_{\widehat N_X}$.
 Le temps un de~$\widehat I$ est le relèvement $\widehat F_{I}$ associé à $I$ de la restriction $F_{|N_X}$.
 Or l'identité $\id_{\widehat N_X}$ commute avec les éléments du groupe $\widehat G$ des automorphismes du revêtement $\widehat\pi_X : \widehat N_X \rightarrow N_X$.
 Puisque $\widehat G$ est discret, la propriété de commutation est conservée le long de l'isotopie~$\widehat I$ et $\widehat F_I$ commute avec les éléments de $\widehat G$.

\begin{Def}[Relèvement associé à une isotopie restreinte] \label{Def:RelAsso}
 Soit $(X,I) \in \mc I$ une isotopie restreinte.
 Le relèvement $(X,\widehat F_{I}) \in \mc R$ est appelé \textit{relèvement associé à l'isotopie restreinte $(X,I)$}.
\end{Def}

Disposer d'un relèvement $\widehat F$ de $F$ qui commute avec les automorphismes de revêtement assure que les notions suivantes ne dépendent pas du relèvement $\widehat\alpha$ ou $\widehat\phi$ choisi.

\begin{Def}[Chemin et homotopie associés à un relèvement] \label{Def:Associe}
 Soit $(X,\widehat F) \in \mc R$.
 \begin{enumerate}
  \item Un chemin $\alpha \subset N_X$ est dit \textit{associé à $\widehat F$} s'il se relève à $\widehat N_X$ en un chemin $\widehat\alpha$ dont l'extrémité est image par $\widehat F$ de son origine.
  \item Soit $\phi : [0,1] \times \T \rightarrow N_X$ une homotopie libre entre un lacet $\Gamma \subset N_X$ et son image $F(\Gamma)$.
  Choisissons un relevé $\widehat\Gamma$ de $\Gamma$ à $\widehat N_X$.
  L'homotopie libre $\phi$ est dite \textit{associée à $\widehat F$} si elle se relève en $\widehat\phi : [0,1]^2 \to \widehat N_X$ vérifiant :
  \[
   \forall t \in [0,1] \qquad
   \widehat\phi(0,t) = \widehat\Gamma(t)
   \qqt{et}
   \widehat\phi(1,t) = \widehat F \circ \widehat\Gamma(t).
  \]
 \end{enumerate}
\end{Def}

\begin{Rem}\label{Rem:Associe}
 Soit $\phi : [0,1] \times \T \to N_X$ une homotopie libre entre un lacet $\Gamma \subset N_X$ et son image $F(\Gamma)$.
 S'il existe $z_0  = \phi(0,t_0) \in \Gamma$ tel que $\phi(z_0) : s \mapsto \phi(s,t_0)$ soit un chemin associé à $\widehat F$, autrement dit $\widehat\phi(1,t_0) = \widehat F(\widehat\Gamma(t_0)) = \widehat F(\widehat\phi(0,t_0))$, alors $\phi$ est associée à $\widehat F$.
\end{Rem}

\subsection{Chemin adapté à un ensemble de points fixes}
\label{ssec:Adapt}

Nous allons maintenant caractériser les chemins associés à un relèvement $(X, \widehat F) \in \mc R$.
Nous parlerons de chemins adaptés à l'ensemble $X$.

\begin{Def}[Chemin adapté] \label{Def:Adapte}
 Soient $F \in \Homeo_*(M)$ et $X \subset \Fix(F)$ un sous-ensemble fermé.
 Un chemin $\alpha$ d'origine un point $z$ de $N_X$ et d'extrémité $F(z)$ est dit \textit{adapté à $X$} si pour tout lacet $\gamma \subset N_X$ basé en $z$, le lacet $\alpha.F(\gamma).\alpha^{-}$ est homotope, dans $N_X$, au lacet~$\gamma$.
\end{Def}

\begin{Rem} \label{Rem:AdapteHomo}
 Si $\alpha \subset N_X$ est un chemin adapté à $X$ d'origine $z \in N_X$, alors tout lacet $\gamma$ basé en $z$ est librement homotope à son image $F(\gamma)$.
 En effet, les lacets $\gamma$ et $\alpha.F(\gamma).\alpha^{-}$ sont alors homotopes et il est clair que ce dernier lacet est librement homotope à $F(\gamma)$.
\end{Rem}

\begin{Prop} \label{Prop:PlusPrecis}
 Considérons $F \in \Homeo_*(M)$ et un ensemble fermé $X \subset \Fix(F)$.
 \begin{enumerate}
  \item Supposons que l'on dispose d'un relèvement $(X,\widehat F) \in \mc R$.
  Tout chemin de $N_X$ associé à $\widehat F$ est adapté à $X$.
  \item Inversement, supposons que l'on dispose d'un chemin $\alpha$ adapté à $X$ et qu'on le relève en un chemin $\widehat\alpha \subset \widehat N_X$.
  Si $N_X$ est connexe\footnote{Dans le cas où $N_X$ n'est pas connexe, il faut disposer d'un chemin adapté par composante connexe pour construire le relèvement $\widehat F$.
Si c'est le cas, la construction est possible et le relèvement obtenu commute avec les automorphismes du revêtement universel $\widehat\pi_X : \widehat N_X \rightarrow N_X$.}%
, le relèvement $\widehat F$ de $F_{|N_X}$ qui envoie l'origine de $\widehat\alpha$ sur son extrémité commute avec les automorphismes du revêtement universel $\widehat\pi_X : \widehat N_X \rightarrow N_X$.
  Le chemin $\alpha$ est alors associé à $\widehat F$ et $(X, \widehat F)$ appartient à $\mc R$.
 \end{enumerate}
\end{Prop}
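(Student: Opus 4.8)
The plan is to treat the two assertions as the two directions of the correspondence between liftable behaviour and the "adapted" condition, translating everything into a statement about the induced map on the fundamental group $\pi_1(N_X, z)$.

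\medskip

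\emph{First assertion.} Suppose $(X,\widehat F)\in\mc R$ and let $\alpha\subset N_X$ be a path from $z\in N_X$ to $F(z)$ associated to $\widehat F$; lift it to $\widehat\alpha$ with origin $\widehat z$, so by definition $\widehat\alpha(1)=\widehat F(\widehat z)$. Given any loop $\gamma\subset N_X$ based at $z$, I would lift $\gamma$ to the path $\widehat\gamma$ starting at $\widehat z$, ending at $T\cdot\widehat z$ for the deck transformation $T$ corresponding to $[\gamma]$. Then the concatenation $\alpha.F(\gamma).\alpha^{-}$ lifts, starting at $\widehat z$, to $\widehat\alpha$ followed by a lift of $F(\gamma)$ followed by a lift of $\alpha^{-}$. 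Using that $\widehat F$ commutes with deck transformations, the lift of $F(\gamma)$ issued from $\widehat F(\widehat z)=\widehat\alpha(1)$ is exactly $\widehat F\circ\widehat\gamma$, which ends at $\widehat F(T\cdot\widehat z)=T\cdot\widehat F(\widehat z)=T\cdot\widehat\alpha(1)$; and the subsequent lift of $\alpha^{-}$ is the translate $T\cdot\widehat\alpha^{-}$, ending at $T\cdot\widehat\alpha(0)=T\cdot\widehat z$. So the full lift of $\alpha.F(\gamma).\alpha^{-}$ issued from $\widehat z$ ends at $T\cdot\widehat z$, the same endpoint as $\widehat\gamma$. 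Since $\widehat N_X$ is simply connected, two paths with the same endpoints are homotopic rel endpoints upstairs, hence their projections are homotopic rel endpoints in $N_X$: this gives $\alpha.F(\gamma).\alpha^{-}\simeq\gamma$, i.e. $\alpha$ is adapted to $X$.

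\medskip

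\emph{Second assertion.} Conversely, assume $N_X$ connected and let $\alpha$ be adapted, lifted to $\widehat\alpha$ from $\widehat z$ to $\widehat\alpha(1)$. There is a unique lift $\widehat F$ of $F_{|N_X}$ with $\widehat F(\widehat z)=\widehat\alpha(1)$ (lifts of a continuous map from a simply connected space to a covering are determined by one value, and $F_{|N_X}$ is liftable because $\alpha$ witnesses that $F_*$ preserves the peripheral structure — more precisely, Remark~\ref{Rem:AdapteHomo} and the adapted condition show $F_*\pi_1(N_X,z)$ sits inside the image of the covering, which for the universal cover is automatic; the real content is that $\widehat F$ so defined is well-defined globally, which follows since $\widehat N_X$ is simply connected). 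To see that $\widehat F$ commutes with every deck transformation $T$: fix the loop $\gamma$ based at $z$ with $[\gamma]\mapsto T$. The adapted hypothesis says $\alpha.F(\gamma).\alpha^{-}\simeq\gamma$ rel endpoints in $N_X$. Lifting this homotopy from $\widehat z$, the path $\gamma$ lifts to $\widehat\gamma$ ending at $T\cdot\widehat z$, while $\alpha.F(\gamma).\alpha^{-}$ lifts to $\widehat\alpha$, then the lift of $F(\gamma)$ from $\widehat\alpha(1)=\widehat F(\widehat z)$, then a lift of $\alpha^{-}$; homotopy rel endpoints upstairs forces these to share an endpoint, and tracing through, the lift of $F(\gamma)$ from $\widehat F(\widehat z)$ must end at $T\cdot\widehat F(\widehat z)$. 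But this lift is $\widehat F\circ\widehat\gamma$ (it is a lift of $F\circ\gamma$ starting at the right point), so $\widehat F(\widehat\gamma(1))=T\cdot\widehat F(\widehat z)$, i.e. $\widehat F(T\cdot\widehat z)=T\cdot\widehat F(\widehat z)$. Thus $\widehat F\circ T$ and $T\circ\widehat F$ are two lifts of $F_{|N_X}$ agreeing at the point $\widehat z$, hence equal. This proves $(X,\widehat F)\in\mc R$, and since $\widehat\alpha$ goes from $\widehat z$ to $\widehat F(\widehat z)$, $\alpha$ is associated to $\widehat F$. The footnote case (non-connected $N_X$) is handled componentwise with the same argument.

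\medskip

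\emph{Main obstacle.} The routine computations are the lift-tracking bookkeeping; the one genuinely delicate point is the converse direction's claim that the equality $\widehat F\circ T=T\circ\widehat F$ for all $T$ follows from checking it at a single base point — this is exactly the rigidity of lifts to a covering space, and it is why the hypothesis "for \emph{all} loops $\gamma$ based at $z$" (not just a generating set, a priori) is built into the definition of adapted, though in fact a generating set suffices. I would make sure the unique-lift lemma is invoked cleanly, as it is the linchpin of both the well-definedness of $\widehat F$ and of the commutation.
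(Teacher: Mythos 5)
Your proposal is correct and follows essentially the same route as the paper: both directions are proved by the same lift-tracking argument, using simple connectedness of $\widehat N_X$ for the first assertion and the uniqueness of a lift agreeing at one point to get the commutation $\widehat F\circ\widehat T=\widehat T\circ\widehat F$ in the second. The only cosmetic difference is that the paper phrases the converse via the equality of endpoints of the lifts of $\alpha.F(\gamma)$ and $\gamma.\alpha$, which is exactly your bookkeeping in different words.
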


\begin{proof}
 \begin{enumerate}
  \item
  On suppose qu'il existe $(X, \widehat F) \in \mc R$.
  Soit $\alpha$ un chemin de $N_X$ associé à $\widehat F$ ;
  choisissons un relèvement $\widehat \alpha \in \widehat N_X$ de $\alpha$ et notons $\widehat z$ son origine.
  Son extrémité est donc $\widehat F(\widehat z)$.
  Considérons un lacet $\gamma \subset N_X$ basé en $z$ et relevons-le en un chemin $\widehat \gamma \subset \widehat N_X$ d'origine $\widehat z$.
  Il existe un unique automorphisme de revêtement $\widehat T$ tel que l'extrémité de $\widehat\gamma$ soit $\widehat T(\widehat z)$.
  Le chemin $\widehat F(\widehat\gamma)$ a pour origine $\widehat F(\widehat z)$ et pour extrémité $\widehat F \circ \widehat T(\widehat z)$ qui n'est autre que $\widehat T \circ \widehat F(\widehat z)$, extrémité du chemin $\widehat T(\widehat\alpha)$.
  On peut donc concaténer les chemins $\widehat\alpha$, $\widehat F(\widehat\gamma)$ et $\bigl( \widehat T(\widehat\alpha)\bigr)^{-}$.
  On obtient un chemin de mêmes extrémités $\widehat z$ et $\widehat T(\widehat z)$ que $\widehat\gamma$ ; il est donc homotope à $\widehat\gamma$ car $\widehat N_X$ est simplement connexe.
  On en déduit que les lacets $\alpha.F(\gamma).\alpha^{-}$ et $\gamma$ sont homotopes (à point base fixé) dans $N_X$ et donc que $\alpha$ est adapté à $X$.
  
  \item
  Inversement, soit $\widehat\alpha \subset \widehat N_X$ le relèvement d'un chemin $\alpha$ adapté à $X$ dans $N_X$ supposé connexe.
  On construit $\widehat F$ comme le relèvement de $F$ envoyant l'origine $\widehat z$ de $\widehat\alpha$ sur son extrémité.
  Considérons un automorphisme de revêtement $\widehat T$ et montrons l'égalité $\widehat T \circ \widehat F = \widehat F \circ \widehat T$.
  Choisissons un chemin $\widehat\gamma \subset \widehat N_X$ de $\widehat z$ à $\widehat T(\widehat z)$.
  Il se projette sur un lacet $\gamma \subset N_X$ qui est, par hypothèse, homotope au lacet $\alpha.F(\gamma).\alpha^{-}$.
  Cela signifie que les relèvements issus de $\widehat z$ des chemins $\alpha.F(\gamma)$ et $\gamma.\alpha$ ont même extrémité.
  Or le lacet $\alpha$ se relève à partir du point $\widehat z$ en le chemin $\widehat\alpha$ d'extrémité $\widehat F(\widehat z)$.
  À partir de ce point, $F(\gamma)$ se relève en $\widehat F(\widehat\gamma)$ d'extrémité $\widehat F \circ \widehat T(\widehat z)$.
  Par ailleurs, $\gamma$ se relève à partir de $\widehat z$ en $\widehat\gamma$ d'extrémité $\widehat T(\widehat z)$.
  Le relèvement de $\alpha$ issu de ce point est $\widehat T(\widehat\alpha)$ d'extrémité $\widehat T \circ \widehat F(\widehat z)$.
  On en déduit $\widehat T \circ \widehat F(\widehat z) = \widehat F \circ \widehat T(\widehat z)$ donc $\widehat T \circ \widehat F = \widehat F \circ \widehat T$. \qedhere
 \end{enumerate}
\end{proof}

\begin{Rem} \label{Rem:AutoExist}
 Soient $F \in \Homeo_*(M)$ et $X \subset \Fix(F)$ un ensemble fermé.
 La proposition précédente montre l'équivalence des propriétés suivantes :
 \begin{enumerate}
  \item il existe un relèvement $(X,\widehat F_X) \in \mc R$
  \item pour tout $z \in N_X$, il existe un chemin $\alpha_z$ adapté à $X$ d'origine $z$ ;
  \item il existe un chemin adapté à $X$ dans chaque composante connexe de $N_X$ ;
 \end{enumerate}
\end{Rem}

Lorsque les composantes connexes d'une surface $N$ ne sont pas homéomorphes au tore ou à l'anneau, le groupe des automorphismes de revêtement est de centre trivial.
On peut alors faire la remarque suivante.

\begin{Rem} \label{Rem:UniChAdapt}
  Soit $N$ une surface de revêtement universel $\widehat\pi : \widehat N \rightarrow N$, dont le groupe $\widehat G$ des automorphismes de revêtement est de centre trivial et soit $F$ un homéomorphisme de $N$.
  Alors deux chemins $\alpha$ et $\beta$ de même origine $z \in N_X$ et adaptés à $X$ sont homotopes à extrémités fixées dans $N_X$.
\end{Rem}

\begin{proof}
 En effet, d'après la proposition qui précède, chacun de ces deux chemins est associé à un relèvement de $F_{|N_X}$ qui commute avec les automorphismes du revêtement universel $\widehat\pi_X : \widehat N_X \rightarrow N_X$.
 Or, d'après la remarque \ref{Rem:UniCommu}, un tel relèvement $\widehat F$ est unique.
 Ainsi, $\alpha$ et $\beta$ se relèvent à $\widehat N_X$ en deux chemins d'origine un relevé $\widehat z$ de $z$ et de même extrémité $\widehat F(\widehat z)$.
 En d'autres termes, les chemins $\alpha$ et $\beta$ sont homotopes à extrémités fixées dans $N_X$.
\end{proof}

\subsection{Ensemble non enlacé}
\label{ssec:defenlac}

Afin de mieux comprendre les notions introduites ci-dessus, nous allons maintenant définir l'enlacement à partir de trois propriétés dont nous allons montrer
qu'elles sont équivalentes à l'aide du théorème d'Epstein \cite{Epstein66}.
Cette définition et cette équivalence permettent de mieux comprendre les notions utilisées dans l'article, mais elles ne seront pas utilisées sous cette forme dans la suite.
Nous avons préféré travailler avec une relation d'ordre qui sera introduite dans le paragraphe suivant.

\begin{PropDef} \label{PropDef:123}
 Soient $F \in \Homeo_*(M)$ et $X \subset \Fix(F)$ un ensemble fermé.
 On dit que $X$ est \textit{non enlacé} s'il vérifie l'une des propriétés équivalentes suivantes :
 \begin{enumerate}
  \item[\Pro1] il existe une isotopie restreinte $(X,I) \in \mc I$ ;
  \item[\Pro2] il existe un relèvement $(X,\widehat F) \in \mc R$ ;
  \item[\Pro3] tout lacet $\Gamma \subset N_X$ est librement homotope dans $N_X$ à son image $F(\Gamma)$.
 \end{enumerate}
\end{PropDef}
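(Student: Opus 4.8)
The goal is to establish the equivalence of the three conditions \Pro1, \Pro2, \Pro3. The plan is to prove the cyclic chain of implications $\Pro1 \Rightarrow \Pro2 \Rightarrow \Pro3 \Rightarrow \Pro1$, which gives the full equivalence. Two of the three implications are essentially contained in the material already developed in the previous subsections, so the real content is concentrated in the last one.

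\emph{The implication $\Pro1 \Rightarrow \Pro2$} is immediate from the construction preceding Definition~\ref{Def:RelAsso}: given an isotopy restreinte $(X,I) \in \mc I$, lifting $I$ to $\widehat N_X$ starting from the identity and taking the time-one map produces a lift $\widehat F_I$ of $F_{|N_X}$, and the discreteness of the deck group $\widehat G$ forces $\widehat F_I$ to commute with $\widehat G$ since the identity does and commutation is a closed condition preserved along the isotopy. Thus $(X, \widehat F_I) \in \mc R$.

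\emph{The implication $\Pro2 \Rightarrow \Pro3$} follows from the theory of adapted paths. Given $(X,\widehat F) \in \mc R$, fix any point $z \in N_X$ and any path $\widehat\alpha$ in $\widehat N_X$ from a lift $\widehat z$ of $z$ to $\widehat F(\widehat z)$; its projection $\alpha$ is a path associated to $\widehat F$, hence adapted to $X$ by the first part of Proposition~\ref{Prop:PlusPrecis}. Then Remark~\ref{Rem:AdapteHomo} shows that every loop based at $z$ is freely homotopic in $N_X$ to its image. For an arbitrary loop $\Gamma$ in $N_X$, choose a path in $N_X$ from $z$ to a point of $\Gamma$ (possible when $\Gamma$ lies in the same connected component as $z$; otherwise one repeats the argument componentwise, since $\mc R$ provides a commuting lift over every component) and conjugate to reduce to a loop based at $z$; free homotopy of the conjugated loop to its image transfers back to $\Gamma$.

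\emph{The implication $\Pro3 \Rightarrow \Pro1$} is the main obstacle and is where Epstein's theorem \cite{Epstein66} enters. Assuming every loop in $N_X$ is freely homotopic to its image under $F$, one must manufacture an actual isotopy on $N_X$ between $\id_{N_X}$ and $F_{|N_X}$. The strategy is first to promote the free-homotopy hypothesis to the existence, over each connected component of $N_X$, of a lift $\widehat F$ of $F_{|N_X}$ commuting with the deck transformations: using \Pro3 one builds, for each component, a loop and a free homotopy to its image, reads off from a lift of this homotopy a path $\alpha$ joining $z$ to $F(z)$, checks it is adapted to $X$ (the free-homotopy data controls exactly the conjugation equation in Definition~\ref{Def:Adapte}), and applies the second part of Proposition~\ref{Prop:PlusPrecis} together with Remark~\ref{Rem:AutoExist} to obtain $(X,\widehat F) \in \mc R$. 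Then one invokes Epstein's isotopy extension/realization result: a lift of $F_{|N_X}$ commuting with the deck group and inducing the identity outer automorphism on $\pi_1$ is isotopic to the identity through such lifts, and this isotopy descends to the required isotopy restreinte on $N_X$; this is precisely the content flagged in the Remark following Corollaire~\ref{Cor:princ}. The delicate points will be verifying that the path extracted from the free homotopy genuinely satisfies the adaptedness identity for \emph{all} loops based at $z$ (not merely the one used to build it), which requires exploiting that $\widehat N_X$ is simply connected so that the homotopy class of the lifted concatenation is determined by its endpoints, and handling the components of $N_X$ homeomorphic to the annulus or torus, where the commuting lift need not be unique and the correspondence with isotopies is slightly more subtle.
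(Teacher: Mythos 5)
Your implications \Pro1 $\Rightarrow$ \Pro2 and \Pro2 $\Rightarrow$ \Pro3 coincide with the paper's proof (lifting the isotopy and using the discreteness of the deck group for the first; adapted paths via Proposition~\ref{Prop:PlusPrecis} and Remark~\ref{Rem:AdapteHomo} for the second). The divergence, and the problem, is in \Pro3 $\Rightarrow$ \Pro1. The paper passes directly from \Pro3 to a homotopy of pairs between $\id$ and $F$ on $(M\setminus X,\partial(M\setminus X))$ by the Whitehead-type theorem it cites from Spanier, and then to an isotopy by Epstein's Theorem 6.3, explicitly declining to give details. You instead interpolate \Pro2: first extract an adapted path from the free-homotopy data to get a commuting lift, then invoke Epstein to upgrade the lift to an isotopy. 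The second half of your route is sound (it is exactly the content of the remark following Theorem~\ref{Th:princ}), but the first half contains a genuine gap.

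Concretely: being adapted to $X$ (Definition~\ref{Def:Adapte}) demands a \emph{single} path $\alpha$ from $z$ to $F(z)$ such that $\alpha.F(\gamma).\alpha^-$ is homotopic to $\gamma$ for \emph{every} loop $\gamma$ based at $z$; equivalently, the automorphism of $\pi_1(N_X,z)$ induced by $F$ after the basepoint change along $\alpha$ is the identity. Property \Pro3 only provides, for each loop $\gamma$ separately, \emph{some} path $\beta_\gamma$ --- a priori depending on $\gamma$ --- with $\beta_\gamma.F(\gamma).\beta_\gamma^-$ homotopic to $\gamma$; that is, the induced automorphism preserves each conjugacy class (it is \emph{pointwise inner}). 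Your parenthetical claim that the free-homotopy data of the one loop $\Gamma$ used to build $\alpha$ controls the conjugation equation for all $\gamma$ is exactly what is not justified. Passing from pointwise inner to inner is a genuine theorem about the fundamental groups of surfaces (for free groups it is Grossman's theorem, and it is precisely the content carried by the Whitehead/Epstein citations in the paper); it is not delivered by your suggested device of using the simple connectivity of $\widehat N_X$ to pin down homotopy classes by their endpoints --- that only shows that, for a fixed $\alpha$, the class of $\alpha.F(\gamma).\alpha^-$ is well defined, not that it equals $[\gamma]$. Until this step is supplied, or the argument is rerouted as in the paper through a homotopy of maps, your \Pro3 $\Rightarrow$ \Pro2, and hence \Pro3 $\Rightarrow$ \Pro1, is not proved.
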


\begin{proof}[Démonstration de l'équivalence des propriétés \Pro1, \Pro2 et \Pro3]
 Nous ne démontrons pas que la propriété $\Pro3$ entraîne la propriété $\Pro1$.
 Ce résultat provient d'un théorème difficile dû à D.B.A. Epstein.
 En effet, $F$ préserve les composantes de $\partial (M \setminus X)$.
 D'après un théorème de Whitehead (\cite{Spanier89}, Corollary 24 page 405), la propriété $\Pro3$ entraîne que l'identité et $F$ sont homotopes par une homotopie de paires sur $(M\setminus X,\partial(M\setminus X))$.
 D'après un théorème d'Epstein (\cite{Epstein66}, Theorem 6.3), il existe donc une isotopie entre l'identité et $F$.

 Démontrons les deux autres implications.
  Si $X$ vérifie la propriété $\Pro1$, alors il existe une isotopie restreinte $(X,I) \in \mc I$.
 On a vu que l'on pouvait lui associer un relèvement $(X,\widehat F) \in \mc R$ (voir ce qui précède la définition \ref{Def:RelAsso}).
 Ainsi, $X$ vérifie la propriété~$\Pro2$.
 
 Supposons maintenant que $X$ vérifie la propriété $\Pro2$ et montrons qu'il vérifie alors la propriété~$\Pro3$.
 Considérons un lacet $\Gamma \subset N_X$ basé en $z$.
 Soit $\alpha$ un chemin associé à $\widehat F$ issu de $z$ ; il est donc adapté à $X$ d'après la proposition \ref{Prop:PlusPrecis}.
 Le lacet $\Gamma$ est donc homotope à  $\alpha.F(\Gamma).\alpha^{-}$ qui est lui-même librement homotope à $F(\Gamma)$.
 L'ensemble $X$ vérifie donc la propriété \Pro3.
\end{proof}

\begin{Rem} \label{Rem:UniCommu2}
 Soit $X$ un ensemble non enlacé.
 D'après la remarque \ref{Rem:UniCommu}, lorsque $M \setminus X$ n'est pas homéomorphe à l'anneau ou au tore, le relèvement $(X, \widehat F)$ donné par la propriété $\Pro2$ est unique.
 C'est donc aussi le relèvement associé à l'isotopie restreinte $(X,I)$ donné par la propriété $\Pro1$.
 
 En revanche, dans le cas de l'anneau ou du tore, il n'y a pas unicité du relèvement $(X, \widehat F) \in \mc R$.
 Cependant, étant donné un relèvement $(X, \widehat F) \in \mc R$, on peut toujours trouver une isotopie restreinte $(X,I) \in \mc I$ tel que $(X, \widehat F)$ soit associé à $(X,I)$.
 En effet, le temps un d'une isotopie restreinte $(X,I)$ donnée par $\Pro1$ diffère de $(X, \widehat F)$ d'un automorphisme de revêtement.
 Ce dernier est associé à une isotopie entre l'identité et elle-même réalisant la rotation correspondante de l'anneau ou du tore.
 En composant l'inverse de cette isotopie avec $(X,I)$, on obtient l'isotopie restreinte cherchée.
\end{Rem}

\begin{Qu} \label{Qu:NEF}
 Comme on l'a évoqué dans l'introduction, on peut définir une notion de non enlacement \textit{a priori} plus forte que la précédente de la façon suivante.
 On dit que $X$ est \textit{fortement non enlacé} s'il existe une isotopie $I$ entre l'identité $\id_M$ et $F$ vérifiant $X \subset \Fix(I)$.
 Nous ne savons pas si cette notion est équivalente à la précédente et ne l'utiliserons pas dans cet article.
\end{Qu}

\begin{Rem} \label{Rem:NEF}
 Dans le cas particulier où $X$ est totalement discontinu, et notamment si $X$ est fini, il est facile de répondre à la question précédente.
 En effet, si $X$ est non enlacé, on peut trouver une isotopie restreinte $(X, I) \in \mc I$.
 Cette isotopie se prolonge en une isotopie sur $M$ fixant les points de $X$ et $X$ est donc fortement non enlacé.
\end{Rem}

\section{Présentation de la preuve} \label{sec:pres}

\subsection{Relation d'ordre sur les relèvements}

Notre objectif est la démonstration du théorème \ref{Th:princ}.
Dans cette optique, nous recherchons un relèvement $(X,\widehat F_X) \in \mc R$ avec $\widehat F_X$ sans point fixe.
Pour cela, nous allons utiliser une relation d'ordre choisie de façon à ce que les relèvements maximaux conviennent (proposition \ref{Prop:MaximaFaible}).
Introduisons la relation ; nous montrerons ensuite (proposition \ref{Prop:RO}) qu'il s'agit bien d'une relation d'ordre.

\begin{Def} \label{Def:RO3}
 On définit la relation suivante sur l'ensemble $\mc R$ ; on note
 $\bigl(X,\widehat F_X\bigr) \pprec \bigl(Y,\widehat F_Y\bigr)$ si :
 \begin{enumerate}
 \item on a 
   $X \subset Y \subset \Bigl(X \cup \widehat\pi_X\bigl(\Fix(\widehat F_X)\bigr)\Bigr)$ ;
 \item tout chemin de $N_Y$ associé à $\widehat F_Y$ est également associé à $\widehat F_X$.
\end{enumerate}
\end{Def}

\begin{Rem}
 L'ensemble $\widehat\pi_X(\Fix(\widehat F_X))$ est fermé.
\end{Rem}

\begin{Rem}
 Si $\widehat F_X$ est sans point fixe, alors $(X, \widehat F_X)$ est maximal.
 En effet, l'assertion (1) entraîne alors $X=Y$.
\end{Rem}

\begin{Rem} \label{Rem:R01}
 La relation $(X,\widehat F_X) \pprec (Y,\widehat F_Y)$ entraîne l'inclusion $\widehat\pi_Y\bigl(\Fix(\widehat F_Y)\bigr) \subset \widehat\pi_X\bigl(\Fix(\widehat F_X)\bigr)$.
 En effet si $z$ appartient à $\widehat\pi_Y\bigl(\Fix(\widehat F_Y)\bigr)$, alors le lacet trivial basé en $z$ est associé à $\widehat F_Y$ donc à $\widehat F_X$.
\end{Rem}
 
\begin{Rem} \label{Rem:R02}
 Si $(X,\widehat F_X) \pprec (Y,\widehat F_Y)$ et $X=Y$, alors $(X,\widehat F_X) = (Y,\widehat F_Y)$.
 En effet, par hypothèse, tout chemin de $N_X = N_Y$ associé à $\widehat F_Y$ est aussi associé à $\widehat F_X$ ce qui entraîne l'égalité $\widehat F_X = \widehat F_Y$.
\end{Rem}

\begin{Rem} \label{Rem:R03}
 On peut remplacer dans la définition la deuxième propriété par la propriété \textit{a priori} plus faible :
\begin{quotation}
«~il existe dans chaque composante connexe de $N_Y$ un chemin associé à $\widehat F_Y$ et à $\widehat F_X$~».
\end{quotation}
 En effet, supposons l'existence d'un chemin $\alpha$ associé à $\widehat F_Y$ et à $\widehat F_X$ dans une composante connexe $U$ de $N_X$.
 Montrons qu'alors tout chemin de $U$ associé à $\widehat F_Y$ est aussi associé à $\widehat F_X$.
 Soit $\alpha' \subset U$ associé à $\widehat F_Y$.
 Relevons les chemins $\alpha$ et $\alpha'$ en $\widehat\alpha \subset \widehat N_Y$ et $\widehat\alpha' \subset \widehat N_Y$.
 Choisissons un chemin $\widehat\gamma$ joignant l'origine de $\widehat\alpha$ à celle de $\widehat\alpha'$.
 Le chemin $\widehat F_Y(\widehat\gamma)$ joint l'extrémité de $\widehat\alpha$ à celle de $\widehat\alpha'$.
 Comme les composantes connexes de $\widehat N_Y$ sont simplement connexes, on peut donc construire une famille continue $(\widehat\alpha_t)_{t \in [0,1]}$ de chemins telle que $\widehat\alpha_0 = \widehat\alpha$, $\widehat\alpha_1 = \widehat\alpha'$ et pour tout $t \in [0,1]$, $\widehat\alpha_t$ ait pour origine $\widehat\gamma(t)$ et pour extrémité $\widehat F_Y\bigl(\widehat\gamma(t)\bigr)$.
 Par projection, on obtient alors une famille continue $(\alpha_t)_{t \in [0,1]}$ de chemins.
 Comme par hypothèse $\alpha_0 = \alpha$ est associé à $\widehat F_X$, tous les chemins $\alpha_t$ sont associés à $\widehat F_X$.
 En particulier, le chemin $\alpha_1 = \alpha'$ est associé à $\widehat F_X$.
\end{Rem}

\begin{Prop} \label{Prop:RO}
 La relation $\pprec$ est une relation d'ordre sur $\mc R$.
\end{Prop}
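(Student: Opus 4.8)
We must verify that $\pprec$ is reflexive, antisymmetric and transitive on $\mc R$.

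Reflexivity is immediate: for $(X,\widehat F_X) \in \mc R$ we have $X \subset X \subset X \cup \widehat\pi_X(\Fix(\widehat F_X))$, and a chemin de $N_X$ associé à $\widehat F_X$ est trivialement associé à $\widehat F_X$, so $(X,\widehat F_X) \pprec (X,\widehat F_X)$. Antisymmetry is also short and has in fact already been recorded: if $(X,\widehat F_X) \pprec (Y,\widehat F_Y)$ and $(Y,\widehat F_Y) \pprec (X,\widehat F_X)$, then the two inclusion conditions force $X \subset Y$ and $Y \subset X$, hence $X = Y$, and then Remark~\ref{Rem:R02} gives $\widehat F_X = \widehat F_Y$, i.e.\ $(X,\widehat F_X) = (Y,\widehat F_Y)$.

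The substantive point is transitivity. Suppose $(X,\widehat F_X) \pprec (Y,\widehat F_Y)$ and $(Y,\widehat F_Y) \pprec (Z,\widehat F_Z)$; we want $(X,\widehat F_X) \pprec (Z,\widehat F_Z)$. Condition~(1) of the definition: we already have $X \subset Y \subset Z$, so it remains to check $Z \subset X \cup \widehat\pi_X(\Fix(\widehat F_X))$. From the second hypothesis, $Z \subset Y \cup \widehat\pi_Y(\Fix(\widehat F_Y))$; and by Remark~\ref{Rem:R01} applied to $(X,\widehat F_X) \pprec (Y,\widehat F_Y)$ we have $\widehat\pi_Y(\Fix(\widehat F_Y)) \subset \widehat\pi_X(\Fix(\widehat F_X))$, while $Y \subset X \cup \widehat\pi_X(\Fix(\widehat F_X))$ by condition~(1) of the first hypothesis. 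Combining, $Z \subset Y \cup \widehat\pi_Y(\Fix(\widehat F_Y)) \subset X \cup \widehat\pi_X(\Fix(\widehat F_X))$, as required.

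For condition~(2) of transitivity, let $\alpha \subset N_Z$ be a chemin associé à $\widehat F_Z$; we must show $\alpha$ est associé à $\widehat F_X$. The natural route is to pass through $\widehat F_Y$: first show $\alpha$ (or a path with the same endpoints, lying in $N_Z \subset N_Y$) is associé à $\widehat F_Y$, then invoke the first hypothesis to conclude it is associé à $\widehat F_X$. Here I expect the main obstacle: $\alpha$ lives in $N_Z$, and its lift of interest is to $\widehat N_Z$, whereas the hypothesis $(Y,\widehat F_Y) \pprec (Z,\widehat F_Z)$ speaks about chemins de $N_Z$ associés à $\widehat F_Z$ being associés à $\widehat F_Y$ — so in fact this is exactly condition~(2) of the second hypothesis, applied to $\alpha$ itself, giving that $\alpha$ est associé à $\widehat F_Y$. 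Then $\alpha \subset N_Y$ (since $N_Z \subset N_Y$) is a chemin de $N_Y$ associé à $\widehat F_Y$, and hypothesis~(2) of $(X,\widehat F_X) \pprec (Y,\widehat F_Y)$ yields that $\alpha$ est associé à $\widehat F_X$. Strictly speaking one should be careful that ``$\alpha \subset N_Z$ associé à $\widehat F_Z$'' and ``$\alpha \subset N_Y$ associé à $\widehat F_Y$'' refer to lifts to different covering spaces $\widehat N_Z$ and $\widehat N_Y$; but this is precisely what Definition~\ref{Def:RO3}(2) is designed to handle, and composing the two implications $(\text{associé à }\widehat F_Z) \Rightarrow (\text{associé à }\widehat F_Y) \Rightarrow (\text{associé à }\widehat F_X)$ gives the result. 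So transitivity follows by chaining the two conditions~(2) together.
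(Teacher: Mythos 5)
Your proof is correct and follows essentially the same route as the paper's: reflexivity is trivial, antisymmetry reduces to Remark~\ref{Rem:R02} once $X=Y$ is forced, and for transitivity condition~(1) you combine Remark~\ref{Rem:R01} with the inclusion $Y \subset X \cup \widehat\pi_X\bigl(\Fix(\widehat F_X)\bigr)$ exactly as the paper does. The only difference is that the paper dismisses condition~(2) as immediate, whereas you spell out the chaining $(\text{associé à }\widehat F_Z) \Rightarrow (\text{associé à }\widehat F_Y) \Rightarrow (\text{associé à }\widehat F_X)$, together with the (correct) observation that each implication refers to lifts in the appropriate covering space; this is precisely the intended argument.
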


\begin{proof}
 Il est clair que la relation $\pprec$ est réflexive.
 Montrons qu'elle est antisymétrique.
 Supposons $(X,\widehat F_X) \pprec (Y,\widehat F_Y)$ et $(Y,\widehat F_Y) \pprec (X,\widehat F_X)$.
 On en déduit $X = Y$ puis $(X,\widehat F_X) = (Y,\widehat F_Y)$ d'après la remarque \ref{Rem:R02}.
 
 Enfin, montrons que $\pprec$ est transitive.
 Supposons $(X,\widehat F_X) \pprec (Y,\widehat F_Y)$ et $(Y,\widehat F_Y) \pprec (Z,\widehat F_Z)$.
 On a donc :
 \begin{gather*}
  X \subset Y \subset \Bigl(X \cup \widehat\pi_X\bigl(\Fix(\widehat F_X)\bigr)\Bigr)
  \qt{et}
  Y \subset Z \subset \Bigl(Y \cup \widehat\pi_Y\bigl(\Fix(\widehat F_Y)\bigr)\Bigr)\\
  \text{puis}\qquad
  X \subset Z
  \subset \Bigl(Y \cup \widehat\pi_Y\bigl(\Fix(\widehat F_Y)\bigr)\Bigr)
  \subset \Bigl(X \cup \widehat\pi_X\bigl(\Fix(\widehat F_X)\bigr)\Bigr),
 \end{gather*}
 en utilisant la remarque \ref{Rem:R01} pour obtenir la dernière inclusion.
 La propriété $2$ est immédiatement vérifiée.
 \end{proof}

Avant de poursuivre, revenons sur la remarque \ref{Rem:UniCommu2}.
Lorsque certaines composantes connexes de $N_X$ sont homéomorphes à l'anneau ou au tore, il n'y a pas unicité, s'il existe, du relèvement $(X,\widehat F_X) \in \mc R$.
Nous avons expliqué dans l'introduction que cela justifiait la présence de la troisième condition dans les conclusions du théorème \ref{Th:princ}.
Un exemple d'une telle situation est donné par l'homéomorphisme $F : \C \to \C$ défini par :
\[
 F(z) = e^{8i\arctan(|z|)} z.
\]
L'ensemble $X = \Fix(F)$ est la réunion du cercle unité et du point $0$.
On cherche un relèvement $(X,\widehat F_X) \in \mc R$ : choisissons celui qui est associé à la restriction à $N_X$ de l'isotopie $I$ définie par :
\[
 \phi : \fonction{[0,1] \times \C}\C{(t,z)}{e^{8it\arctan(|z|)} z}.
\]
Ce relèvement, simple à construire, vérifie les deux premières conditions du théorème \ref{Th:princ} mais pas la troisième.

Pour le voir, considérons par exemple le sous-ensemble $Y =  \{0,1\}$ de $X$.
Pour satisfaire la troisième condition, il faudrait d'abord trouver un relèvement $(Y,\widehat F_Y) \in \mc R$.
D'après la remarque \ref{Rem:UniCommu2}, $N_Y$ n'étant homéomorphe ni à l'anneau ni au tore, un tel relèvement est unique.
C'est donc nécessairement celui qui est associé à la restriction à $N_Y$ de l'isotopie $J$ définie par :
\[
 \psi : \fonction{[0,1] \times \C}\C{(t,z)}{e^{it(8\arctan(|z|)-2\pi)} z}.
\]
Mais les chemins $t \mapsto \phi(t,\sqrt3) = e^{\frac{8it\pi}3} \sqrt3$ et $t \mapsto \psi(t,\sqrt3) = e^{\frac{2it\pi}3} \sqrt3$ ne sont pas homotopes à extrémités fixées dans $N_Y$.
Le relèvement $(X, \widehat F_X)$ ne vérifie donc pas la fin de la troisième condition.

Cependant, l'isotopie $J$ est mieux choisie que la précédente car elle vérifie $\Fix(J) = X$, ce qui montre d'ailleurs que $X$ est fortement non enlacé.
À la restriction de $J$ à $N_X$ est associé un relèvement $(X, \widehat F'_X)$.
C'est celui que l'on obtient à partir de la notion de non enlacement fort et le seul à vérifier la troisième condition du théorème \ref{Th:princ}. On peut caractériser ce relèvement : c'est le seul qui appartient à l'ensemble $\mc R'$ suivant.

\begin{Def}
 On note $\mc R'$ l'ensemble des couples $(X,\widehat F_X) \in \mc R$ tels que pour tout sous-ensemble fermé $Y \subset X$, il existe $(Y,\widehat F_Y) \in \mc R$ vérifiant $(Y,\widehat F_Y) \pprec (X,\widehat F_X)$.
 La relation $\pprec$ définit un ordre sur~$\mc R'$.
\end{Def}

Avant de poursuivre, et pour clarifier les idées, vérifions que l'unicité du relèvement, qui a motivé l'introduction de $\mc R'$, est bien une propriété vérifiée dans $\mc R'$.

\begin{Prop} \label{Prop:UniRR}
 Soit $X \subset \Fix(F)$ un sous-ensemble fermé non enlacé.
On exclut les cas particuliers suivants :
\begin{itemize}
 \item $M$ est l'anneau ou le tore et $X$ est l'ensemble vide ;
 \item $M$ est le disque et $X$ un singleton ;
 \item $M$ est la sphère et $X$ est de cardinal $2$.
\end{itemize}
Alors il existe au plus un relèvement $(X, \widehat F_X) \in \mc R'$.
\end{Prop}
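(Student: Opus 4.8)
On suppose donn\'es deux rel\`evements $(X,\widehat F_X)$ et $(X,\widehat F'_X)$ de $\mc R'$ et l'on veut montrer qu'ils co\"\i ncident. Fixons une composante connexe $U$ de $N_X$. Sa pr\'eimage $\widehat\pi_X^{-1}(U)$ est connexe et c'est le rev\^etement universel de $U$~; de plus $F$ pr\'eserve $U$, car $X$ \'etant non enlac\'e il porte une isotopie restreinte, le long de laquelle les composantes de $N_X$ sont pr\'eserv\'ees. Puisque $\widehat F_X$ et $\widehat F'_X$ sont deux rel\`evements de $F_{|U}$, pour qu'ils co\"\i ncident sur $\widehat\pi_X^{-1}(U)$ il suffit qu'ils co\"\i ncident en un point~; il suffit donc d'exhiber, dans chaque composante $U$, un chemin $\gamma\subset U$ associ\'e \`a la fois \`a $\widehat F_X$ et \`a $\widehat F'_X$, car en relevant un tel chemin en un chemin d'origine $\widehat z\in\widehat\pi_X^{-1}(U)$ on obtient $\widehat F_X(\widehat z)=\widehat F'_X(\widehat z)$. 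On raisonne donc composante par composante.

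Si $U$ n'est hom\'eomorphe ni \`a l'anneau $\A$ ni au tore $\T$, le groupe des automorphismes du rev\^etement universel de $U$ est de centre trivial (th\'eor\`eme d'Epstein, cf. remarque~\ref{Rem:UniCommu})~; les restrictions de $\widehat F_X$ et de $\widehat F'_X$ \`a $\widehat\pi_X^{-1}(U)$ sont alors toutes deux l'unique rel\`evement de $F_{|U}$ commutant avec ces automorphismes, et co\"\i ncident donc. Si $U$ est hom\'eomorphe au tore, alors $U$ est compact et ouvert dans la surface connexe $M$, donc $U=M$, d'o\`u $X=\varnothing$ et $M=\T$~: cas exclu. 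Ne restent que les composantes $U\cong\A$.

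C'est l\`a l'essentiel de la preuve. Soit $U\cong\A$ une telle composante. Si $U=M$, alors $M=\A$ et $X=\varnothing$, cas exclu~; sinon, $M$ \'etant connexe, $\partial U$ est un ferm\'e non vide de $X$. Le plan est de produire un sous-ensemble ferm\'e $Y\subsetneq X$ tel que la composante connexe $V$ de $N_Y$ qui contient $U$ ne soit hom\'eomorphe ni \`a $\A$ ni \`a $\T$, en posant $Y=X\setminus O$ pour un ouvert $O$ de $M$ bien choisi rencontrant $\partial U$~: on absorbe ainsi dans $N_Y$ un morceau relativement ouvert de $X$ bordant l'un des deux bouts de l'anneau $U$, de fa\c{c}on \`a l'\'epaissir en une surface plus compliqu\'ee. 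On v\'erifie en outre que la condition~(1) de la d\'efinition~\ref{Def:RO3}, c'est-\`a-dire l'inclusion $X\setminus Y\subset\widehat\pi_Y(\Fix(\widehat F_Y))$, contraint le rel\`evement $\widehat F_Y$ sur $\widehat\pi_Y^{-1}(V)$. Une fois un tel $Y$ construit, l'appartenance de $(X,\widehat F_X)$ et de $(X,\widehat F'_X)$ \`a $\mc R'$ fournit des rel\`evements $(Y,\widehat F_Y)\pprec(X,\widehat F_X)$ et $(Y,\widehat F'_Y)\pprec(X,\widehat F'_X)$ dans $\mc R$~; comme $V$ n'est ni un anneau ni un tore, $\widehat F_Y$ et $\widehat F'_Y$ co\"\i ncident sur $\widehat\pi_Y^{-1}(V)$ (remarque~\ref{Rem:UniCommu}). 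On choisit alors dans $U$ un chemin $\gamma$ associ\'e \`a $\widehat F_Y$~: un tel chemin existe par non enlacement de $Y$ (remarque~\ref{Rem:AutoExist}) et, les points de $X\setminus Y$ \'etant des points fixes contractiles de $\widehat F_Y$, quitte \`a le modifier au voisinage de ces points on peut le supposer \`a valeurs dans $N_X$, donc dans $U$. Le chemin $\gamma$ est aussi associ\'e \`a $\widehat F'_Y$, donc, par la propri\'et\'e~(2) de $\pprec$ sous la forme de la remarque~\ref{Rem:R03}, \`a $\widehat F_X$ et \`a $\widehat F'_X$. Le premier paragraphe donne alors l'\'egalit\'e cherch\'ee sur $\widehat\pi_X^{-1}(U)$, ce qui ach\`eve la preuve.

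Le point d\'elicat est la construction du bon $Y$~: il faut d\'ecrire la fronti\`ere d'une composante annulaire de $M\setminus X$ et s'assurer que, hors des configurations list\'ees, on peut toujours \'epaissir $U$ en une composante $V$ de $M\setminus Y$ qui ne soit plus annulaire ni torique, tout en gardant le lacet central de $U$ essentiel dans $V$. Les deux derni\`eres configurations exclues --- le disque avec $X$ r\'eduit \`a un point, la sph\`ere avec $X$ de cardinal~$2$ --- sont pr\'ecis\'ement celles o\`u tout ferm\'e $Y\subsetneq X$ fournit une composante $V$ simplement connexe (un disque, respectivement une sph\`ere) dans laquelle le lacet central de $U$ devient trivial~: la propri\'et\'e~(2) ne distingue alors plus les diff\'erents rel\`evements $\widehat F_X$ concurrents, et la propri\'et\'e~(1) n'impose rien, d'o\`u l'\'echec de l'unicit\'e.
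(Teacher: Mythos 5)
Votre strat\'egie (raisonner composante par composante, puis traiter les composantes annulaires $U$ de $N_X$ en les \'epaississant via un ferm\'e $Y=X\setminus O\subsetneq X$ bien choisi) est r\'eellement diff\'erente de celle du papier, mais elle comporte un trou au point que vous signalez vous-m\^eme comme \og le point d\'elicat \fg{} : la construction de $Y$ n'est jamais effectu\'ee. Ce n'est pas un d\'etail. Pour un ferm\'e $X$ quelconque, le choix na\"{\i}f d'une petite boule $O$ rencontrant $\partial U$ peut laisser la composante $V$ de $N_Y$ contenant $U$ hom\'eomorphe \`a l'anneau : prenez $M=S^2$ et $X$ r\'eunion de deux disques ferm\'es disjoints ; $U=N_X$ est un anneau, et retirer \`a l'un des disques un voisinage relativement ouvert d'un point de son bord redonne deux disques ferm\'es disjoints, donc $V$ reste un anneau. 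Il faut donc une analyse de la structure de $X$ pr\`es des deux bouts de $U$ (bouts de $M$, points isol\'es de $X$, continua, etc.) pour garantir \`a la fois que $V$ n'est ni un anneau ni un tore et que le lacet central de $U$ reste essentiel dans $V$ --- c'est pr\'ecis\'ement l\`a que les trois configurations exclues doivent appara\^{\i}tre, et rien dans votre texte ne le d\'emontre. Par ailleurs, votre conclusion \og $\gamma$ associ\'e \`a $\widehat F_Y$ donc \`a $\widehat F_X$ \fg{} utilise la relation $\pprec$ \`a l'envers : la condition (2) de la d\'efinition~\ref{Def:RO3} pour $(Y,\widehat F_Y)\pprec(X,\widehat F_X)$ dit que tout chemin associ\'e \`a $\widehat F_X$ est associ\'e \`a $\widehat F_Y$, et non la r\'eciproque. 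L'argument se r\'epare en prenant $\alpha,\beta\subset U$ associ\'es respectivement \`a $\widehat F_X$ et $\widehat F'_X$, en observant que tous deux sont associ\'es \`a $\widehat F_Y=\widehat F'_Y$, donc que $\alpha.\beta^-$ est contractile dans $V$, puis dans $U$ \emph{si} $\pi_1(U)\to\pi_1(V)$ est injectif --- ce qui ram\`ene encore \`a la construction manquante.

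Pour comparaison, le papier contourne enti\`erement cette topologie d'anneaux : il traite d'un coup le cas o\`u le centre du groupe des automorphismes est trivial (ce qui couvre $X$ fini hors cas exclus), puis, pour $X$ infini, forme le lacet $\Gamma=\alpha.\beta^-$ et montre qu'il est contractile dans $N_Z$ pour \emph{tout} sous-ensemble fini $Z\subset X$ (en passant par un $Z'$ de cardinal $3$, o\`u l'unicit\'e du rel\`evement est acquise), avant de conclure par le lemme~\ref{Le:T1} : un lacet contractile dans $N_Z$ pour toute partie finie $Z\subset X$ est contractile dans $N_X$. C'est ce lemme d'exhaustion par parties finies qui remplace, de fa\c{c}on uniforme, toute la discussion g\'eom\'etrique que votre approche laisse en suspens ; si vous tenez \`a votre sch\'ema, c'est un \'enonc\'e de ce type qu'il faudrait d\'emontrer.
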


La démonstration de ce résultat repose sur le lemme suivant :

\begin{Lemme} \label{Le:T1}
Soit $X \subset M$ une partie fermée et $\Gamma \subset N_X$ un lacet.
Alors $\Gamma$ est homotope à zéro dans $N_X$ si et seulement si $\Gamma$ est homotope à zéro dans $N_Z$ pour toute partie finie $Z \subset X$.
\end{Lemme}

\begin{proof}[Démonstration du lemme \ref{Le:T1}]
Si $\Gamma$ est homotope à zéro dans $N_X$, alors pour toute partie finie $Z \subset X$ on a $N_X \subset N_Z$ donc $\Gamma$ est homotope à zéro dans $N_Z$.

Réciproquement, supposons que pour toute partie finie $Z \subset X$, le lacet $\Gamma$ est homotope à zéro dans $N_Z$.
Considérons une surface à bord compacte connexe $K_\Gamma \subset N_X$ contenant $\Gamma$.
Comme $M$ est connexe, toute composante connexe de $M \setminus K_\Gamma$ rencontre une composante de la frontière $\partial K_\Gamma$ de $K_\Gamma$.
Par compacité de $K_\Gamma$, les composantes connexes de $M \setminus K_\Gamma$ sont donc en nombre fini.
Pour chaque composante connexe $C$ de $M \setminus K_\Gamma$ rencontrant $X$, on choisit un point $z_C \in C \cap X$.
On note $Z$ l'ensemble fini obtenu par réunion de ces points.
Enfin, on note $N$ la réunion de $K_\Gamma$ et des composantes connexes de $M \setminus K_\Gamma$ qui ne rencontrent pas $C$.

Considérons le revêtement universel $\widehat\pi : \widehat N_Z \to N_Z$ de $N_Z$ et une composante connexe $\widehat N \subset \widehat N_Z$ de la préimage de $N$ par $\widehat\pi$.
Le lacet $\Gamma$ se relève à $\widehat N$ en un chemin $\widehat\Gamma$ issu de $\widehat z$ qui est un lacet puisque, par hypothèse, $\Gamma$ est homotope à zéro dans $N_Z$.
Par ailleurs, remarquons que $\widehat N$ est simplement connexe car s'il existait une composante connexe bornée de $\widehat N_Z \setminus \widehat N$, elle se projetterait sur une composante connexe de $M \setminus N$ ne rencontrant pas $Z$ ce qui contredirait la construction de $N$ et $Z$.
Le lacet $\widehat\Gamma$ est donc homotope à zéro dans $\widehat N$ et son projeté $\Gamma$ est donc homotope à zéro dans $N \subset N_X$ donc dans $N_X$.
\end{proof}

\begin{proof}[Démonstration de la proposition \ref{Prop:UniRR}]
L'unicité de $(X, \widehat F_X)$ est immédiate lorsque le groupe des automorphismes du revêtement universel de $N_X$ est de centre trivial.
Lorsque $X$ est fini, cette propriété est toujours vérifiée ici compte tenu des cas particuliers que nous avons exclus.

Supposons maintenant $X$ infini.
Soient deux relèvements $(X, \widehat F_X) \in \mc R'$ et $(X, \widehat G_X) \in \mc R'$.
Choisissons un point $z \in N_X$ et deux chemins $\alpha$ et $\beta$ issus de $z$, le premier associé à $\widehat F_X$ et le second à $\widehat G_X$.
Considérons maintenant le lacet $\Gamma = \alpha.\beta^-$.

Pour tout sous-ensemble fini $Z \subset X$ de cardinal au moins $3$, par définition de $\mc R'$, on peut trouver $(Z, \widehat F_Z) \in \mc R$ vérifiant $(Z, \widehat F_Z) \pprec (X, \widehat F_X)$ et $(Z, \widehat G_Z) \in \mc R$ vérifiant $(Z, \widehat G_Z) \pprec (X, \widehat G_X)$.
Mais puisque $Z$ est fini de cardinal au moins $3$, le groupe des automorphismes du revêtement universel de $N_Z$ est de centre trivial et $\widehat F_Z$ est donc unique d'où $\widehat F_Z = \widehat G_Z$.
Les chemins $\alpha$ et $\beta$ sont donc tous deux associés à $\widehat F_Z$ donc $\Gamma$ est homotope à zéro dans $N_Z$.

Si $Z \subset X$ est un sous-ensemble fini de cardinal inférieur ou égal à $2$, on peut trouver un ensemble $Z'$ fini de cardinal $3$ vérifiant $Z \subset Z' \subset X$.
D'après ce qui précède, $\Gamma$ est homotope à zéro dans $N_{Z'}$ donc dans $N_Z$ comme précédemment.

Nous pouvons donc utiliser le lemme \ref{Le:T1} et en déduire que $\Gamma$ est homotope à zéro dans $N_X$ ce qui montre que $\alpha$ et $\beta$ sont homotopes dans $N_X$ donc $\widehat F_X = \widehat G_X$.
\end{proof}

\subsection{Preuve du théorème \ref{Th:princ}}

Nous démontrerons dans la partie \ref{sec:induc} les résultats suivants concernant l'existence de relèvements maximaux.

\begin{Prop} \label{Prop:existmax}
 Soient $F \in \Homeo_*(M)$ et $(X,\widehat F_X) \in \mc R$.
 Alors il existe $(Y,\widehat F_Y) \in \mc R$ maximal vérifiant $(X,\widehat F_X) \pprec (Y,\widehat F_Y)$.
\end{Prop}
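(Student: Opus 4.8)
The plan is to apply Zorn's lemma to the set $\mc S$ of couples $(Y,\widehat F_Y)\in\mc R$ satisfying $(X,\widehat F_X)\pprec(Y,\widehat F_Y)$, ordered by $\pprec$ (a partial order by proposition~\ref{Prop:RO}) and containing $(X,\widehat F_X)$. So I must show that every chain $\mc C=\bigl((Y_i,\widehat F_{Y_i})\bigr)_{i\in I}$ of $\mc S$ admits an upper bound in $\mc S$. I may assume $\mc C$ non empty, since otherwise $(X,\widehat F_X)$ is such a bound, and I set $Y=\overline{\bigcup_{i\in I} Y_i}$. As $\Fix(F)$ is closed, $Y$ is a closed subset of $\Fix(F)$ containing $X$; and since, for each fixed $i$, every $Y_j$ lies in the closed set $Y_i\cup\widehat\pi_{Y_i}(\Fix(\widehat F_{Y_i}))$ (by the first clause of $\pprec$ when $(Y_i,\widehat F_{Y_i})\pprec(Y_j,\widehat F_{Y_j})$, and trivially otherwise), the set $Y$ is contained in $Y_i\cup\widehat\pi_{Y_i}(\Fix(\widehat F_{Y_i}))$, hence, by remark~\ref{Rem:R01} and $(X,\widehat F_X)\pprec(Y_i,\widehat F_{Y_i})$, in $X\cup\widehat\pi_X(\Fix(\widehat F_X))$. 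It therefore remains to construct a lift $\widehat F_Y$ with $(Y,\widehat F_Y)\in\mc R$ and $(Y_i,\widehat F_{Y_i})\pprec(Y,\widehat F_Y)$ for every $i$; transitivity of $\pprec$ then yields $(X,\widehat F_X)\pprec(Y,\widehat F_Y)$, so that $(Y,\widehat F_Y)$ is the required upper bound.

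For this last step, remark~\ref{Rem:AutoExist} reduces $(Y,\widehat F_Y)\in\mc R$ to exhibiting, in each connected component $U$ of $N_Y$, a path $\alpha$ from a point $z$ to $F(z)$ that is adapted to $Y$; and by remark~\ref{Rem:R03} the relations $(Y_i,\widehat F_{Y_i})\pprec(Y,\widehat F_Y)$ for all $i$ (and hence $(X,\widehat F_X)\pprec(Y,\widehat F_Y)$) will follow as soon as each such $\alpha$, viewed in $N_{Y_i}\supset N_Y$, is associated to $\widehat F_{Y_i}$ for every $i$. The decisive point is that adaptedness to $Y$ is detected on finite subsets of $Y$: by definition $\alpha$ is adapted to $Y$ exactly when the loop $\alpha.F(\gamma).\alpha^-.\gamma^-$ is homotopic to zero in $N_Y$ for every loop $\gamma\subset N_Y$ based at $z$, and by lemma~\ref{Le:T1} applied to the closed set $Y$ this holds if and only if that loop is homotopic to zero in $N_Z$ for every finite $Z\subset Y$. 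Given such $Z$ and $\gamma$, I enclose the loop $\alpha.F(\gamma).\alpha^-.\gamma^-$ in a compact connected subsurface $K\subset N_Y$; then $M\setminus K$ has finitely many components, each of the ones meeting $Z$ also meets $\bigcup_i Y_i$ (being open and meeting $Y=\overline{\bigcup_i Y_i}$), and as $\mc C$ is a chain a single $Y_{i_0}$ meets all of these finitely many components. If $\alpha$ is associated to $\widehat F_{Y_{i_0}}$, hence adapted to $Y_{i_0}$ by proposition~\ref{Prop:PlusPrecis}, then $\alpha.F(\gamma).\alpha^-.\gamma^-$ is homotopic to zero in $N_{Y_{i_0}}$, and the passage from this to homotopy to zero in $N_Z$ is the same compact-subsurface comparison as in the proof of lemma~\ref{Le:T1}: the subsurface made of $K$ together with the complementary components meeting neither $Z$ nor $Y_{i_0}$ is contained in $N_Z\cap N_{Y_{i_0}}$ and all its complementary components meet $Y_{i_0}$, so a component of its preimage in the universal cover of $N_{Y_{i_0}}$ is simply connected, and the nullhomotopy descends there and then to $N_Z$.

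Thus the main obstacle is to build a single path $\alpha$ in $U$ that is associated to the whole coherent family $(\widehat F_{Y_i})_{i\in I}$ at once: for one index $i$ there is a path associated to $\widehat F_{Y_i}$, but only in $N_{Y_i}\supset N_Y$, and its homotopy class has to be pushed into $N_Y$ while staying compatible with every $\widehat F_{Y_j}$. I would obtain $\alpha$ by fixing a compact exhaustion $U=\bigcup_n K_n$ by compact connected subsurfaces and correcting $\alpha$ successively: over $K_n$ only finitely many complementary regions of $M\setminus K_n$ matter, so --- the $Y_i$ forming a chain --- a single $Y_{i_n}$ governs that stage, and one adjusts the class of $\alpha$ on $K_n$ so as to make it associated to $\widehat F_{Y_{i_n}}$ (hence to every $\widehat F_{Y_j}$ with $Y_j\subset Y_{i_n}$) without undoing the earlier corrections; the limit is the desired $\alpha$. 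Once $\alpha$ is available in each component, proposition~\ref{Prop:PlusPrecis} together with its footnote provides the lift $\widehat F_Y$ with $(Y,\widehat F_Y)\in\mc R$, remark~\ref{Rem:R03} gives $(Y_i,\widehat F_{Y_i})\pprec(Y,\widehat F_Y)$ for all $i$, and Zorn's lemma furnishes a maximal element of $\mc S$, which is the statement of the proposition.
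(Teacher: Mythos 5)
Your skeleton matches the paper's: Zorn's lemma on the upper bounds of $(X,\widehat F_X)$, the closure $Y=\overline{\bigcup_i Y_i}$, the inclusion clause checked via remark~\ref{Rem:R01}, the reduction to one adapted path per component of $N_Y$ via proposition~\ref{Prop:PlusPrecis}, and the detection of adaptedness on finite subsets via lemma~\ref{Le:T1} (the paper reduces a general chain to a strictly increasing countable one by separability of $M$ and then invokes proposition~\ref{Prop:Zorn3}).

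The genuine gap is in your last paragraph, and it sits exactly where the paper locates ``l'essentiel des difficult\'es de l'article''. You need, in each component $U$ of $N_Y$, a single path $\alpha$ from $z$ to $F(z)$, \emph{contained in $U$}, associated to $\widehat F_{Y_i}$ for every $i$, and two assertions are left unproved. First, that at each stage of your exhaustion there is a path lying in $K_n$ (or even just in $U$) associated to $\widehat F_{Y_{i_n}}$: such a path certainly exists somewhere in $N_{Y_{i_n}}$, but $N_{Y_{i_n}}$ is much larger than $U$, and the claim that its class can be represented inside $U$ --- equivalently, that $\widehat F_{Y_{i_n}}$ sends the relevant component $\widehat M^n_n$ of the preimage of the compact piece into itself --- is precisely the paper's lemma~\ref{Le:P5}, whose proof is a nontrivial argument on deck transformations stabilizing the boundary lines of $\widehat M_m^m$ (the possibility $\widehat S(\widehat M_m^m)\neq\widehat M_m^m$ has to be excluded by hand). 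Second, the claim that your successive corrections stabilize and that ``the limit is the desired $\alpha$'' is unjustified: nothing prevents the correction at stage $n+1$ from modifying $\alpha$ deep inside $K_n$, so the sequence of paths need not converge, and an infinite concatenation of corrections is not a path. The paper avoids any limiting process: lemma~\ref{Le:NP4} shows that one homotopy $\phi_m$, hence one path $\alpha_m$, is already associated to \emph{every} $\widehat F_n$, and proving this again requires the commutation $\widehat U\circ\widehat T=\widehat T\circ\widehat U$ in the deck group of $N'_\infty$ together with the fact that $(\alpha_n.\alpha_m^-)^k$ freely homotopic to $\Gamma^l$ forces $l=0$ and $\widehat T=\id$. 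Without these two lemmas your construction of $\alpha$ does not go through, and they are the technical heart of the proposition.

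A smaller omission: your argument yields $(Y_{i_n},\widehat F_{Y_{i_n}})\pprec(Y,\widehat F_Y)$ only for the indices selected by the exhaustion and, by transitivity, for the $Y_j$ contained in some $Y_{i_n}$. For a $Y_j$ containing all the $Y_{i_n}$ you need the paper's closing observation: such a $Y_j$ would contain $\overline{\bigcup_n Y_{i_n}}=Y$, hence $(Y_j,\widehat F_{Y_j})$ would already be a maximal element of the chain, a case one may assume excluded from the start.
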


\PropBis{existmax}
\begin{Propexistmax}
 Soient $F \in \Homeo_*(M)$ et $(X,\widehat F_X) \in \mc R'$.
 Alors il existe $(Y,\widehat F_Y) \in \mc R'$ maximal dans $\mc R'$ vérifiant $(X,\widehat F_X) \pprec (Y,\widehat F_Y)$.
\end{Propexistmax}

Pour prouver le théorème \ref{Th:princ}, il reste à montrer que les relèvements maximaux permettent l'utilisation du théorème \ref{Th:Brouwer} c'est-à-dire sont sans point fixe.

\begin{Prop} \label{Prop:MaximaFaible}
 Soit $(X, \widehat F_X) \in \mc R$.
 S'il existe un point $y \in \Fix(F) \setminus X$ dont les relevés à $\widehat N_X$ sont des points fixes de $\widehat F_X$, alors $(X,\widehat F_X)$ n'est pas maximal dans $(\mc R,\pprec)$.
 
 Plus précisément, si l'on note $Y = X \cup \{y\}$, il existe $\bigl(Y,\widehat F_Y\bigr) \in \mc R$ vérifiant $\bigl(X,\widehat F_X\bigr) \pprec \bigl(Y,\widehat F_Y\bigr)$.
\end{Prop}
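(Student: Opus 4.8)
On établirait directement la seconde assertion ; la première s'en déduit, car $y\notin X$ donne $Y\neq X$, donc $(X,\widehat F_X)\neq(Y,\widehat F_Y)$ d'après la remarque~\ref{Rem:R02}. L'idée est de revenir à une isotopie restreinte puis de la modifier. On choisirait, grâce à la remarque~\ref{Rem:UniCommu2}, une isotopie restreinte $(X,I)\in\mc I$, $I=(F_t)_{t\in[0,1]}$, dont le relèvement associé soit précisément $(X,\widehat F_X)$. En relevant $I$ à $\widehat N_X$ en une isotopie $(\widehat F_t)_{t\in[0,1]}$ issue de l'identité, de temps un $\widehat F_X$, on observe que pour un relevé $\widehat y$ de $y$ la trajectoire $t\mapsto\widehat F_t(\widehat y)$ joint $\widehat y$ à $\widehat F_X(\widehat y)=\widehat y$ par hypothèse : c'est un lacet, qui se projette sur le lacet $I(y)$ ; comme $\widehat N_X$ est simplement connexe, $I(y)$ est contractile dans $N_X$.

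\textbf{Étape clé : modifier $I$ pour fixer $y$.} On construirait alors une isotopie $I'$ entre $\id_{N_X}$ et $F_{|N_X}$, de même relèvement associé $\widehat F_X$, mais fixant $y$ à tout instant. L'espace $N_X$ étant une variété, l'évaluation $\Homeo(N_X)\to N_X$, $h\mapsto h(y)$, est une fibration ; le lacet $I(y)$ étant contractile dans $N_X$, il se relève à travers cette fibration en un \emph{lacet} $(h_t)_{t\in[0,1]}$ basé en l'identité, c'est-à-dire $h_0=h_1=\id$ et $h_t(y)=F_t(y)$ pour tout $t$ (\og point-pushing \fg{} le long d'un lacet contractile). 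On poserait $I'=(h_t^{-1}\circ F_t)_{t\in[0,1]}$, de sorte que $I'_0=\id_{N_X}$, $I'_1=F_{|N_X}$ et $I'_t(y)=h_t^{-1}(F_t(y))=y$. Le point délicat — et l'essentiel de la preuve — est que $\widehat F_{I'}=\widehat F_X$ : on relèverait $(h_t)$ à $\widehat N_X$ en une isotopie $(\widehat h_t)_{t\in[0,1]}$ issue de l'identité ; alors $\widehat h_1$ relève $h_1=\id_{N_X}$, c'est un automorphisme du revêtement, et $\widehat h_1(\widehat y)$ est l'extrémité du relevé, issu de $\widehat y$, du lacet contractile $t\mapsto h_t(y)=I(y)$ ; ce relevé étant un lacet, $\widehat h_1(\widehat y)=\widehat y$, d'où $\widehat h_1=\id$ par \emph{liberté} de l'action des automorphismes de revêtement. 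On en déduit que le relevé de $(h_t^{-1})$ issu de l'identité est $\bigl((\widehat h_t)^{-1}\bigr)_{t\in[0,1]}$, de temps un l'identité, puis que le relevé de $I'=(h_t^{-1}\circ F_t)$ issu de l'identité est $\bigl((\widehat h_t)^{-1}\circ\widehat F_t\bigr)_{t\in[0,1]}$, de temps un $\id\circ\widehat F_X=\widehat F_X$.

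\textbf{Conclusion.} Puisque $I'$ fixe $y$ ainsi que tous les points de $X$, sa restriction à $N_Y=N_X\setminus\{y\}$ est une isotopie entre $\id_{N_Y}$ et $F_{|N_Y}$ : le couple $(Y,I'_{|N_Y})$ appartient à $\mc I$, et l'on noterait $(Y,\widehat F_Y)\in\mc R$ son relèvement associé, qui commute avec les automorphismes du revêtement universel de $N_Y$. On vérifierait enfin $(X,\widehat F_X)\pprec(Y,\widehat F_Y)$. La première condition de la définition~\ref{Def:RO3} est immédiate : $X\subset Y=X\cup\{y\}$, et $y\in\widehat\pi_X\bigl(\Fix(\widehat F_X)\bigr)$ par hypothèse. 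Pour la seconde, on invoquerait la remarque~\ref{Rem:R03} : il suffit de produire, dans chaque composante connexe $U$ de $N_Y$, un chemin associé à la fois à $\widehat F_Y$ et à $\widehat F_X$. En choisissant $x\in U$, la trajectoire $I'(x):t\mapsto I'_t(x)$ joint $x$ à $F(x)$ et reste dans $N_Y$ (on a $I'_t(x)\neq y$ puisque $x\neq y$, que $I'_t$ est injectif et que $I'_t(y)=y$) ; elle est associée à $\widehat F_Y$ (relever $I'_{|N_Y}$ à $\widehat N_Y$ issu de l'identité) et à $\widehat F_X$ (relever $I'$ à $\widehat N_X$ issu de l'identité, de temps un $\widehat F_X$). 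Cela fournit la seconde condition, donc $(X,\widehat F_X)\pprec(Y,\widehat F_Y)$, ce qui conclut.

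\textbf{Difficulté principale.} Le seul point non routinier est l'étape clé : modifier l'isotopie de façon à fixer $y$ sans perturber le relèvement associé. L'existence de l'isotopie de recollement $(h_t)$ relève de résultats classiques sur le groupe des homéomorphismes d'une surface ; que $\widehat F_X$ soit inchangé repose de façon cruciale sur la liberté de l'action des automorphismes de revêtement, qui contraint le temps un du relevé de $(h_t)$ à être l'identité.
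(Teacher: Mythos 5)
Your proposal is correct and follows essentially the same route as the paper: pass from the lift $(X,\widehat F_X)$ to a restricted isotopy via Epstein's theorem (Remark~\ref{Rem:UniCommu2}), modify that isotopy so that it fixes $y$ without changing the associated lift, restrict to $N_Y$, and check the two conditions of $\pprec$ using the trajectories of the new isotopy together with Remark~\ref{Rem:R03}. The one place where you genuinely diverge is the key middle step. The paper delegates it to Proposition~\ref{PropTh:XyNonEnlace}, proved by hand in Section~\ref{sec:contrac} by subdividing a null-homotopy of $I(y)$ into small squares and composing disk-pushing isotopies (Lemma~\ref{Le:PPhi}); you instead invoke the evaluation fibration $\Homeo(N_X)\to N_X$ and the triviality of point-pushing along a contractible loop. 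That is a legitimate repackaging of the same fact --- the local sections needed for the fibration are exactly the maps of Lemma~\ref{Le:PPhi} --- but be aware that the author singles out precisely this step as lacking a complete reference in the literature, which is why Section~\ref{sec:contrac} exists: your appeal to \emph{classical results on homeomorphism groups of surfaces} is doing real work. Two smaller points. First, path-lifting through the fibration only gives a path $(g_t)$ with $g_0=\id$, $g_t(y)=F_t(y)$ and $g_1$ in the stabilizer of $y$, isotopic to the identity \emph{within} that stabilizer because $I(y)$ is contractible; it does not give $g_1=\id$ on the nose, so you must set $h_t=g_t\circ u_t$ with $(u_t)$ a path in the stabilizer from $\id$ to $g_1^{-1}$, after which your formula $I'=(h_t^{-1}\circ F_t)$ goes through unchanged. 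Second, your careful verification that $\widehat F_{I'}=\widehat F_X$ can be shortened as in the paper: $\widehat F_{I'}$ and $\widehat F_X$ are two lifts of $F_{|N_X}$ that both fix a lift of $y$, and two lifts agreeing at one point coincide.
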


\PropBis{MaximaFaible}

\begin{PropMaximaFaible}
 Soit $(X, \widehat F_X) \in \mc R'$.
 S'il existe un point $y \in \Fix(F) \setminus X$ dont les relevés à $\widehat N_X$ sont des points fixes de $\widehat F_X$, alors $(X,\widehat F_X)$ n'est pas maximal dans $(\mc R',\pprec)$.
 
 Plus précisément, si l'on note $Y = X \cup \{y\}$, il existe $\bigl(Y,\widehat F_Y\bigr) \in \mc R'$ vérifiant $\bigl(X,\widehat F_X\bigr) \pprec \bigl(Y,\widehat F_Y\bigr)$.
\end{PropMaximaFaible}

\begin{Rem}
 Comme $\widehat F_X$ commute avec les automorphismes du revêtement universel $\widehat\pi_X : \widehat N_X \rightarrow N_X$, s'il fixe un relevé à $\widehat N_X$ d'un point $y \in \Fix(F) \setminus X$, il les fixe tous.
\end{Rem}

\begin{Rem}
 Considérons un relèvement $(X, \widehat F_X) \in \mc R'$.
 D'après la proposition \ref{Prop:MaximaFaible} (respectivement \refbis{Prop:MaximaFaible}), le relèvement $(X, \widehat F_X)$ est maximal dans $\mc R$ (resp. dans $\mc R'$) si et seulement si $\widehat F_X$ est sans point fixe.
 Ainsi $(X, \widehat F_X)$ est maximal dans $\mc R$ si et seulement s'il est maximal dans $\mc R'$.
\end{Rem}

Avant de démontrer les propositions \ref{Prop:MaximaFaible} et \refbis{Prop:MaximaFaible} dans le paragraphe suivant, nous allons prouver le théorème \ref{Th:princ}.
Nous nous appuyons pour cela sur les propositions \refbis{Prop:MaximaFaible} démontrée ci-après et \refbis{Prop:existmax} démontrée dans la partie \ref{sec:induc}.
Signalons que pour obtenir la troisième condition dans le théorème \ref{Th:princ}, nous utilisons ces propositions et non pas les propositions \ref{Prop:MaximaFaible} et \ref{Prop:existmax} ; nous donnerons cependant une preuve de l'ensemble de ces résultats.

\begin{proof}[Démonstration du théorème \ref{Th:princ}]
 Considérons $(\emptyset,\widetilde F) \in \mc R'$ où $\widetilde F$ est le relèvement associé à $I$ de $F$ au revêtement universel $\widetilde M$ de $M$.
 D'après la proposition \refbis{Prop:existmax}, il existe $(X,\widehat F_X) \in \mc R'$ maximal et vérifiant $(\emptyset,\widetilde F) \pprec (X,\widehat F_X)$, ce qui signifie que tout chemin associé à $\widehat F_X$ est aussi associé à $\widetilde F$.
 De plus, d'après la proposition \refbis{Prop:MaximaFaible}, $\widehat F_X$ n'a pas de point fixe.
 Enfin, la troisième assertion du théorème repose sur la définition de $\mc R'$.
 En particulier, l'affirmation (a) est la traduction de la propriété $X \subset \left( Y \cup \widehat\pi_Y(\Fix(\widehat F_Y)) \right)$.
\end{proof}

\subsection{Éléments maximaux de $(\mc R, \pprec)$ et $(\mc R', \pprec)$}

L'objet de ce paragraphe est la preuve des propositions \ref{Prop:MaximaFaible} et \refbis{Prop:MaximaFaible}.
Celle-ci s'appuie sur la proposition \ref{PropTh:XyNonEnlace} qui découle du lemme d'Alexander mais dont nous donnerons une démonstration complète dans la partie \ref{sec:contrac}.
Il s'agit d'une formulation de la proposition \ref{Prop:MaximaFaible} dans le langage des isotopies restreintes.
Pour déduire les propositions \ref{Prop:MaximaFaible} et \refbis{Prop:MaximaFaible} de la proposition \ref{PropTh:XyNonEnlace}, nous allons utiliser le théorème d'Epstein (proposition \ref{PropDef:123}, $\Pro3 \Rightarrow \Pro1$).
Signalons que notre preuve du théorème \ref{Th:princ} fait appel au théorème d'Epstein uniquement pour ces preuves des propositions \ref{Prop:MaximaFaible} et \refbis{Prop:MaximaFaible}.

\begin{Prop} \label{PropTh:XyNonEnlace}
 Soient $F \in \Homeo_*(M)$ et
 $(X,I) \in \mc I$ une isotopie restreinte.
 Soit $y \in \Cont(I) \setminus \Fix(I)$.
 Alors il existe une isotopie restreinte $(X,I') \in \mc I$ fixant $y$ avec $I'$ homotope à $I$ relativement au complémentaire d'un compact de $N_X$.
\end{Prop}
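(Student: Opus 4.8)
Write $\Gamma=I(y)$, $\Gamma(t)=F_t(y)$, for the trajectory of $y$: as $y\in\Fix(F)$ it is a loop based at $y$, and as $y\in\Cont(I)\setminus\Fix(I)$ it is contractible rel $y$ inside $N_{X'}$, where $X':=X\cup\Fix(I)$ and $N_{X'}:=M\setminus X'$ (note $X'\subset\Fix(F)$ and $X'$ is closed in $M$, a limit in $N_X$ of points of $\Fix(I)$ lying again in $\Fix(I)$). Fix a based null-homotopy $(\Gamma_s)_{s\in[0,1]}$ of $\Gamma$ in $N_{X'}$, with $\Gamma_0=\Gamma$ and $\Gamma_1$ the constant loop at $y$. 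The idea is to obtain $I'$ as a \emph{right}-composition $I'=I\circ J$, where $J=(G_t)_{t\in[0,1]}$ is a loop of homeomorphisms of $N_X$ based at $\id_{N_X}$, supported in a relatively compact open subset $U$ of $N_{X'}$, with $G_t(y)=F_t^{-1}(y)$ for every $t$, and null-homotopic rel endpoints inside the group $\mc G$ of homeomorphisms of $N_X$ equal to the identity outside $U$. Granting such a $J$: the isotopy $I'=(F_t\circ G_t)_t$ runs from $\id_{N_X}$ to $F_{|N_X}$, so $(X,I')\in\mc I$; it fixes $y$, since $F_t(G_t(y))=F_t(F_t^{-1}(y))=y$; and any null-homotopy $k\colon[0,1]^2\to\mc G$ of the loop $J$ gives a homotopy $(s,t)\mapsto F_t\circ k(s,t)$ from $I'$ to $I$ which at points of $N_X\setminus U$ is independent of $s$, hence relative to the complement of the compact set $\overline U\subset N_X$. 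Composing on the right is essential here: a left-composition $J'\circ I$ would only be relative to the $t$-dependent sets $N_X\setminus F_t^{-1}(U)$.

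\textbf{Step 1.} The loop that $J$ must realize is $\Gamma^-\colon t\mapsto F_t^{-1}(y)$, based at $y$; I claim it too is contractible in $N_{X'}$. Each $F_t$ fixes $\Fix(I)$ pointwise, so $F_t^{-1}$ preserves $N_{X'}$, and $(s,t)\mapsto F_t^{-1}(\Gamma_s(t))$ is then a continuous based homotopy in $N_{X'}$ equal to the constant loop at $y$ for $s=0$ (because $F_t^{-1}(F_t(y))=y$) and equal to $\Gamma^-$ for $s=1$ (because $\Gamma_1\equiv y$). Write $(\Gamma^-_s)_{s\in[0,1]}$ for this null-homotopy and let $U$ be a relatively compact open subset of $N_{X'}$ containing its compact image, so that $\Gamma^-$ is contractible in $U$.

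\textbf{Step 2.} By the isotopy extension theorem (in its relative parametrized form), choose a continuous family $(\Phi^s_t)_{(s,t)\in[0,1]^2}$ in $\mc G$ with $\Phi^s_0=\id$, $\Phi^s_t(y)=\Gamma^-_s(t)$, and $\Phi^0_t=\id$ for all $t$ (extend the constant path $\Gamma^-_0$ by the identity). Restricting $(s,t)\mapsto\Phi^s_t$ to $\partial([0,1]^2)$ gives a loop in $\mc G$ based at $\id$ which is null-homotopic (it bounds the image of the square); since the sides $\{0\}\times[0,1]$ and $[0,1]\times\{0\}$ are constant, this produces a homotopy rel endpoints in $\mc G$ between the path $t\mapsto\Phi^1_t$ and the path $t\mapsto\Phi^t_1$, both running from $\id$ to $\Phi^1_1$, the latter consisting of homeomorphisms fixing $y$ (since $\Gamma^-_t(1)=y$). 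Put $G_t:=\Phi^1_t\circ(\Phi^t_1)^{-1}$: then $G_0=G_1=\id$, each $G_t$ is supported in $U$, $G_t(y)=\Phi^1_t(y)=\Gamma^-(t)=F_t^{-1}(y)$, and $(G_t)_t$ is null-homotopic rel endpoints in $\mc G$, because whenever two paths $a\simeq c$ rel endpoints in $\mc G$ the loop $t\mapsto a_t\circ c_t^{-1}$ is null-homotopic rel endpoints (transport the homotopy $a\simeq c$ through $c^{-1}$). This $J=(G_t)_t$ has all the required properties, completing the proof.

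\textbf{Main obstacle.} Morally the proposition is a repackaging of Alexander's lemma, and the only non-formal input is the relative parametrized isotopy extension theorem used to build the square $(\Phi^s_t)$; everything else is homotopy bookkeeping in the topological group $\mc G$ --- tracking which maps fix $y$, which are supported in $U$, and which homotopies are relative to endpoints. The delicate point is to arrange all this so that the final homotopy between $I'$ and $I$ remains relative to a single fixed compact subset of $N_X$, which is precisely what dictates composing with $J$ on the right; a fully self-contained treatment must in addition unpack the isotopy extension step by hand in dimension two.
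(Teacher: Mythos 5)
Your argument is correct in substance, but it takes a different route from the paper's and, crucially, it black-boxes the step that the paper regards as the whole point of this section. The paper contracts the square bounding $I(y)$ by hand: it subdivides $K=[0,1]^2$ into $n^2$ small squares each mapped by the null-homotopy into a disk, peels them off one at a time, and realizes each peeling by an isotopy supported in the corresponding disk (Lemma \ref{Le:PPhi}), finally composing all of these on the \emph{left} of $I$. You instead contract the reversed loop $t\mapsto F_t^{-1}(y)$, lift the entire contracting square through the evaluation map $\mathcal G\to U$, $\phi\mapsto\phi(y)$, by invoking the parametrized isotopy extension theorem, and compose on the right; the subsequent bookkeeping in $\mathcal G$ (the square giving a homotopy rel endpoints between $t\mapsto\Phi^1_t$ and $t\mapsto\Phi^t_1$, hence null-homotopy of $G_t=\Phi^1_t\circ(\Phi^t_1)^{-1}$) is correct, as is the verification that $I'=(F_t\circ G_t)_t$ fixes $y$ and is homotopic to $I$ relative to the complement of $\overline U$. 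Two remarks. First, the invoked "relative parametrized isotopy extension" for a point in a surface amounts to the statement that the evaluation map is a locally trivial fibration; its local triviality is exactly the paper's Lemma \ref{Le:PPhi}, and the lifting over the square is exactly what the subdivision argument carries out by hand. Since the paper states explicitly that it includes a detailed proof precisely because no complete one is available in the literature, you should either cite a precise reference for the parametrized statement or unpack it (as you acknowledge); as written, the main technical content of the proposition is deferred. Second, your claim that right-composition is essential is not accurate: with the paper's left-composition $I'=J\circ I$, $J$ supported in a compact $L\subset N_X$, the homotopy is relative to the complement of $\bigcup_{t}F_t^{-1}(L)$, which is still a single compact subset of $N_X$ (continuous image of $[0,1]\times L$ under $(t,w)\mapsto F_t^{-1}(w)$). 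What your variant does buy, by supporting everything in $N_X\setminus\Fix(I)$, is that $I'$ additionally fixes all of $\Fix(I)$, a slight strengthening of the stated conclusion.
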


\begin{proof}[Démonstration de la proposition \ref{Prop:MaximaFaible}]
 Remarquons d'abord qu'il suffit de s'intéresser à la composante connexe $N$ de $N_X$ qui contient le point $y$.
 En effet, les autres composantes connexes sont inchangées lorsque l'on enlève $y$ et leur revêtement universel est également inchangé.
 Pour simplifier les notations, nous supposons donc $N_X$ connexe.
 
 Utilisons le théorème d'Epstein.
 D'après la proposition \ref{PropDef:123} et la remarque \ref{Rem:UniCommu2}, il existe une isotopie restreinte $(X,I_X)$ telle que $(X,\widehat F_X)$ soit associé à $(X,I_X)$.
 D'après la proposition \ref{PropTh:XyNonEnlace}, on peut modifier $(X,I_X)$ pour obtenir une isotopie $(X,J_Y)$ qui fixe $y$.
 Considérons le relèvement $(X,\widehat F_{J_Y}) \in \mc R$ associé à l'isotopie restreinte $(X,J_Y)$.
 Comme $\widehat F_{J_Y}$ est un relèvement de $F_{|N_X}$ qui fixe les relevés de $y$, c'est $\widehat F_X$.

 Notons $I_Y$ la restriction de $J_Y$ à $N_Y$ et
 considérons le relèvement $(Y, \widehat F_Y) \in \mc R$ associé à l'isotopie restreinte $(Y,I_Y)$.
 Il reste à vérifier que l'on a bien $(X, \widehat F_X) \pprec (Y, \widehat F_Y)$ :
 \begin{enumerate}
  \item
  par hypothèse, $Y = X \cup \{y\}$ vérifie $X \subset Y \subset
  \Bigl( X \cup \widehat\pi_X \bigl( \Fix(\widehat F_X) \bigr) \Bigr)$ ;
  \item
  soit $z \in N_Y$.
  Considérons le chemin $I_Y(z)$ qui n'est autre que $J_Y(z)$.
  Il est donc associé à $\widehat F_Y$ mais aussi à $\widehat F_X$, ce qui achève la démonstration d'après la remarque \ref{Rem:R03}.\qedhere
 \end{enumerate}
\end{proof}

Nous pouvons également démontrer facilement la proposition \refbis{Prop:MaximaFaible} dans le cas particulier où $X$ est totalement discontinu.
Cette situation est relativement simple car $Y$ est alors fortement non enlacé (voir la remarque~\ref{Rem:NEF}).

\begin{proof}[Démonstration de la proposition \refbis{Prop:MaximaFaible} lorsque $X$ est totalement discontinu]
D'après la proposition \ref{PropDef:123}, il existe une isotopie restreinte $(X, I_X) \in \mc I$ associée au relèvement $(X, \widehat F_X)$.
Comme les points de $X$ sont isolés, cette isotopie se prolonge en une isotopie $I$ définie sur $M$ et fixant les points de~$X$.
Par hypothèse, les relevés de $y$ à $\widehat N_X$ sont des points fixes de $\widehat F_X$.
D'après la proposition \ref{PropTh:XyNonEnlace} on peut modifier $I$ en une isotopie $J$ qui fixe les points de $Y$.
À la restriction à $N_{Y}$ de l'isotopie $J$ est associé un relèvement $(Y, \widehat F_Y) \in \mc R$.

Montrons que $(Y, \widehat F_Y)$ appartient à $\mc R'$.
En effet, pour tout sous-ensemble $Z \subset Y$, la restriction de $J$ à $N_Z$ permet de définir un relèvement \mbox{$(Z, \widehat F_Z) \in \mc R$} vérifiant $(Z, \widehat F_Z) \pprec (Y, \widehat F_Y)$ puisque pour tout point $z \in N_Y$, le chemin $J(z)$ est associé à $\widehat F_Y$ et à $\widehat F_Z$.
\end{proof}

Intéressons-nous maintenant à la proposition \refbis{Prop:MaximaFaible} dans le cas où $X$ n'est pas totalement discontinu.
Pour la démontrer, nous allons utiliser les lemmes suivants :

\begin{Lemme} \label{Le:T2}
Soient $(Z, \widehat F_Z)$ et $(Y, \widehat F_Y)$ deux éléments de $\mc R$ vérifiant $Z \subset Y \subset \Bigl(Z \cup \,\widehat\pi_Z\bigl(\Fix(\widehat F_Z)\bigr)\Bigr)$.
On suppose de plus que l'ensemble $Z$ contient un point $y$ isolé dans $Y$ et que les composantes connexes de $N_Z$ et de $N_Y$ qui contiennent $y$ dans leur adhérence ne sont homéomorphes ni à l'anneau ouvert, ni au disque.

Alors, tout anneau $A \subset N_Y$ entourant $y$ (c'est-à-dire tel que $A \cup \{y\}$ soit un disque) contient des chemins associés à $\widehat F_Y$.
Ces chemins sont de plus associés à $\widehat F_Z$.
Dans le cas particulier où $N_Y$ est connexe, cela signifie que l'on a $(Z, \widehat F_Z) \pprec (Y, \widehat F_Y)$.
\end{Lemme}

\begin{proof}
D'après la proposition \ref{PropDef:123}, on sait qu'il existe une isotopie $(Z,I_Z)$  telle que $(Z, \widehat F_Z)$ soit associé à $(Z,I_Z)$ et une isotopie $(Y,I_Y)$ telle que $(Y, \widehat F_Y)$ soit associé à $(Y,I_Y)$.
Comme $y$ est un point de $Z$ isolé dans $Y$ donc dans $Z$, le lacet $\beta = \bigl(I_Y.(I_Z)^- \bigr)(w)$ est contenu dans $A$ si l'on choisit $w$ suffisamment proche de $y$.
Choisissons un tel point $w$ 
puis un lacet $\Gamma \subset N_Y$ basé en $w$ et non homotope à zéro dans $N_Y \cup \{y\}$.
Si l'on suppose que $N_Y$ n'est homéomorphe ni à l'anneau ni au disque, un tel choix est possible puisque $N_Y \cup \{y\}$ ne peut alors être homéomorphe ni au disque ni à la sphère.

On note $C_Z$ la composante connexe de $N_Z$ dont l'adhérence contient $y$ ; elle contient également $w$.
On note également $\widehat\pi : \widehat C_Z \to C_Z$ le revêtement universel de $C_Z$ et $\widehat G$ le groupe des automorphismes de revêtement.
Choisissons un relèvement $\widehat w \in \widehat C_Z$ de $w$ et notons $\widehat\beta$ et $\widehat\Gamma$ les relèvements de $\beta$ et $\Gamma$ issus de $\widehat w$.
L'extrémité de $\widehat\beta$ relève $w$ et s'écrit donc sous la forme $\widehat T(\widehat w)$ avec $\widehat T \in \widehat G$.
De même, l'extrémité de $\widehat\Gamma$ s'écrit $\widehat U(\widehat w)$ avec $\widehat U \in \widehat G$.
Remarquons maintenant que $I_Y$ définit dans $N_Y$, donc dans $N_Z$, et plus précisément dans $C_Z$, une homotopie entre $\Gamma$ et $F(\Gamma)$.
L'isotopie $(I_Z)^-$ définit ensuite une homotopie entre $F(\Gamma)$ et $\Gamma$ toujours dans $C_Z$.
En relevant à $\widehat C_Z$ ces deux homotopies successives, on obtient une homotopie libre entre $\widehat\Gamma$ et $\widehat T(\widehat\Gamma)$.
Le long de cette homotopie, l'origine de $\widehat\Gamma$ décrit $\widehat\beta$ et son extrémité $\widehat U(\widehat w)$ décrit $\widehat U(\widehat\beta)$ d'extrémité $\widehat U \circ \widehat T(\widehat w)$.
Or l'extrémité de $\widehat T(\widehat\Gamma)$ est $\widehat T \circ \widehat U(\widehat w)$ et on obtient ainsi $\widehat U \circ \widehat T(\widehat w) = \widehat T \circ \widehat U(\widehat w)$ donc $\widehat U \circ \widehat T = \widehat T \circ \widehat U$.

Comme l'on suppose que $C_Z$ n'est homéomorphe ni au disque, ni à l'anneau, on en déduit que $\widehat U$ et $\widehat T$ appartiennent à un même sous-groupe monogène infini de $\widehat G$.
Il existe donc $(k,l) \in \Z^2 \setminus \{(0,0)\}$ tel que $\widehat T^k = \widehat U^l$ ce qui signifie que $\beta^k$ et $\Gamma^l$ sont librement homotopes dans $C_Z$ donc dans $C_Z \cup \{y\}$.
Or $\beta$ est homotope à zéro dans $C_Z \cup \{y\}$ puisque l'on a choisi $w$ de sorte que $\beta$ soit contenu dans l'anneau $A$ autour de $y$.
En revanche, $\Gamma^l$ n'est homotope à zéro dans $C_Z \cup \{y\}$ que si $l = 0$ (\cite{Epstein66}, Lemma 4.3) auquel cas $\widehat T^k$ est l'identité avec $k \neq 0$ donc $\widehat T$ est l'identité.
Ainsi le lacet $\beta$ est homotope à zéro dans $C_Z$.
On en déduit que $I_Y(w)$, qui est associé à $\widehat F_Y$, est homotope à extrémités fixées dans $C_Z$ à $I_Z(w)$ lui-même associé à $\widehat F_Z$.
Ainsi $I_Y(w)$ est associé à $\widehat F_Y$ et à $\widehat F_Z$.
Par connexité de l'anneau $A$, tout chemin contenu dans $A$ et associé à $\widehat F_Y$ sera également associé à $\widehat F_Z$ ce qui achève la démonstration. 
\end{proof}

Nous allons également utiliser un lemme topologique qui repose sur des arguments du type lemme d'Alexander :

\begin{Lemme}\label{Le:Alex}
Soit $D$ un disque ouvert contenu dans un ouvert $N$ d'une surface $M$.
Soient $y$ et $y'$ deux points de $D$.
Soient $\alpha$ et $\beta$ deux chemins contenus dans $N \setminus \overline D$, ayant même origine $z$ et même extrémité $z'$.

Si $\alpha$ et $\beta$ sont homotopes à extrémités fixées dans $N \setminus \{y'\}$, alors ils sont homotopes à extrémités fixées dans $N \setminus \{y\}$.
\end{Lemme}

\begin{proof}
Soit $\phi : [0,1]^2 \to N$ une homotopie entre $\alpha$ et $\beta$, à valeurs dans $N \setminus \{y'\}$ c'est-à-dire vérifiant :
\begin{itemize}
 \item pour tout $t \in [0,1]$, $\phi(0,t) = \alpha(t)$ et $\phi(1,t) = \beta(t)$ ;
 \item pour tout $s \in [0,1]$, $\phi(s,0) = z$ et $\phi(s,1) = z'$.
\end{itemize}
Comme $\overline D$ est fermé, et $\alpha$, $\beta$ sont contenus dans son complémentaire, il existe $\epsilon \in ]0,1[$ tel que pour tout $(s,t) \in ([0,\epsilon] \cup [1-\epsilon,1])\times[0,1]$ l'on ait $\phi(s,t) \notin \overline D$.
Par ailleurs, on peut construire une isotopie $(\psi_s)_{s \in [0,1]}$ entre l'identité $\id_N$ et elle-même, à support dans $D$, vérifiant :
\[
\forall s \in [\epsilon,1-\epsilon] \qquad 
\psi_s(y') = y
\]
Une construction plus détaillée de ce type d'application figure dans la partie~\ref{sec:contrac}.
L'application $(s,t) \mapsto \psi_s\bigl(\phi(s,t)\bigr)$ réalise une homotopie entre~$\alpha$ et~$\beta$ dans $N \setminus \{y\}$ car $\phi(s,t)$ ne vaut jamais $y'$ donc $\psi_s\bigl(\phi(s,t)\bigr)$ n'est jamais~$y$.
\end{proof}

Nous supposons ici que $X$ n'est pas totalement discontinu, et en particulier que $X$ est infini.
Dans la preuve de la proposition \ref{Prop:MaximaFaible}, nous avons en général construit le seul relèvement $(Y,\widehat F_Y)$ possible sauf dans un cas bien particulier : lorsque la composante $C_X$ de $N_X$ contenant $y$ est un disque.
En effet, dans ce cas, le groupe des automorphismes de revêtement de $C_Y = C_X \setminus \{y\}$ n'est pas de centre trivial (voir remarque \ref{Rem:UniCommu}).

Une première étape de la preuve va consister à choisir $\widehat F_Y$ convenablement lorsque $C_X$ est un disque ; ce relèvement $\widehat F_Y$ sera fourni par la proposition qui suit.

\begin{Prop} \label{Prop:T3}
Soient $\bigl(X, \widehat F_X\bigr) \in \mc R'$ et $y \in \Fix(F) \setminus X$ tel que les relevés de $y$ à $\widehat N_X$ soient des points fixes de $\widehat F_X$.
On suppose $X$ de cardinal supérieur ou égal à $2$, on note $C_X$ la composante connexe de $N_X$ contenant $y$ et $C_Y = C_X \setminus \{y\}$.

Alors, il existe un unique relèvement $(Y, \widehat F_Y) \in \mc R$ vérifiant :
\begin{enumerate}
 \item $\bigl(X, \widehat F_X\bigr) \pprec \bigl(Y, \widehat F_Y\bigr)$ ;
 \item pour tout sous-ensemble fini $Z_1 \subset X$ de cardinal supérieur ou égal à $2$, en notant $Z = Z_1 \cup\{y\}$ et $(Z, \widehat F_Z) \in \mc R$ l'unique relèvement correspondant, tout chemin de $C_Y$ associé à $\widehat F_Y$ est associé à $\widehat F_Z$.
\end{enumerate}
 \end{Prop}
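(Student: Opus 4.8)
The strategy is to first handle the existence of $(Y,\widehat F_Y)$ by a compatible choice dictated by the finite subsets of $X$, then to verify uniqueness via the characterization coming from $\mc R'$. Since the difficulty is concentrated near~$y$, I would reduce everything to the components $C_X$ and $C_Y$ as in the proof of Proposition~\ref{Prop:MaximaFaible}; on the other components of $N_X = N_Y$ nothing changes and the restriction of $\widehat F_X$ provides the required relèvement. For the component $C_Y$, I would start from an isotopie restreinte $(X,I_X)$ with $(X,\widehat F_X)$ associée à $(X,I_X)$ (proposition~\ref{PropDef:123} and remarque~\ref{Rem:UniCommu2}); since $y \in \Fix(F)\setminus X$ has its lifts fixed by $\widehat F_X$, the point $y$ is in $\Cont(I_X)\setminus\Fix(I_X)$, so proposition~\ref{PropTh:XyNonEnlace} lets me modify $I_X$ into an isotopie $(X,J_Y)$ fixing~$y$, homotope à $I_X$ relativement au complémentaire d'un compact. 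Restricting $J_Y$ to $N_Y$ gives an isotopie restreinte $(Y,I_Y)$ whose associated relèvement $(Y,\widehat F_Y)\in\mc R$ is the candidate. One checks $(X,\widehat F_X)\pprec(Y,\widehat F_Y)$ exactly as in proposition~\ref{Prop:MaximaFaible}: the trajectory $I_Y(z) = J_Y(z)$ is associé à $\widehat F_Y$ et à $\widehat F_X$ for each $z\in N_Y$, hence condition~(1) holds by remarque~\ref{Rem:R03}, and the inclusion $X\subset Y\subset X\cup\widehat\pi_X(\Fix(\widehat F_X))$ is immediate since $Y = X\cup\{y\}$.

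For condition~(2): given a finite $Z_1\subset X$ with $|Z_1|\geq 2$, set $Z = Z_1\cup\{y\}$, which has cardinal at least $3$, so $N_Z$ has trivial-center deck group and $(Z,\widehat F_Z)\in\mc R$ is the unique relèvement (remarque~\ref{Rem:UniCommu}). I would obtain this $(Z,\widehat F_Z)$ from the same isotopie $J_Y$, restricting it to $N_Z$: since $J_Y$ fixes $y$ and $Z_1\subset X$ consists of fixed points of $J_Y$ (the points of $X$ being fixed by $I_X$, hence by $J_Y$ away from the compact modification — here one must check $Z_1\subset\Fix(J_Y)$, which holds because $X\subset\Fix(I_X)$ and the modification of proposition~\ref{PropTh:XyNonEnlace} only affects a compact of $N_X$; shrinking if necessary one can arrange $J_Y$ to still fix all of $X$, or argue componentwise), the restriction of $J_Y$ to $N_Z$ is a genuine isotopie restreinte $(Z,I_Z')$ and its associated relèvement is $\widehat F_Z$ by uniqueness. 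Then for any chemin $\alpha\subset C_Y$ associé à $\widehat F_Y$, remarque~\ref{Rem:R03} reduces matters to a single such path, e.g.\ $J_Y(w) = I_Y(w)$ for $w\in C_Y$, which is simultaneously the trajectory along the restriction of $J_Y$ to $N_Z$, hence associé à $\widehat F_Z$. This gives condition~(2).

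The uniqueness is where I expect the real work. Suppose $(Y,\widehat F_Y)$ and $(Y,\widehat G_Y)$ both satisfy (1) and (2). Since off $C_Y$ everything is forced by $\widehat F_X$, I only need $\widehat F_Y = \widehat G_Y$ on the relevant component; the obstruction is exactly that the deck group of $C_Y = C_X\setminus\{y\}$ may have nontrivial center — precisely when $C_X$ is a disque, so $C_Y$ is an anneau. Fix $w\in C_Y$ and chemins $\alpha,\beta\subset C_Y$ issus de $w$ associés à $\widehat F_Y$ and $\widehat G_Y$ respectively; form $\Gamma = \alpha.\beta^-$, a lacet in $C_Y$ based at $w$. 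For every finite $Z_1\subset X$ with $|Z_1|\geq 2$, condition~(2) forces both $\alpha$ and $\beta$ to be associé à the same $\widehat F_Z$ (unique!), so $\Gamma$ is homotope à zéro dans $N_Z$. I would then push $Z_1$ to exhaust $X$: using Lemme~\ref{Le:T1} (any lacet of $N_X$ homotope à zéro in $N_Z$ for all finite $Z\subset X$ is homotope à zéro in $N_X$) — applied inside $C_Y$, treating $C_X$ as the ambient surface and $X\cap\overline{C_X}$ (with $y$ added) as the puncture set, possibly after enlarging small $Z_1$ to cardinal $3$ via $Z_1\subset Z_1'\subset X$ and using $N_{Z_1'}\subset N_{Z_1}$ as in proposition~\ref{Prop:UniRR} — one concludes $\Gamma$ is homotope à zéro dans $C_Y$, i.e.\ $\alpha$ and $\beta$ are homotopes à extrémités fixées dans $C_Y$, whence $\widehat F_Y = \widehat G_Y$. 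The only gap to fill is the case where $X$ restricted to the closure of $C_X$ is finite of small cardinal so that Lemme~\ref{Le:T1} does not directly bite; there one should use that the hypothesis $|X|\geq 2$ together with connectedness lets one choose the finite subsets large enough, or invoke Lemme~\ref{Le:Alex} to transport a homotopy avoiding one puncture to one avoiding another puncture lying in a common disque around $y$. That transport step, matching the "extra" puncture $y$ against the structure inherited from the $(Z,\widehat F_Z)$, is the main obstacle and is exactly what Lemme~\ref{Le:Alex} was proved for.
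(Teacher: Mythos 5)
Il y a une lacune r�elle dans la partie existence de votre argument. Vous construisez $(Y,\widehat F_Y)$ � partir d'une isotopie $J_Y$ obtenue en modifiant une isotopie restreinte $(X,I_X)$, puis vous affirmez obtenir $(Z,\widehat F_Z)$ pour $Z=Z_1\cup\{y\}$ en \emph{restreignant} $J_Y$ � $N_Z$. Mais $J_Y$ est d�finie sur $N_X=M\setminus X$, alors que $N_Z=M\setminus Z$ est strictement \emph{plus grand} que $N_X$ d�s que $X\setminus Z_1\neq\emptyset$ : il faudrait \emph{prolonger} $J_Y$ � travers $X\setminus Z_1$, pas la restreindre. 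Votre phrase �~les points de $X$ �tant fix�s par $I_X$~� est un contresens : dans une isotopie restreinte $(X,I_X)$, les points de $X$ sont retir�s du domaine, pas fix�s. Prolonger l'isotopie en fixant $X$ point par point, c'est exactement la question du non-enlacement fort (question \ref{Qu:NEF}), que l'article laisse ouverte ; elle n'est r�solue que lorsque $X$ est totalement discontinu (remarque \ref{Rem:NEF}), cas d�j� trait� s�par�ment avant que la proposition \ref{Prop:T3} ne devienne n�cessaire. La preuve de l'article proc�de dans le sens inverse : elle part du rel�vement fini $\widehat F_Z$ (unique car $|Z|\geq 3$), prend via Epstein une isotopie $J_Z$ sur $N_Z$ qui lui est associ�e, observe que pour $w$ assez proche de $y$ la trajectoire $J_Z(w)$ est contenue dans l'anneau $C_Y$, et utilise ce chemin pour \emph{s�lectionner} le bon $\widehat F_Y$ parmi les rel�vements possibles sur l'anneau lorsque $C_X$ est un disque ; lorsque $C_X$ n'est pas un disque, $\widehat F_Y$ est unique et c'est le lemme \ref{Le:T2} qui fournit des chemins de $C_Y$ associ�s � la fois � $\widehat F_Y$ et � $\widehat F_Z$. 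L'ind�pendance du choix en $Z_1$, que votre construction rendrait triviale si elle �tait valide, demande ensuite une application suppl�mentaire du lemme \ref{Le:T2} aux r�unions $Z_1\cup Z_1'$.

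En revanche, votre argument d'unicit� est essentiellement correct et constitue une variante raisonnable de celui de l'article : deux rel�vements v�rifiant (1) et (2) donnent des chemins $\alpha,\beta$ de $C_Y$ associ�s au m�me $\widehat F_Z$ pour tout $Z$ fini, donc le lacet $\alpha.\beta^-$ est homotope � z�ro dans $N_Z$ pour toute partie finie $Z\subset Y$ (quitte � agrandir les petites parties), et le lemme \ref{Le:T1} conclut. L'article obtient l'unicit� diff�remment, en la d�montrant d'abord � $Z_1$ fix� puis en montrant l'ind�pendance en $Z_1$, mais votre voie aboutirait aussi -- � condition d'avoir d'abord �tabli l'existence, ce qui est pr�cis�ment l'�tape o� votre construction ne tient pas.
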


\begin{proof}
Commençons par vérifier, pour tout sous-ensemble $Z_1 \subset X$ de cardinal supérieur ou égal à $2$, l'existence et l'unicité d'un relèvement $\bigl(Z, \widehat F_Z\bigr) \in \mc R$ où l'on a posé $Z = Z_1 \cup \{y\}$.
D'après l'hypothèse $\bigl(X, \widehat F_X\bigr) \in \mc R'$, il existe $\bigl(Z_1, \widehat F_{Z_1}\bigr) \in \mc R$ vérifiant $\bigl(Z_1, \widehat F_{Z_1}\bigr) \pprec \bigl(X, \widehat F_X\bigr)$.
 On en déduit, puisque les relevés de $y$ sont des points fixes de $\widehat F_X$, qu'ils sont également points fixes de $\widehat F_{Z_1}$ (voir remarque \ref{Rem:R01}) d'où l'existence de $\bigl(Z, \widehat F_Z\bigr) \in \mc R$ d'après la proposition \ref{Prop:MaximaFaible}.
L'unicité de $\bigl(Z, \widehat F_Z\bigr)$ provient quant à elle de la remarque \ref{Rem:UniCommu} car $Z$ est fini de cardinal supérieur ou égal à $3$.

Nous allons maintenant procéder en deux étapes.
Dans un premier temps, nous allons montrer que pour tout sous-ensemble fini $Z_1 \subset X$ de cardinal supérieur ou égal à~$2$, en posant $Z = Z_1 \cup \{y\}$, il existe un unique relèvement $\bigl(Y, \widehat F_Y\bigr) \in \mc R$ vérifiant $\bigl(X, \widehat F_X\bigr) \pprec \bigl(Y, \widehat F_Y\bigr)$ et 
tout chemin de $C_Y$ associé à $\widehat F_Y$ est associé à $\widehat F_Z$.
Dans un second temps, nous montrerons que ce relèvement $\bigl(Y, \widehat F_Y\bigr)$ ne dépend pas de $Z_1$ ; 
la démonstration sera alors achevée.

\textbf{Construction de $\bigl(Y, \widehat F_Y\bigr)$ pour un sous-ensemble fini $Z_1 \subset X$ fixé, de cardinal supérieur ou égal à $2$.}
Remarquons que l'on peut travailler indépendamment sur chaque composante connexe de $N_X$.
Sur le revêtement universel des composantes connexes de $N_X$ autres que $C_X$, la première condition impose de choisir $\widehat F_Y$ coïncidant avec $\widehat F_X$ autrement dit l'on définit $\widehat F_Y$ à partir des chemins associés à $\widehat F_X$ qui seront également associés à $\widehat F_Y$.

Intéressons-nous maintenant à $C_X$ et distinguons deux cas :
\begin{itemize}
 \item[\textbullet]
si $C_X$ n'est pas homéomorphe au disque, l'existence de $\bigl(Y,\widehat F_Y\bigr) \in \mc R$ vérifiant $\bigl(X, \widehat F_X\bigr) \pprec \bigl(Y, \widehat F_Y\bigr)$ est assurée par la proposition \ref{Prop:MaximaFaible} et son unicité par la remarque \ref{Rem:UniCommu}.
Par ailleurs, le lemme \ref{Le:T2} appliqué à $\bigl(Z, \widehat F_Z\bigr)$, $\bigl(Y, \widehat F_Y\bigr)$ et au point isolé $y$ assure l'existence de chemins de $C_Y$ associés à $\widehat F_Y$ et à $\widehat F_Z$.

 \item[\textbullet]
si $C_X$ est homéomorphe au disque, utilisons la proposition \ref{PropDef:123} : il existe une isotopie restreinte $(Z,J_Z)$ telle que $(Z, \widehat F_Z)$ soit associé à $(Z, J_Z)$.
Si $w \in C_X$ est suffisamment proche de $y$ isolé dans $Z$, alors $J_Z(w)$ est contenu dans l'anneau $C_Y$.
Parmi les relèvements $\widehat F_Y$ possibles (il en existe d'après la proposition \ref{Prop:MaximaFaible}), on choisit celui associé à $J_Z(w)$ et on obtient la propriété recherchée.
\end{itemize}

\textbf{Le relèvement $\bigl(Y, \widehat F_Y\bigr)$ construit ne dépend pas de $Z_1$.}
Cette propriété est immédiate lorsque $C_X$ n'est pas homéomorphe au disque, puisque l'on a alors unicité d'un relèvement $\bigl(Y, \widehat F_Y\bigr)$ vérifiant $\bigl(X, \widehat F_X\bigr) \pprec \bigl(Y, \widehat F_Y\bigr)$ (remarque \ref{Rem:UniCommu}).
Nous allons maintenant chercher à la démontrer lorsque $C_X$ est homéomorphe au disque.

Soient $Z_1 \subset X$ et $Z_1' \subset X$ deux sous-ensembles finis de cardinaux supérieur ou égal à $2$.
On note $Z = Z_1 \cup \{y\}$, $Z' = Z_1' \cup \{y\}$ puis $Z_1'' = Z_1 \cup Z_1'$ et $Z'' = Z_1'' \cup \{y\}$.
La construction précédente fournit trois relèvements $\bigl(Y, \widehat F_Y\bigr)$, $\bigl(Y, \widehat F'_Y\bigr)$ et $\bigl(Y, \widehat F''_Y\bigr)$ tels que les chemins de $C_Y$ associés à $\widehat F_Y$, $\widehat F'_Y$ et $\widehat F''_Y$ soient également associés respectivement à $\widehat F_Z$, $\widehat F_{Z'}$ et $\widehat F_{Z''}$.
D'après le lemme \ref{Le:T2} appliqué à $\bigl(Z, \widehat F_Z\bigr)$ et $\bigl(Z'', \widehat F_{Z''}\bigr)$ d'une part, à $\bigl(Z', \widehat F_{Z'}\bigr)$ et $\bigl(Z'', \widehat F_{Z''}\bigr)$ d'autre part, le point isolé étant $y$, on a $\bigl(Z, \widehat F_Z\bigr) \pprec \bigl(Z'', \widehat F_{Z''}\bigr)$ et $\bigl(Z', \widehat F_{Z'}\bigr) \pprec \bigl(Z'', \widehat F_{Z''}\bigr)$.
Un chemin de $C_Y$ associé à $\widehat F''_Y$ est associé à $\widehat F_{Z''}$ mais il est donc également associé à $\widehat F_Z$ et $\widehat F_{Z'}$ donc à $\widehat F_Y$ et $\widehat F'_Y$ d'après la construction de $\widehat F_Y$ et $\widehat F'_Y$.
On en déduit $\widehat F'_Y = \widehat F_Y$ comme espéré.
\end{proof}

Nous pouvons maintenant démontrer la proposition \refbis{Prop:MaximaFaible}.

\begin{proof}[Démonstration de la proposition \refbis{Prop:MaximaFaible} dans le cas général]
On suppose ici $X$ infini, le cas fini ayant déjà été traité.
La proposition \ref{Prop:T3} fournit un relèvement $\bigl(Y, \widehat F_Y\bigr)$ bien déterminé.
Nous considérons désormais ce relèvement.

Nous pouvons maintenant  \textbf{vérifier que $(Y,\widehat F_Y)$ appartient à $\mc R'$}.
 Considérons donc un sous-ensemble fermé $Z$ de $Y = X \cup \{y\}$ et montrons l'existence de $(Z,\widehat F_Z) \in \mc R$ vérifiant $(Z,\widehat F_Z) \pprec (Y,\widehat F_Y)$.
Il n'est pas restrictif de supposer que $Z$ est de cardinal supérieur ou égal à $3$.
En effet, si $Z$ est de cardinal au plus $2$, on peut choisir $Z'$ de cardinal $3$ vérifiant $Z \subset Z' \subset Y$.
Si l'on sait construire $(Z', \widehat F_{Z'}) \in \mc R$ vérifiant $(Z', \widehat F_{Z'}) \pprec (Y, \widehat F_Y)$, alors $Z'$ est fortement non enlacé (et $\widehat F_{Z'}$ est unique) et on peut donc trouver $(Z, \widehat F_Z) \in \mc R$ vérifiant $(Z, \widehat F_Z) \pprec (Z', \widehat F_{Z'}) \pprec (Y, \widehat F_Y)$.
On considère donc $Z$ de cardinal supérieur ou égal à $3$.
Deux cas se présentent.

Le \textbf{premier cas} est celui où $Z$ est un sous-ensemble fermé de $X$.
  Dans ce cas, puisque $(X,\widehat F_X)$ appartient à $\mc R'$, il existe $(Z,\widehat F_Z) \in \mc R$ vérifiant $(Z,\widehat F_Z) \pprec (X,\widehat F_X)$ et par transitivité $(Z,\widehat F_Z) \pprec (Y,\widehat F_Y)$.

Le \textbf{deuxième cas} est celui où $Z$ est de la forme $Z = Z_1 \cup \{y\}$ avec $Z_1$ sous-ensemble fermé de~$X$.
On note $C_{Z_1}$ la composante connexe de $N_{Z_1}$ qui contient $y$ et $C_Z = C_{Z_1} \setminus \{y\}$ .
Comme précédemment, puisque $\bigl(X, \widehat F_X\bigr)$ appartient à $\mc R'$, on sait qu'il existe $(Z_1,\widehat F_{Z_1}) \in \mc R$ vérifiant $(Z_1,\widehat F_{Z_1}) \pprec (X, \widehat F_X) \pprec (Y,\widehat F_Y)$.
La proposition \ref{Prop:MaximaFaible} assure l'existence d'un relèvement $\bigl(Z, \widehat F_Z\bigr) \in \mc R$ vérifiant $\bigl(Z_1, \widehat F_{Z_1}\bigr) \pprec \bigl(Z, \widehat F_Z\bigr)$.
On a vu qu'il y a en général unicité de ce relèvement sauf dans le cas particulier où $C_Z$ est homéomorphe à l'anneau et il faut alors choisir $\widehat F_Z$ convenablement.
Soit $A \subset C_Y$ un anneau autour de $y$, de sorte que $A \cup \{y\}$ soit un disque.
Lorsque $C_Z$ est homéomorphe à l'anneau, on choisit le revêtement $\widehat F_Z$ tel que les chemins contenus dans $A$ et associés à $\widehat F_Y$ (il en existe) soient associés à $\widehat F_Z$.

Il faut maintenant vérifier que l'on a $(Z, \widehat F_Z) \pprec (Y, \widehat F_Y)$.
D'après la remarque \ref{Rem:R03}, il suffit de montrer que dans chaque composante de $N_Y$, il existe un chemin associé à $\widehat F_Y$ et $\widehat F_Z$.
Plus précisément,
il suffit de montrer ce résultat dans chaque composante connexe de $C_Z \cap N_X$. En effet, par construction de $\widehat F_Z$, dans les composantes connexes de $N_Z$ autres que $C_Z$, les chemins associés à $\widehat F_Z$ sont les chemins associés à $\widehat F_{Z_1}$ qui vérifie $(Z_1, \widehat F_{Z_1}) \pprec (Y, \widehat F_Y)$.

\begin{enumerate}
 \item
Intéressons-nous d'abord à la \textbf{composante connexe $U^*=C_X$ de $C_Z \cap N_X$} bordant $y$.
Plusieurs cas se présentent :
\begin{enumerate}
\item
si $C_Z$ est homéomorphe à l'anneau, on a choisi $\widehat F_Z$ de sorte que $U^*$ contienne un chemin adapté à $\widehat F_Y$ et à $\widehat F_Z$ ;

\item
si $C_Z$ n'est pas homéomorphe à l'anneau, on considère un sous-ensemble fini $W_1 \subset Z_1$ de cardinal au moins 2 et on note $W = W_1 \cup \{y\}$.
Considérons un anneau $A \subset N_Y$ autour de $y$ (tel que $A \cup \{y\}$ soit un disque).
On sait qu'il existe un unique relèvement $(W, \widehat F_W) \in \mc R$ et que $A$ contient un chemin $\alpha$ associé à $\widehat F_W$.

Considérons tout d'abord l'inclusion $W \subset Z$ avec $W$ contenant $y$ isolé dans $Z$, $C_Z$ non homéomorphe à l'anneau, de même que la composante connexe de $N_W$ bordant $y$ (car $W_1$ est fini de cardinal au moins $2$). Le lemme \ref{Le:T2} assure que $\alpha$ est associé à $\widehat F_Z$.

Considérons ensuite l'inclusion $W \subset Y$.
D'après la proposition \ref{Prop:T3}, $\alpha$ est aussi associé à $\widehat F_Y$.
On a obtenu un chemin $\alpha \in U^*$ associé à $\widehat F_Z$ et $\widehat F_Y$.
\end{enumerate}

 \item
Intéressons-nous maintenant aux autres composantes connexes de $C_Z \cap N_X$ et choisissons donc une composante connexe $U$ de $C_Z \cap N_X$ autre que $U^*$.
D'après la proposition \ref{PropDef:123}, il existe une isotopie restreinte $(Y,I_Y)$ associée à $\widehat F_Y$.
Il faut montrer que $I_Y(z)$ est également associée à $\widehat F_Z$, pour un point $z \in U$.

Pour cela, considérons un point $w \in \partial U$ accessible par un arc simple $\beta$ à valeurs dans $U$ (voir figure \ref{Fig:RR}).
On considère de même un point $w' \in \partial U^*$ distinct de $w$ et accessible par un arc simple $\beta'$ à valeurs dans $U^*$ issu de $y$.
Notons $W_1 = Z_1 \cup \{ w, w'\}$ et $W = Z \cup \{w,w'\} = Z_1 \cup \{w,w',y\}$.
Puisque $(X, \widehat F_X)$ appartient à $\mc R'$, il existe $(W_1, \widehat F_{W_1}) \in \mc R$ vérifiant $(W_1, \widehat F_{W_1}) \pprec (X, \widehat F_X)$.
La proposition \ref{Prop:MaximaFaible} assure l'existence d'un relèvement $(W, \widehat F_W) \in \mc R$ vérifiant $\bigl(W_1, \widehat F_{W_1}\bigr) \pprec \bigl(W, \widehat F_{W}\bigr)$.
Il existe comme précédemment une isotopie restreinte $(W, I_{W})$ associée à $(W, \widehat F_{W})$ (qui se prolonge en $(W_1, I_{W_1})$ associée à $\widehat F_{W_1}$).

\Fig{RR}{Fig:RR}{Démonstration de la proposition \refbis{Prop:MaximaFaible}}

Si $z \in \beta$ est suffisamment proche de $w$, alors les deux arcs $I_Y(z)$ et $I_{W}(z)$ sont disjoints de $\beta'$.
On peut alors trouver un disque ouvert $D \subset N_X$ vérifiant $w' \in \partial D$ et $\beta' \subset \bigl(D \cup \{w'\}\bigr)$ et tel que $I_X(z)$ et $I_{W}(z)$ ne rencontrent pas $\overline D$.
Or on a $(W_1, \widehat F_{W_1}) \pprec (X, \widehat F_X)$ donc $I_Y(z)$ est homotope dans $N_{W_1}$ à $I_{W}(z)$.
Si $y' \in \beta'$ est suffisamment proche de $w'$, $I_Y(z)$ et $I_W(z)$ sont homotopes dans $N_{W_1} \setminus \{y'\}$.
D'après le lemme \ref{Le:Alex}, les chemins $I_Y(z)$ et $I_W(z)$ sont donc homotopes à extrémités fixées dans $N_{W_1} \setminus \{y\} = N_W$.
Or $I_W(z)$ est associé à $\widehat F_W$ donc $I_Y(z)$ l'est aussi.
Ainsi, on a bien trouvé un chemin $I_Y(z)$ associé à $\widehat F_Y$ et $\widehat F_Z$ comme attendu.
\qedhere
\end{enumerate}
\end{proof}

\section{Existence de relèvements maximaux}
\label{sec:induc}

\subsection{Suites croissantes de $(\mc R, \pprec)$}

Ce paragraphe est consacré à la démonstration des propositions \ref{Prop:existmax} et \refbis{Prop:existmax}.
L'existence de relèvements maximaux dans $(\mc R,\pprec)$ repose sur le lemme de Zorn.
L'essentiel des difficultés consiste à montrer la proposition suivante :

\begin{Prop} \label{Prop:Zorn3}
 Soit $F \in \Homeo_*(M)$ et
 soit $\bigl(X_n,\widehat F_n\bigr)_{n \in \N}$ une suite strictement croissante dans $(\mc R,\pprec)$.
 Alors la suite admet un majorant $\bigl(X_\infty,\widehat F_\infty\bigr) \in \mc R$ vérifiant $\displaystyle X_\infty = \overline{\bigcup_{n \in \N} X_n}$.
\end{Prop}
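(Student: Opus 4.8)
The plan is to set $X_\infty := \overline{\bigcup_{n} X_n}$ — which is a closed subset of $\Fix(F)$, since each $X_n$ is and $\Fix(F)$ is closed — and to build $\widehat F_\infty$ one connected component of $N_{X_\infty}$ at a time, by producing an adapted path there. By Remark~\ref{Rem:AutoExist} and Proposition~\ref{Prop:PlusPrecis}, a couple $(X_\infty,\widehat F_\infty)\in\mc R$ exists as soon as each component $U$ of $N_{X_\infty}$ contains a path from some point $z$ to $F(z)$ adapted to $X_\infty$; and by Remark~\ref{Rem:R03} the second clause of $(X_n,\widehat F_n)\pprec(X_\infty,\widehat F_\infty)$ will hold as soon as such a path is moreover adapted to $X_n$ for every $n$, the first (set-theoretic) clause being immediate from Remark~\ref{Rem:R01} and the construction. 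So everything reduces to: in each component of $N_{X_\infty}$, find a path from $z$ to $F(z)$ that is simultaneously adapted to $X_\infty$ and to all the $X_n$. Building it will also establish in passing that $F$ preserves each component of $N_{X_\infty}$ (this is forced for each $X_n$ by the free-homotopy characterisation \Pro3), and on the annulus- or torus-like components it will have to be pinned down in the correct homotopy class (see the last paragraph).

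The tool is a sequential refinement of Lemma~\ref{Le:T1}: \emph{a loop of $N_{X_\infty}$ that is contractible in $N_{X_n}$ for every $n$ is contractible in $N_{X_\infty}$.} This is proved exactly as Lemma~\ref{Le:T1}: given a loop $\Gamma\subseteq N_{X_\infty}$, enclose it in a compact connected sub-surface $K\subseteq N_{X_\infty}$; since $M$ is connected and $K$ compact, $M\setminus K$ has finitely many components, each of which, if it meets $X_\infty$, meets $\bigcup_n X_n$ and hence $X_p$ for $p$ large, so one integer $p$ works for all of them; let $\mc N := K\cup(\text{components of }M\setminus K\text{ not meeting }X_p)$, an open connected sub-surface with $K\subseteq\mc N\subseteq N_{X_\infty}\cap N_{X_p}$ (a component of $M\setminus K$ lying in $\mc N$ misses $X_p$, hence misses $X_\infty$); lifting $\mc N$ to the plane $\widehat N_{X_p}$, any component of the preimage has no relatively compact complementary component (it would project onto a component of $M\setminus\mc N$ avoiding $X_p$), hence is simply connected, so $\mc N\hookrightarrow N_{X_p}$ is $\pi_1$-incompressible; as $\Gamma$ bounds in $N_{X_p}$ it bounds in $\mc N$, hence in $N_{X_\infty}$. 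The same incompressibility shows that two paths of $\mc N$ with equal endpoints homotopic rel endpoints in $N_{X_p}$ are already homotopic rel endpoints in $\mc N$.

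With this tool one transfers the data $(X_p,\widehat F_p)$ to $X_\infty$ and assembles the path. Fix a component $U$ of $N_{X_\infty}$ and $z\in U$, and exhaust $U$ by compact connected sub-surfaces $K_1\subseteq K_2\subseteq\cdots$ with $z\in\mathring K_1$ and $\bigcup_j K_j=U$. For each $j$ pick, as above, an integer $p_j$ (with $p_1\le p_2\le\cdots$) and the associated sub-surface $\mc N_j$ with $K_j\subseteq\mc N_j\subseteq N_{X_\infty}\cap N_{X_{p_j}}$. Choose a path $\alpha_j$ from $z$ to $F(z)$ adapted to $X_{p_j}$; by transitivity of $\pprec$ along the chain and Proposition~\ref{Prop:PlusPrecis}, $\alpha_j$ is then adapted to every $X_m$ with $m\le p_j$. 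The incompressibility of $\mc N_j$ is used to keep $\alpha_j$, together with the relevant null-homotopies, inside $\mc N_j\subseteq N_{X_\infty}$; then for every loop $\gamma\subseteq K_j$ based at $z$, the loop $\gamma^{-}.\alpha_j.F(\gamma).\alpha_j^{-}$ bounds in $N_{X_{p_j}}$, hence in $\mc N_j\subseteq N_{X_\infty}$. Passing to the limit along the exhaustion, and using the refinement of Lemma~\ref{Le:T1} above to control the loops of $N_{X_\infty}$ not contained in any single $K_j$, one gets a single path $\alpha\subseteq U$ from $z$ to $F(z)$ such that $\gamma^{-}.\alpha.F(\gamma).\alpha^{-}$ bounds in $N_{X_\infty}$ and in each $N_{X_n}$, for every loop $\gamma\subseteq N_{X_\infty}$ based at $z$; that is, $\alpha$ is adapted to $X_\infty$ and to every $X_n$. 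Doing this in each component yields $(X_\infty,\widehat F_\infty)\in\mc R$, and $(X_n,\widehat F_n)\pprec(X_\infty,\widehat F_\infty)$ follows from Proposition~\ref{Prop:PlusPrecis} and Remarks~\ref{Rem:R01} and~\ref{Rem:R03}; as $X_\infty=\overline{\bigcup_n X_n}$, this is the desired majorant.

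I expect the main obstacle to be the bookkeeping of the previous paragraph: one must genuinely keep the chosen paths and null-homotopies inside $N_{X_\infty}$ and not merely inside some $N_{X_{p_j}}$ — which is exactly what the $\pi_1$-incompressibility of the $\mc N_j$ is for — and the $\alpha_j$ must be chosen coherently enough across the exhaustion to admit a limit. A secondary but delicate difficulty is the case where a component of $N_{X_\infty}$ is homeomorphic to the open annulus or the torus: there the lift commuting with the deck transformations is not unique, so $\alpha$ must be selected in the right homotopy class, and one argues as in the proof of Proposition~\ref{Prop:T3}, using that each approximating couple $(X_{p_j},\widehat F_{p_j})$ already commutes with the deck transformations.
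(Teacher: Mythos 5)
Your reduction (build an adapted path in each component of $N_{X_\infty}$, then invoke Proposition~\ref{Prop:PlusPrecis} and Remarks~\ref{Rem:R01} et~\ref{Rem:R03}) and your refinement of Lemma~\ref{Le:T1} are both sound and match the paper's strategy. But the step \og{}the incompressibility of $\mc N_j$ is used to keep $\alpha_j$ inside $\mc N_j$\fg{} is where the real difficulty lives, and $\pi_1$-injectivity does not deliver it. Injectivity of $\pi_1(\mc N_j)\to\pi_1(N_{X_{p_j}})$ lets you contract \emph{inside} $\mc N_j$ a loop of $\mc N_j$ that bounds in $N_{X_{p_j}}$; it does not let you represent, by a path \emph{contained in} $\mc N_j$, a prescribed rel-endpoints homotopy class of $N_{X_{p_j}}$. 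And the class of $\alpha_j$ is prescribed: by Remark~\ref{Rem:UniChAdapt} a path issued from $z$ and associated to $\widehat F_{p_j}$ is (generically) unique up to homotopy in $N_{X_{p_j}}$, so you cannot \og{}choose\fg{} $\alpha_j$ in $\mc N_j$ — you must \emph{prove} that this forced class has a representative there, i.e.\ that $\widehat F_{p_j}$ sends the lift of the compact core containing $\widehat z$ into itself. A priori every path associated to $\widehat F_{p_j}$ could be obliged to cross $X_\infty\setminus X_{p_j}$. This is exactly the content of the paper's Lemma~\ref{Le:P5}, whose proof is genuinely nontrivial: it takes a loop $\Gamma$ non contractible in the core $M_m$ (with $\Gamma$, $F(\Gamma)$, $F^2(\Gamma)$, $\delta$, $F(\delta)\subset M_m$), uses the deck transformation $\widehat T\neq\id$ carried by $\Gamma$ and the commutation $\widehat F_m\widehat T=\widehat T\widehat F_m$ to force $\widehat T$ to stabilize the boundary lines separating $\widehat M_m^m$, $\widehat S(\widehat M_m^m)$, $\widehat S^2(\widehat M_m^m)$, and concludes via Epstein's Lemma~4.3 that $\widehat S=\id$. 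Nothing in your proposal plays this role.

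There is a second, related gap in the limiting step. Even granting each $\alpha_j$, two of them $\alpha_j$ and $\alpha_{j'}$ ($j<j'$) are known to be homotopic rel endpoints only in $N_{X_{p_j}}$ (the order relation transfers associated paths downward, never upward), so $\alpha_j$ is associated to $\widehat F_q$ only for $q\le p_j$. To upgrade the homotopy to one in $N_{X_\infty}$ via your refined Lemma~\ref{Le:T1}, you would need $\alpha_j.\alpha_{j'}^-$ contractible in $N_{X_q}$ for a $q$ determined by a compact containing the loop — which may be far larger than $p_j$, i.e.\ you need precisely the conclusion you are constructing. The paper closes this loop in Lemma~\ref{Le:NP4} by a second commutation argument ($\widehat U\circ\widehat T=\widehat T\circ\widehat U$ with $\widehat U$ of infinite order carried by the non-contractible $\Gamma$, forcing $\widehat T=\id$), showing that the single path $\alpha_m=\phi_m(z)$ is associated to \emph{every} $\widehat F_n$. (A minor further point: that $F$ preserves the components of $N_{X_\infty}$ does not follow from \Pro3 applied to the $X_n$ — a component of $N_{X_n}$ may contain several components of $N_{X_\infty}$ — and requires the Brown--Kister argument given in the paper's footnote.)
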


\PropBis{Zorn3}
\begin{PropZorn3}
 Soit $F \in \Homeo_*(M)$ et
 soit $\bigl(X_n,\widehat F_n\bigr)_{n \in \N}$ une suite strictement croissante dans $(\mc R',\pprec)$.
 Alors la suite admet un majorant $\bigl(X_\infty,\widehat F_\infty\bigr) \in \mc R'$ vérifiant $\displaystyle X_\infty = \overline{\bigcup_{n \in \N} X_n}$.
\end{PropZorn3}

\begin{proof}[Démonstration de la proposition \ref{Prop:existmax}]
 Rappelons que l'on suppose donné $(X,\widehat F_X) \in \mc R$.
 On utilise le lemme de Zorn avec l'ensemble $\mc E$ des relèvements $\bigl(Y, \widehat F_Y\bigr) \in \mc R$ qui vérifient $\bigl(X, \widehat F_X\bigr) \pprec \bigl(Y, \widehat F_Y\bigr)$.
 On va donc montrer que $\mc R$ est inductif.
 Soit $\mc F = \bigl(Y_j,\widehat F_j\bigr)_{j \in J}$ une famille de  $\mc E$ totalement ordonnée.
 Si $\mc F$ contient un élément maximal, la démonstration est achevée.
 Sinon, l'ensemble $Y = \bigcup\limits_{j \in J} Y_j$ est distinct de chaque $Y_j$ d'après la remarque \ref{Rem:R02}.
 Comme $M$ est séparable, on peut trouver une suite croissante d'ensembles finis $(X_n)_{n \in \N}$ vérifiant :
 \[
  \bigcup_{n \in \N} X_n \;\subset\; Y
  \qquad \text{et} \qquad
  \overline{ \bigcup_{n \in \N} X_n } = \overline Y.
 \]
 Or, $\mc F$ est totalement ordonnée.
 Pour tout entier $n \in \N$, comme $X_n$ est fini et contenu dans $Y$, il existe donc un ensemble $Y_{j_n}$ contenant $X_n$.
 Nous allons maintenant extraire de la suite $\bigl(Y_{j_n}, \widehat F_{j_n}\bigr)_{n \in \N}$ une sous-suite strictement croissante dans $(\mc R, \pprec)$.
 
 Pour tout entier $n \in \N$, puisque $Y_{j_n}$ est distinct de $Y$, il existe un entier $m > n$ tel que l'on n'ait pas $X_m \subset Y_{j_n}$, ni \textit{a fortiori} $Y_{j_m} \subset Y_{j_n}$.
 Comme $\mc F$ est totalement ordonnée, on en déduit $\bigl(Y_{j_n},\widehat F_{j_n}\bigr) \pprec \bigl(Y_{j_m},\widehat F_{j_m}\bigr)$ avec $Y_{j_n} \neq Y_{j_m}$.
 Il existe donc une sous-suite $\bigl(Y_{j_{\varphi(n)}},\widehat F_{j_{\varphi(n)}}\bigr)_{n \in \N}$ de la suite $\bigl(Y_{j_n},\widehat F_{j_n}\bigr)_{n \in \N}$, strictement croissante dans $(\mc R,\pprec)$.
 D'après la proposition \ref{Prop:Zorn3}, cette sous-suite est majorée par un élément de la forme $\bigl(\overline Y,\widehat F_\infty\bigr) \in \mc R$.
 
 Il reste à vérifier que $\bigl(\overline Y,\widehat F_\infty\bigr)$ majore bien tous les éléments de $\mc F$.
 Soit $\bigl(Y', \widehat F'\bigr) \in \mc F$ un tel élément.
 Si on avait $X_n \subset Y'$ pour tout $n \in \N$, on aurait aussi $\overline Y \subset Y'$, ce dernier ensemble étant fermé.
 Comme $\mc F$ est totalement ordonnée, $\bigl(Y',\widehat F'\bigr)$ serait alors un élément maximal de $\mc F$, ce qui contredit l'hypothèse.
 Il existe donc un entier $n \in \N$ tel que l'on n'ait pas $X_n \subset Y'$ ce qui entraîne $\bigl(Y',\widehat F'\bigr) \pprec \bigl(Y_{j_n},\widehat F_{j_n}\bigr) \pprec \bigl(Y_\infty,\widehat F_\infty\bigr)$.
 Le relèvement $\bigl(Y_\infty,\widehat F_\infty\bigr)$ est bien un majorant de $\mc F$.
\end{proof}

\begin{proof}[Démonstration de la proposition \refbis{Prop:existmax}]
 La démonstration est identique à celle de la proposition \ref{Prop:existmax} en remplaçant $\mc R$ par $\mc R'$ et en utilisant la proposition \refbis{Prop:Zorn3} au lieu de la proposition \ref{Prop:Zorn3}.
\end{proof}

Nous allons maintenant démontrer la proposition \ref{Prop:Zorn3}.
Une partie de la preuve (le lemme \ref{Le:NP4}) sera reportée au paragraphe suivant.

\begin{proof}[Démonstration de la proposition \ref{Prop:Zorn3}]
Introduisons l'ensemble
\[ X_\infty = \overline{\bigcup_{n \in \N} X_n} \qquad
 \text{et son complémentaire $N_\infty = M \setminus X_\infty$.}
\]
 Le revêtement universel de $N_\infty$ est obtenu par réunion des revêtements universels de chaque composante connexe de $N_\infty$.
De plus, M. Brown et J.M. Kister \cite{BK84} ont montré que $F$ fixe les composantes connexes de $N_\infty$.
\footnote{Dans le cas particulier qui nous intéresse, on peut redémontrer ce résultat facilement.
  En effet, supposons par l'absurde qu'il existe un point $x \in N_\infty$ tel que $x$ et $F(x)$ soient dans deux composantes distinctes $U$ et $V$ de $N_\infty$.
  On peut alors choisir trois arcs disjoints (sauf en $x$) dans $U \cup X_\infty$ de $x$ vers trois points de $X_\infty$ dont l'ordre cyclique détermine l'orientation près de $x$.
  Les images dans $V \cup X_\infty$ de ces arcs déterminent une orientation opposée près de $F(x)$ ce qui contredit que $F$ préserve l'orientation.
}
Il suffit donc de définir $\widehat F_\infty$ sur chaque composante connexe de $N_\infty$.

Plaçons-nous dans une composante connexe $N'_\infty$ de $N_\infty$.
On peut alors écrire $N'_\infty$ comme la réunion d'une suite croissante de surfaces à bord, compactes et connexes : $N'_\infty = \bigcup_{n\in\N} M'_n$, par exemple parce que $M$ est dénombrable à l'infini et triangulable (théorème de Rad\'o \cite{Rado25} ou \cite{Moise77}~page~60) et tout compact rencontre un nombre fini de triangles. 
De plus, $M \backslash M'_n$ a toujours un nombre fini de composantes connexes.
Ensuite, pour tout $n \in \N$, nous notons $M_n$ la réunion de $M'_n$ et des composantes connexes de $M \backslash M'_n$ qui ne rencontrent pas $X_\infty$.
Ainsi, d'une part $N'_\infty$ est la réunion croissante des sous-variétés connexes à bord $M_n$, d'autre part chaque composante connexe de $M \backslash M_n$ rencontre~$X_\infty$.
Quitte à extraire une sous-suite, on peut également supposer $F(M_n) \subset M_{n+1}$.

Les composantes connexes de $M \setminus M_n$ sont en nombre fini et chacune intersecte $X_\infty$.
Il existe donc une suite extraite $\left(X_{\varphi(n)}\right)_{n \in \N}$ telle que pour tout $n \in \N$, chaque composante connexe de $M \backslash M_n$ intersecte au moins un point de $X_{\varphi(n)}$.
Pour éviter d'alourdir inutilement les notations, nous pouvons remplacer la suite $(X_n,\widehat F_n)$ par la suite $\left(X_{\varphi(n)},\widehat F_{\varphi(n)}\right)_{n \in \N}$ ou en d'autres termes supposer que les composantes connexes de $M \backslash M_n$ contiennent chacune au moins un point de $X_n$.

Pour tout $n \in \N$, on introduit également $N_n = M \backslash X_n$ et $\widehat\pi_n : \widehat N_n \rightarrow \nobreak N_n$ son revêtement universel.
Enfin, on choisit arbitrairement un point de la pré-image de $M_0$ par $\widehat\pi_n$ et pour tout $m \in \N$, on note $\widehat M_m^n \subset \widehat N_n$ la composante connexe de la pré-image de $M_m$ par $\widehat\pi_n$ contenant ce point.

\begin{Rem} \label{Rem:Msc}
 Si $n \geq m$, la composante $\widehat M^n_m$ est une surface à bord dont les bords sont des droites (ou un cercle dans le cas particulier où $M_m$ est un disque).
 De fait, $\widehat M^n_m$ est simplement connexe.
 En effet, $\widehat N_n$ est simplement connexe et toutes les composantes connexes de $M \backslash M_m$ contiennent un point de $X_m$ et donc de $X_n$.
\end{Rem}

Nous avons vu précédemment que s'il existe un relèvement $(Y,\widehat F_Y) \in \mc R$, tout lacet de $N_Y$ est librement homotope à son image (proposition \ref{PropDef:123}).
Une première étape de la preuve de la proposition \ref{Prop:Zorn3} sera de prouver le résultat \textit{a priori} plus faible suivant que nous allons admettre momentanément.

\begin{Lemme} \label{Le:NP4}
Pour tout lacet $\Gamma \subset N'_\infty$, il existe une homotopie libre dans $N'_\infty$ entre $\Gamma$ et $F(\Gamma)$ qui, pour tout $n \in \N$, est associée à $\widehat F_n$.
\end{Lemme}

 Achevons la preuve de la proposition \ref{Prop:Zorn3} et construisons le relèvement~$\widehat F_\infty$. 
 Choisissons un lacet $\Gamma \subset N'_\infty$ basé en un point $z \in N'_\infty$.
 D'après le lemme \ref{Le:NP4}, il existe une homotopie libre entre $\Gamma$ et $F(\Gamma)$ qui, pour tout $n \in \N$, soit associée à $\widehat F_n$.
 Notons $\alpha$ le chemin décrit par $z$ le long de cette homotopie.
 Nous allons montrer que ce chemin est adapté à $X_\infty$.
 
 Pour cela, considérons un lacet $\gamma \subset N'_\infty$ basé en $z$.
 Il existe un entier $n \in \N$ tel que les lacets $\gamma$ et $F(\gamma)$ ainsi que le chemin $\alpha$ soient tous contenus dans $M_n$.
 Choisissons un relèvement $\widehat z_n \in \widehat M^n_n$ de $z$ à $\widehat N_n$.
 Alors les lacets $\gamma$ et $\alpha.F(\gamma).\alpha^{-}$ se relèvent en deux chemins issus de $\widehat z_n$ entièrement contenus dans $\widehat M_n$.
 Or $\alpha$, associé à $\widehat F_n$, est donc adapté à $X_n$ d'après la proposition \ref{Prop:PlusPrecis}.
 Il en résulte que les lacets $\gamma$ et $\alpha.F(\gamma).\alpha^{-}$ sont homotopes dans $N_n$.
 Leurs relèvements dans $\widehat N_n$ d'origine $\widehat z_n$ ont donc même extrémité.
 Comme $\widehat M_n$ est simplement connexe d'après la remarque \ref{Rem:Msc}, ils sont homotopes dans $\widehat M_n$ à extrémités fixées.
 Cette homotopie se projette sur une homotopie dans $M_n$ donc dans $N'_\infty$ entre les lacets $\gamma$ et $\alpha.F(\gamma).\alpha^{-}$.
 Nous avons bien montré que $\alpha$ est adapté à $X_\infty$.
 
 La proposition \ref{Prop:PlusPrecis} assure l'existence de $\widehat F_\infty$ et le chemin $\alpha$ sera associé à $\widehat F_\infty$.
 Il reste à vérifier, pour tout $n \in \N$, la relation $(X_n,\widehat F_n) \pprec (X_\infty,\widehat F_\infty)$.
 Pour cela, considérons un entier $n \in \N$ ; des inclusions
 \begin{gather*}
  \forall p \geq n \qquad X_p \subset \left( X_n \cup \widehat\pi_{X_n}( \Fix(\widehat F_n)) \right),\\
  \text{on tire} \qquad\qquad
  X_n \subset X_\infty \subset \left( X_n \cup \widehat\pi_{X_n}( \Fix(\widehat F_n)) \right).
 \end{gather*}
Par ailleurs, le chemin $\alpha$ est associé à $\widehat F_\infty$ et à $\widehat F_n$, ce qui montre $(X_n,\widehat F_n) \pprec (X_\infty,\widehat F_\infty)$ grâce à la remarque \ref{Rem:R03}.
\end{proof}

\begin{proof}[Démonstration de la proposition \refbis{Prop:Zorn3}]
Il faut maintenant montrer que si l'on suppose que pour tout entier $n$ le relèvement $(X_n,\widehat F_n)$ appartient à $\mc R'$, alors $(X_\infty,\widehat F_\infty)$ que l'on vient de construire appartient lui aussi à $\mc R'$.
Pour cela, considérons un sous-ensemble fermé $Y \subset X_\infty$.
Il suffit de montrer que le chemin $\alpha$ est adapté à $Y$.
En utilisant la proposition \ref{Prop:PlusPrecis}, on en déduira qu'il existe un relèvement $(Y, \widehat F_Y) \in \mc R$ et on pourra vérifier $(Y, \widehat F_Y) \pprec (X_\infty,\widehat F_\infty)$.

Soit un lacet $\Gamma \subset N_Y$ basé en $z$, origine de $\alpha$.
Notons $\Delta$ le lacet $\alpha.F(\Gamma).\alpha^-$.
Comme $\Delta$ est compact et $Y$ est fermé, on peut construire un voisinage compact de $\Delta$ qui ne rencontre pas $Y$ et qui est une variété à bord.
Notons $M'$ la variété à bord obtenue en réunissant ce voisinage avec les composantes connexes de son complémentaire qui ne rencontrent pas $Y$.
Remarquons que $M \setminus M'$ a un nombre fini de composantes connexes.
On choisit un point $x_i \in \bigcup_{j \in \N} X_j$ dans chacune de ces composantes connexes ; on obtient une famille finie $(x_i)_{1 \leq i \leq p}$.
Il existe donc $j_0 \in \N$ tel que $X_{j_0}$ contienne tous les $(x_i)_{1 \leq i \leq p}$.
Comme $\alpha$ est adapté à $X_{j_0}$, les lacets $\Gamma$ et $\Delta$ sont homotopes dans $N_{j_0} = M \setminus X_{j_0}$.

Notons $\widehat\pi_{j_0} : \widehat N_{j_0} \to N_{j_0}$ le revêtement universel de $N_{j_0}$.
Relevons $\Gamma$ à~$\widehat N_{j_0}$ en un chemin $\widehat\Gamma$ et relevons ensuite $\Delta^-$ en $\widehat\Delta^-$ à partir de l'extrémité de~$\widehat\Gamma$.
Puisque $\Gamma$ et $\Delta$ sont homotopes dans $N_{j_0}$, $\widehat\Gamma.\widehat\Delta^-$ est un lacet.
On note~$\widehat M'$ la composante connexe de $\widehat\pi_{j_0}^{-1}(M')$ qui contient $\widehat\Gamma$.
Par construction de~$M'$ et de la famille $(x_i)_{1 \leq i \leq p}$, l'ensemble $\widehat M'$ est simplement connexe.
Le lacet $\widehat\Gamma.\widehat\Delta^-$ est donc contractile dans $\widehat M'$.
De plus, $\widehat M'$ ne rencontre pas la préimage par $\widehat\pi_{j_0}$ de $Y \setminus X_{j_0}$.
Il en résulte que $\widehat\Gamma.\widehat\Delta^-$ est contractile dans la préimage de $N_Y$ donc $\Gamma.\Delta$ est contractile dans $N_Y$.
On a montré, comme annoncé, que $\alpha$ est adapté à $Y$ d'où l'existence d'un relèvement $(Y, \widehat F_Y) \in \mc R$ d'après la proposition \ref{Prop:PlusPrecis}.

Nous voulons montrer $(Y, \widehat F_Y) \pprec (X_\infty, \widehat F_\infty)$.
En utilisant la remarque \ref{Rem:R03} sachant que $\alpha$ est adapté à $Y$ et à $X_\infty$, il reste à montrer :
\[
 Y \subset X_\infty \subset \left( Y \cup \widehat\pi_Y(\Fix(\widehat F_Y)) \right)
\]
et plus précisément la seconde inclusion.
Par continuité de $\widehat F_Y$, cela revient à montrer :
\[
 \forall n \in \N \qquad X_n \subset \left( Y \cup \widehat\pi_Y(\Fix(\widehat F_Y)) \right)
\]

Considérons donc un entier $n \in \N$ et un point $x \in X_n \setminus (X_n \cap Y)$ et montrons $x \in \widehat\pi_Y(\Fix(\widehat F_Y))$.
Pour cela, considérons un chemin $\beta \subset N_Y$ de $z$ vers $x$ et le lacet $\gamma = \beta^-.\alpha.F(\beta)$ basé en $x$.
On va faire le même raisonnement que ci-dessus.
On commence par considérer un voisinage compact de $\gamma$ dans $N_Y$ et la variété à bord $M''$ obtenue en réunissant ce voisinage avec les composantes connexes de son complémentaire qui ne rencontrent pas $Y$.
Dans chaque composante connexe de $M \setminus M''$, on choisit un point $x'_i \in \bigcup_{j \in \N} X_j$ et on obtient une famille finie $(x'_i)_{1 \leq i \leq p}$.
On note $Y'$ l'ensemble des points $(x'_i)_{1 \leq i \leq p}$ et on choisit $m \geq n$ vérifiant $Y' \subset X_m$.

Puisque $(X_m, \widehat F_{X_m})$ appartient à $\mc R'$, il existe un relèvement $\widehat F_{Y'}$ vérifiant $\left( Y', \widehat F_{Y'}\right) \pprec \left(X_m, \widehat F_{X_m}\right)$.
Soit $\widehat x' \in \widehat N_{Y'}$ un relevé de $x$.
De $\left( Y', \widehat F_{Y'}\right) \pprec \left(X_m, \widehat F_{X_m}\right)$, on déduit $\widehat F_{Y'}(\widehat x') = \widehat x'$.
Relevons $\beta$ et $\alpha$ à $\widehat N_{Y'}$ en $\widehat\beta$ d'extrémité $\widehat x'$ et $\widehat\alpha$ de même origine que $\widehat\beta$.
Comme $\alpha$ est associé à $\widehat F_{Y'}$, le lacet $\gamma$ se relève à $\widehat N_{Y'}$ en $\widehat\gamma = \widehat\beta^-.\widehat\alpha.\widehat F_{Y'}(\widehat\beta)$ d'extrémité $\widehat F_{Y'}(\widehat x') = \widehat x'$.
C'est donc un lacet et $\gamma$ est homotopiquement trivial dans $N_{Y'}$.

Enfin, remarquons comme précédemment que, par construction de $Y'$, la composante connexe $\widehat M''$ de $\widehat\pi_{Y'}^{-1}(M'')$ qui contient $\widehat\gamma$ est simplement connexe et ne rencontre pas la préimage de de $Y \setminus (Y \cap Y')$ par $\widehat\pi_{Y'}$.
Ainsi $\widehat\gamma$ est contractile dans la préimage de $N_Y$ et $\gamma$ est contractile dans $N_Y$.
Pour tout relevé $\widehat x \subset \widehat N_Y$ de $x$, on peut relever $\gamma$ en un lacet de $\widehat N_Y$ d'origine~$\widehat x$.
Comme $\alpha$ est associé à $\widehat F_Y$, on en déduit $\widehat F_Y(\widehat x) = \widehat x$ ce qui achève la démonstration.
\end{proof}

\subsection{Homotopie entre tout lacet et son image}

Pour démontrer le lemme \ref{Le:NP4}, nous allons avoir besoin d'un lemme préliminaire.

\begin{Lemme} \label{Le:P5}
Soit $\Gamma\subset N'_{\infty}$ un lacet non contractile basé en un point $z \in M$.
On se donne un entier $m$ suffisamment grand pour que les propriétés  suivantes soient vérifiées~:

\begin{itemize}
 \item 
les lacets $\Gamma$, $F(\Gamma)$, $F^2(\Gamma)$ sont inclus dans $M_m$~;
 \item
il existe un chemin $\delta$ joignant $z$ à $F(z)$ tel que $\delta$ et $F(\delta)$ sont inclus dans $M_m$. 
\end{itemize}

Alors il existe une homotopie libre $\phi_m$ de $\Gamma$ à $F(\Gamma)$ à support dans $M_m$ et associée à $\widehat F_m$.
\end{Lemme}

\begin{proof}
Notons $\widehat G_m$ le groupe des automorphismes de revêtement du revêtement universel $\widehat\pi_m~: \widehat N_m\to N_m$.
Considérons un relèvement $\widehat \Gamma$ de $\Gamma$ à $\widehat N_m$ issu d'un relevé $\widehat z\in \widehat M_m^m$ de $z$ et commençons par montrer que $\widehat\Gamma$ n'est pas un lacet.

Par l'absurde, si c'était le cas, alors $\widehat\Gamma$ serait un lacet contenu dans $\widehat M_m^m$ qui est simplement connexe d'après la remarque \ref{Rem:Msc}.
Ainsi $\Gamma$ serait contractile dans $M_m$, ce qui contredit les hypothèses du lemme.
Ainsi $\widehat\Gamma$ n'est pas un lacet.
Il en résulte que le chemin $\widehat \Gamma$ joint $\widehat z$ à un point de la forme $\widehat T(\widehat z)$, où $\widehat T\in\widehat G_m$ est distinct de l'identité.

Montrons maintenant que $\widehat  F_m(\widehat \Gamma)$ est contenu dans $\widehat M_m^m$ et donc en particulier que $\widehat F_m(\widehat z)$ appartient à $\widehat M_m^m$.  
Par hypothèse, $\widehat \Gamma$ est inclus dans $\widehat M_m^m$.
On en déduit que $\widehat T(\widehat z)\in \widehat M_m^m$, et donc que   $\widehat T(\widehat M_m^m)= \widehat  M_m^m$.
Notons $\widehat \delta$ le relèvement de $\delta$ à $\widehat N_m$ d'extrémité $\widehat F_m(\widehat z)$.
L'origine de $\widehat \delta$ s'écrit $\widehat S(\widehat z)$, où $\widehat S\in \widehat G_m$ (voir la partie gauche de la figure \ref{Fig:LemmeDouble}).
La composante connexe de $\widehat \pi^{-1}(M_m)$ qui contient $\widehat F_m(\widehat z)$ est donc $\widehat S(\widehat M_m^m)$.
Nous allons supposer que  $\widehat S(\widehat M_m^m)\not=\widehat M_m^m$ et aboutir à une contradiction.

\Fig{LemmeDouble}{Fig:LemmeDouble}{Démonstration du lemme \ref{Le:P5}}

Par hypothèse, nous savons que $\widehat F_m(\widehat \Gamma)$ est inclus dans $\widehat S(\widehat M_m^m)$.
Puisque $\widehat F_m(\widehat \Gamma)$ joint $\widehat F_m(\widehat z)$ à $\widehat F_m\circ\widehat T(\widehat z)=\widehat T \circ \widehat F_m(\widehat z)$, on en déduit que $\widehat T$ stabilise   $\widehat S(\widehat M_m^m)$.
Pour les mêmes raisons, cet automorphisme stabilise également $\widehat S^2(\widehat M_m^m)$.
En effet,  $\widehat F_m^2(\widehat \Gamma)$ joint $\widehat F^2_m(\widehat z)$  à $\widehat F^2_m\circ\widehat T(\widehat z)=\widehat T \circ \widehat F^2_m(\widehat z)$  et est inclus dans une composante connexe de $\widehat \pi^{-1}(M_m)$ ; il faut montrer que c'est  $\widehat S^2(\widehat M_m^m)$.
Il suffit pour cela de remarquer que  $\widehat F^2_m(\widehat z)$ appartient à $\widehat S^2(\widehat M_m^m)$.
En effet, par hypothèse, on sait que $\widehat F_m(\widehat  \delta)$ appartient à une composante connexe de $\widehat \pi^{-1}(M_m)$, que son origine $\widehat F_m\circ \widehat S(\widehat z)=\widehat S\circ \widehat F_m(\widehat z)$ est incluse dans $\widehat S^2(\widehat M_m^m)$ et que son extrémité est  $\widehat F^2_m(\widehat z)$.
  
Puisque $M_m$ est une surface à bord, toute composante connexe $\gamma$ de la préimage $\pi_m^{-1}(\partial M_m)$ de la frontière de $M_m$ est une droite topologique proprement plongée dans $\widehat N_m$ et son stabilisateur dans $\widehat G_m$ est un groupe monogène infini  $\widehat G(\gamma)$.
De plus pour tout $\widehat U\in \widehat G_m$, s'il existe un entier $r\neq0$ tel que $\widehat U^r\in \widehat G(\gamma)$, alors $\widehat U\in\widehat G(\gamma)$.
Ceci implique que si $\gamma'$ est une autre composante connexe de de $\pi_m^{-1}(\partial M_m)$, alors $\widehat G(\gamma)=  \widehat G(\gamma')$  ou  $\widehat G(\gamma)\cap \widehat G(\gamma') = \{\id_{\widehat N_m}\}$.
Il existe une composante connexe $\widehat \alpha$ de la frontière de $ \widehat  M_m^m$ qui sépare  l'intérieur de $\widehat M_m^m$ de $\widehat S( \widehat M_m^m)$.
On sait alors que $\widehat \beta= \widehat S(\widehat \alpha)$ est la composante connexe de la frontière de $\widehat S(\widehat  M_m^m)$ qui sépare  l'intérieur de $\widehat S(\widehat M_m^m)$ de $\widehat S^2( \widehat M_m^m)$.
L'automorphisme $\widehat T$, stabilisant  $\widehat M_m^m$ et $\widehat S( \widehat M_m^m)$, stabilise également  $\widehat \alpha$.
De même, puisqu'il stabilise $ \widehat S(\widehat M_m^m)$ et  $\widehat S^2( \widehat M_m^m)$, il stabilise également  $\widehat \beta$.
De $\widehat T \neq \id_{\widehat N_m}$, on déduit $\widehat G(\alpha)=  \widehat G(\beta)$.

Puisque $\widehat G(\beta)$ est l'image de $\widehat G(\alpha)$ par l'automorphisme intérieur $\widehat U\mapsto \widehat S\circ \widehat U\circ \widehat S^{-1}$, cet automorphisme laisse invariant $\widehat G(\alpha)$.
Ainsi, si $\widehat U$ est un générateur de $\widehat G(\alpha)$, alors $\widehat S\circ \widehat U\circ \widehat S^{-1}$ en est également un, et on a donc $\widehat S\circ \widehat U = \widehat U\circ \widehat S$ ou $\widehat S\circ \widehat U = \widehat U^{-1}\circ \widehat S$ :
\begin{itemize}
 \item 
dans le premier cas, Epstein a montré que $\widehat S$ et $\widehat U$ engendrent un groupe monogène infini (\cite{Epstein66}, Lemma 4.3), ce qui n'est possible que si $\widehat S$ est dans $\widehat G(\alpha)$ ;
 \item
dans le second cas, on obtient $\widehat U^{-1} = \widehat S \circ \widehat U \circ \widehat S^{-1}$ puis :
\[ \widehat U \circ \widehat S^2 = (\widehat U^{-1})^{-1} \circ \widehat S^2 = (\widehat S \circ \widehat U \circ \widehat S^{-1})^{-1} \circ \widehat S^2 = \widehat S \circ \widehat U^{-1} \circ \widehat S = \widehat S^2\circ \widehat U \]
et on peut conclure au même résultat.
\end{itemize}
La contradiction provient de ce que $\widehat S$ ne stabilise pas $\widehat\alpha$.
On vient de montrer $\widehat S(\widehat M_m^m) = \widehat M_m^m$ et donc, comme annoncé, $\widehat F_m(\widehat z)$ est contenu dans~$\widehat M_m^m$.

Poursuivons la démonstration : nous pouvons donc choisir un chemin $\widehat\eta$ de $\widehat z$ à $\widehat F_m(\widehat z)$ contenu dans $\widehat M_m^m$ qui se projette en un chemin $\eta$ de $z$ à $F(z)$ (voir la partie droite de la figure \ref{Fig:LemmeDouble}).
Le chemin $\widehat F_m(\widehat\Gamma)$ a pour origine $\widehat F_m(\widehat z)$ et pour extrémité $\widehat F_m \circ \widehat T(\widehat z)$ qui n'est autre que $\widehat T \circ \widehat F_m(\widehat z)$ extrémité de $\widehat T(\widehat\eta)$.
On peut donc construire une homotopie libre $\widehat \phi_m$ entre $\widehat\Gamma$ et $\widehat F_m(\widehat\Gamma)$ telle que $\widehat z$ parcoure $\widehat\eta$ le long de l'homotopie.
Cette homotopie $\widehat \phi_m$ se projette en une homotopie libre $\phi_m$ entre $\Gamma$ et $F_m(\Gamma)$.
Comme $\widehat M_m^m$ est simplement connexe d'après la remarque \ref{Rem:Msc}, on peut choisir $\phi_m$ à support dans $M_m$.
De plus, comme $\eta$ est associé à $\widehat F_m$, l'homotopie $\widehat \phi_m$ est associée à $\widehat F_m$ et à support dans $M_m$.
\end{proof}

\begin{proof}[Démonstration du lemme \ref{Le:NP4}]
  Considérons un lacet $\Gamma \subset N'_\infty$, non contractile dans $N'_\infty$, basé en un point $z \in N'_\infty$.
 On considère également un chemin $\delta$ de $z$ à $F(z)$ contenu dans $N'_\infty$.
 Il existe un entier $m \in \N$ tel que $\Gamma$, $F(\Gamma)$, $F^2(\Gamma)$, $\delta$ et $F(\delta)$ soient contenus dans $M_m$.
 On en déduit que $\Gamma$ n'est pas contractile dans $N_m$.
 D'après le lemme \ref{Le:P5},  pour tout $n \geq m$, il existe une homotopie libre $\phi_n$ de $\Gamma$ à $F(\Gamma)$ à support dans $N_\infty$ et associée à $\widehat F_n$.
 Considérons l'homotopie $\phi_m$ ; nous allons prouver qu'elle est associée à $\widehat F_n$ pour tout $n \in \N$.
 
 Notons $\alpha_m = \phi_m(z)$.
 Le chemin $\alpha_m$ est associé à $\widehat F_m$ donc également associé à $\widehat F_n$ pour tout $n \leq m$ car la suite $\bigl(X_p, \widehat F_p\bigr)_{p \in \N}$ est croissante pour~$\pprec$.
 Il en résulte que pour tout $n \leq m$, $\phi_m$ est associée à $\widehat F_n$.
 
 Pour tout $n \geq m$, on choisit un relèvement $\widehat\Gamma_n \subset \widehat M_m^n$ de $\Gamma$ à $\widehat N_n$.
 Notons $\alpha_n = \phi_n(z)$.
Ce chemin $\alpha_n$ est associé à $\widehat F_n$ donc à $\widehat F_m$, toujours d'après la relation d'ordre $\pprec$.
D'après la remarque \ref{Rem:UniChAdapt}, $\alpha_n$ est homotope à $\alpha_m$ à extrémités fixées dans $N_m$.
Il reste à montrer qu'ils sont en réalité homotopes dans $N'_\infty$.

Pour cela, considérons le lacet $\alpha_n.\alpha_m^-$.
Notons $\widehat z$ un relèvement de $z$ au revêtement universel $\widehat N'_\infty$ de $N'_\infty$.
Relevons le chemin $\alpha_n$ en un chemin $\widehat\alpha_n$ issu de $\widehat z$ puis $\alpha_m^-$ en un chemin $\widehat\alpha_m^-$ issu de l'extrémité de $\widehat\alpha_n$.
Il existe un automorphisme $\widehat T$ du revêtement $\widehat\pi : \widehat N'_\infty \to N'_\infty$ tel que l'extrémité de $\widehat\alpha_n . \widehat\alpha_m^-$ soit $\widehat T(\widehat z)$.
De même relevons $\Gamma$ en un chemin $\widehat\Gamma \subset \widehat N'_\infty$ d'origine $\widehat z$.
Notons $\widehat U$ l'automorphisme du revêtement $\widehat\pi : \widehat N'_\infty \to N'_\infty$ envoyant l'origine de $\widehat\Gamma$ sur son extrémité.

On retrouve les arguments employés à la fin de la démonstration de la proposition \refbis{Prop:MaximaFaible}.
L'homotopie $\phi_n.\phi_m^-$, entre $\Gamma$ et lui-même, se relève en une homotopie entre le chemin $\widehat\Gamma$ et le chemin $\widehat T\bigl(\widehat\Gamma\bigr)$ d'extrémité $\widehat T  \circ \widehat U (\widehat z)$.
D'autre part, l'extrémité $\widehat U(\widehat z)$ de $\widehat\Gamma$ décrit $\widehat U(\widehat\alpha_n.\widehat\alpha_m^-)$ le long de l'isotopie et est donc envoyée sur $\widehat U \circ \widehat T(\widehat z)$.
On en déduit $\widehat U \circ \widehat T = \widehat T \circ \widehat U$.

Si $N'_\infty$ n'est ni le tore, ni l'anneau, il en découle que $U$ et $T$ appartiennent à un même sous-groupe monogène du groupe des automorphismes de revêtement.
Ce résultat perdure si $N'_\infty$ est l'anneau car le groupe des automorphismes de revêtement est alors monogène.
Enfin, $N'_\infty$ ne peut être le tore car cela signifierait que $X_\infty$ est vide ce qui est impossible compte tenu des hypothèses de la proposition \ref{Prop:Zorn3}.
Finalement, il existe toujours $(k,l) \in \Z^2$ tel que $\widehat T^k = \widehat U^l$.

Cela signifie que $(\alpha_n.\alpha_m^-)^k$ est librement homotope à $\Gamma^l$ dans $N'_\infty$ donc dans $N_m$.
Or le premier lacet est homotopiquement trivial dans $N_m$ tandis que le second ne l'est que si $l=0$ auquel cas $\widehat T$ est l'identité.
On obtient alors que $\alpha_n$ et $\alpha_m$ sont homotopes dans $N'_\infty$ à extrémités fixées.
Comme~$\alpha_n$ est associé à $\widehat F_n$, $\alpha_m$ est également associé à $\widehat F_n$.
D'après la remarque~\ref{Rem:Associe}, $\phi_m$ est donc associée à $\widehat F_n$.
\end{proof}

\section{Lacets contractiles} \label{sec:contrac}

L'objet de cette partie est la démonstration de la proposition \ref{PropTh:XyNonEnlace}.
Nous en donnerons une preuve topologique qui consiste à ramener le lacet contractile sur un point.
Le cas le plus simple est celui où le lacet est entièrement contenu dans un disque de $N_X$ ; nous l'aborderons en premier avant de nous intéresser au cas général.

\begin{Lemme}\label{Le:PPhi}
 Soit $D$ un disque topologique fermé de $M$.

\noindent Alors il existe une famille $(\phi^D_{x,x',s})_{(x,x',s) \in \mathring D \times \mathring D \times [0,1]}$ d'homéomorphismes de $M$ vérifiant :
\begin{enumerate}
 \item
l'application $(x,x',s) \mapsto \phi^D_{x,x',s}$ est continue de $\mathring D \times \mathring D \times [0,1]$ dans $\Homeo_*(M)$.
 \item
pour tout $(x,x',s) \in \mathring D \times \mathring D \times [0,1]$, la restriction $(\phi^D_{x,x',s})_{|M\setminus D}$ est l'identité de $M \setminus D$ ;
 \item
pour tout $(x,s) \in \mathring D \times [0,1]$, l'homéomorphisme $\phi^D_{x,x,s}$ est l'identité de $M$ ;
 \item
pour tout $(x,x') \in \mathring D \times \mathring D$, l'homéomorphisme $\phi^D_{x,x',0}$ est l'identité de $M$ ;
 \item
pour tout $(x,x') \in \mathring D \times \mathring D$, on a $\phi^D_{x,x',1}(x) = x'$.
\end{enumerate}
\end{Lemme}

\begin{proof}
Considérons un homéomorphisme $h : D \rightarrow \mathbb D$ entre $D$ et le disque unité fermé $\mathbb D$ de $\R^2$.
Considérons ensuite l'homéomorphisme $k$ du disque unité ouvert $\mathring{\mathbb D}$ sur $\R^2$ défini par $k(z) = \tan\left(\frac\pi2 |z|\right) z$ pour tout $z \in \mathring{\mathbb D}$.
Enfin, pour tout $u \in \R^2$, notons $t_u : x \mapsto x + u$ la translation affine de $\R^2$ de vecteur $u$.
Pour tout $(x,x',s) \in \mathring D \times \mathring D \times [0,1]$, on définit $\phi^D_{x,x',s}$ par :
\[
 \begin{cases}
 (\phi^D_{x,x',s})_{| \mathring D} = h^{-1} \circ k^{-1} \circ 
 t_{s.(k\circ h(x')-k\circ h(x))} \circ k \circ h\\
 (\phi^D_{x,x',s})_{| M \setminus \mathring D} = \id_{M \setminus \mathring D}.
 \end{cases}
\]
L'application $(\phi^D_{x,x',s})_{| \mathring D}$ est un homéomorphisme de $\mathring D$ qui se prolonge par continuité, ainsi que sa réciproque, en l'identité sur le bord $\partial D$ de $D$.
On en déduit que l'on a bien défini un homéomorphisme $\phi^D_{x,x',s}$ de $M$.
Grâce à la continuité de $(x,x',s) \mapsto (\phi^D_{x,x',s})_{| \mathring D}$, on obtient l'assertion 1.
Les trois assertions suivantes sont faciles à vérifier.
Enfin, un calcul rapide conduit à $(\phi^D_{x,x',1})_{| \mathring D}(x) = x'$ qui n'est autre que la dernière assertion.
\end{proof}

\begin{Lemme} \label{Le:DisqueSuite}
 Soit $D_N$ un disque topologique fermé d'une surface $N$,
 $x$ et $x'$ deux points de $D_N$,
 $\gamma$ et $\gamma'$ deux chemins de l'intérieur de $D_N$ d'origines $x$ et d'extrémités $x'$.
 Il existe une isotopie $J = (G_t)_{t \in [0,1]}$ de l'identité de $D_N$ à elle-même vérifiant :
 \begin{enumerate}
  \item $J$ est à support dans $D_N$ ;
  \item pour tout $t \in [0,1]$, on a
    $G_t\circ\gamma(t) = \gamma'(t)$ ;
  \item $J$ est homotope à extrémités fixées à l'isotopie identité relativement au complémentaire de $D_N$.
 \end{enumerate}
\end{Lemme}

\begin{proof}
 On utilise le lemme précédent et pour tout $t \in [0,1]$, on définit $G_t$ par :
\[
 G_t = \phi^{D_N}_{\gamma(t),\gamma'(t),1}
\]
L'application $H$ définie par :
\[
 H(s,t) = \phi^{D_N}_{\gamma(t),\gamma'(t),s}
\]
réalise bien une homotopie entre l'isotopie identité $H(0,t)$ et $G_t = H(1,t)$, relativement au complémentaire de $D_N$.
\end{proof}

\begin{Rem}
 Dans le cas particulier où le lacet $\gamma = I(y)$ est contenu dans un disque fermé $D_N \subset N_X$, la proposition \ref{PropTh:XyNonEnlace} est un corollaire immédiat du lemme \ref{Le:DisqueSuite}.
 En effet, celui-ci assure l'existence d'une isotopie $J = (G_t)_{t \in [0,1]}$ de l'identité de $N_X$ à elle-même à support dans $D_N$ et vérifiant, pour tout $t \in [0,1]$, $G_t \circ \gamma(t) = y$.
 L'isotopie restreinte $(X,I')$ définie par $I' = J \circ I$ convient.
 
 Le principe de la démonstration qui suit est de se ramener à ce cas particulier.
\end{Rem}

\begin{proof}[Démonstration de la proposition \ref{PropTh:XyNonEnlace}]
Notons $(X,I) \in \mc I$ sous la forme $I = (F_t)_{t \in [0,1]}$.
Considérons le carré $K = [0,1]^2$ et un paramétrage $\theta_0 : [0,1] \to \partial K$ du bord de $K$ vérifiant $\theta_0(0) = \theta_0(1) = (0,0)$.
Soit $\varphi : \partial K \to N_X$
définie par $\varphi \circ \theta_0(t) = F_t(y)$ pour tout $t \in [0,1]$.
Comme $I(y)$ est un lacet, $\varphi$ est correctement définie et continue sur $\partial K$.
De plus le lacet $I(y)$ est contractile, donc $\varphi$ se prolonge en une application $\varphi : K \rightarrow N_X$ continue (nous conservons la notation $\varphi$ après prolongement).

Pour tout $n \in \N$ avec $n \geq 2$, on peut découper le carré $K$ en $n^2$ «~petits carrés~» de côté $\frac1n$ de la forme $\left[ \frac an, \frac{a+1}n\right] \times \left[ \frac bn, \frac{b+1}n\right]$ avec $a \in \ent0{n-1}$ et $b \in \ent0{n-1}$.
Comme $\varphi$ est continue sur le compact $K$, il existe un entier $n \geq 2$ tel que l'image par $\varphi$ de chacun des $n^2$ «~petits carrés~» soit contenue dans l'intérieur d'un disque topologique fermé de $M$.
On choisit désormais une telle valeur de $n$ et on note $L$ la réunion des $n^2$ disques de $M$ associés.

Nous allons maintenant construire une famille décroissante  $(K_i)_{0 \leq i \leq n^2}$ de sous-ensembles de $K$ telle que pour tout $i \in \ent0{n^2-1}$, $K_i$ soit la réunion, qui est simplement connexe, de $n^2-i$ «~petits carrés~» fermés, définie de la façon suivante :
\begin{itemize}
 \item on pose $K_0 = K$ ;
 \item soit $i \in \ent1{n^2-1}$ ; supposons $K_{i-1}$ construit comme réunion de $n^2-i+1$ «~petits carrés~».
 Parmi ces «~petits carrés~», on considère, parmi ceux contenant les points d'ordonnées maximales, celui qui contient les points d'abscisses maximales et on le note $C_i$.
 On note $K_i$ la réunion des $n^2-i$ «~petits carrés~» restants.
 \item en répétant ce procédé, on «~enlève les petits carrés~» un à un en commençant par les plus hauts et pour chaque ligne les plus à droite.
 On obtient nécessairement $K_{n^2-1} = \left[0,\frac1n\right] \times \left[0,\frac1n\right]$, seul «~petit carré~» contenant $\{(0,0)\}$, que l'on note $C_{n^2}$.
 \item on pose enfin $K_{n^2} = \{(0,0)\}$.
\end{itemize}
On construit de même une suite de paramétrages $(\theta_i)_{i \in [0,1]}$ avec $\theta_i : [0,1] \to \partial K_i$ du bord de $K_i$ de la façon suivante :
\begin{itemize}
 \item $\theta_0$ a déjà été construit ;
 \item soit $i \in \ent1{n^2-1}$ ; supposons $\theta_{i-1}$ construit.
 Il existe un segment $[a_i,b_i] \subset ]0,1[$ tel que la restriction $(\theta_{i-1})_{|[a_i,b_i]}$ soit à valeurs dans $ \partial C_i$ et $(\theta_{i-1})_{|[0,a_i[ \cup ]b_i,1]}$ soit à valeurs dans $\partial K_{i-1} \setminus \partial C_i$.
 On choisit $\theta_i : [0,1] \to \partial K_i$ de sorte que l'on ait $(\theta_{i-1})_{|[0,a_i[ \cup ]b_i,1]} = (\theta_i)_{|[0,a_i[ \cup ]b_i,1]}$ ;
 \item on pose enfin $\theta_{n^2}(t) = (0,0)$ pour tout $t \in [0,1]$.
\end{itemize}

Soit $i \in \ent1{n^2}$.
L'ensemble $\varphi(C_i)$ est contenu dans l'intérieur d'un disque topologique fermé $D_i$.
Considérons les chemins $t \mapsto \theta_{i-1}(t)$ et $t \mapsto \theta_i(t)$ avec $t \in [a_i,b_i]$.
Ces deux chemins ont même extrémité et même origine ; ils paramètrent chacun une partie de $\partial C_i$.
Utilisons le lemme \ref{Le:PPhi} et définissons l'isotopie $J^i = (G^i_t)_{t \in [0,1]}$ de l'identité de $N_X$ à elle-même par :
\[
 \forall t \in [0,1] \qquad
 G^i_t =
   \phi^{D_i}_{\varphi\circ\theta_{i-1}(t),\varphi\circ\theta_i(t),1}
\]
On obtient alors $G^i_t \circ \varphi \circ \theta_{i-1}(t) = \varphi \circ \theta_i(t)$ pour tout $t \in [0,1]$.
Par récurrence immédiate, on trouve :
\begin{align*}
 \forall t \in [0,1], \quad
 &G^{n^2}_t \circ G^{n^2-1}_t \circ \ldots \circ
 G^2_t \circ G^1_t \circ \varphi \circ \theta_0(t)
 = \varphi \circ \theta_{n^2}(t)\\
 \text{ou encore}\quad\forall t \in [0,1], \quad
 &G^{n^2}_t \circ G^{n^2-1}_t \circ \ldots \circ
 G^2_t \circ G^1_t \circ F_t(y) = y.
\end{align*}
Définissons l'isotopie restreinte $(X,I')$ en posant $I' = J^{n^2} \circ \ldots \circ J^1 \circ I$.
L'équation précédente assure que $I'$ fixe $y$ et
se met bien sous la forme $I' = J \circ I$ avec $J$ à support dans $L$.
\end{proof}

\bibliographystyle{persobst}
\bibliography{Enlacement}

\end{document}